\algrenewcommand\algorithmiccomment[1]{\hfill\textcolor{gray}{\textit{\# #1}}}
\numberwithin{equation}{section}
\newcommand{\vertiii}[1]{{\left\vert\kern-0.25ex\left\vert\kern-0.25ex\left\vert #1 
		\right\vert\kern-0.25ex\right\vert\kern-0.25ex\right\vert}}
\newcommand{\R}{\mathbb R}
\renewcommand\ae{{a.\@e.\@}}
\newcommand\qq{\qquad}
\newcommand{\dif}{\mathrm{d}}
\newcommand{\bu}{\mathbf u}
\renewcommand{\dif}{\operatorname{d}\!}
\newcommand{\lebe}{\operatorname{L}}
\newcommand{\sobo}{\operatorname{W}}
\newcommand{\locc}{\operatorname{loc}}
\newcommand{\hold}{\operatorname{C}}
\newcommand{\sg}{\varepsilon}
\newcommand\norm[1]{\left\lVert#1\right\rVert}
\newcommand{\bv}{\mathbf{v}}
\newcommand{\ball}{\operatorname{B}}
\newcommand{\mres}{\mathbin{\vrule height 1.6ex depth 0pt width
		0.13ex\vrule height 0.13ex depth 0pt width 1.3ex}}
\newcommand{\setR}{\mathbb{R}}
\renewcommand\qq\qquad
\theoremstyle{plain}
\newtheorem*{theorem*}{Theorem}
\newtheorem{theorem}{Theorem}[section]
\newtheorem{lemma}[theorem]{Lemma}
\newtheorem{proposition}[theorem]{Proposition}
\newtheorem{corollary}[theorem]{Corollary}
\newtheorem{assumption}[theorem]{Assumption}
\theoremstyle{remark}
\newtheorem{remark}[theorem]{Remark}
\newtheorem{example}[theorem]{Example}
\begin{document}
	\title[Korn inequalities with trace and capacity terms]{On Korn inequalities \\ with lower order trace terms} 
	\author[F. Gmeineder]{Franz Gmeineder}
    \address{F.G.: Department of Mathematics and Statistics, Universit\"{a}t Konstanz, Universit\"{a}tsstrasse 10, 78464 Konstanz, Germany}
\email{franz.gmeineder@uni-konstanz.de}	
    
    \author[E. S\"{u}li]{Endre S\"{u}li}
     \address{E.S.: Mathematical Institute, University of Oxford, Radcliffe Observatory Quarter, OX2 6HG, Oxford, United Kingdom.}
\email{suli@maths.ox.ac.uk}	
	\author[T. Tscherpel]{Tabea Tscherpel}
     \address{T.T.: Department of Mathematics, Technische Universit\"{a}t Darmstadt, Dolivostra\ss e 15, 64293 Darmstadt, Germany}
\email{tscherpel@mathematik.tu-darmstadt.de}	

\keywords{Korn inequalities, Coercivity, Traces, Capacity.}

\date{\today}

\subjclass{35A23,35E20,42B37,46-04}

	\maketitle
	\begin{abstract}
	We give an elementary estimate that entails and generalises numerous Korn inequalities scattered in the literature. As special instances, we obtain general Korn-type inequalities involving normal or tangential trace components, or lower dimensional trace integrals. 
	\end{abstract}
	\section{Introduction}
	Various problems from continuum and fluid mechanics require inequalities of \emph{Korn type}. 
    Such inequalities date back to {Korn}  \cite{Korn} (see also  {Friedrichs}  \cite{Friedrichs}) and, in their simplest form, allow to control full Sobolev norms by symmetric gradient Sobolev norms: If $1<p<\infty$ and $\Omega\subset\R^{n}$ is an open and bounded set with Lipschitz boundary $\partial\Omega$, then there exists a constant $c=c(n,p,\Omega)>0$ such that 
	\begin{align}\label{eq:Kornstart}
		\|\bu\|_{\sobo^{1,p}(\Omega)}\leq c\big(\|\bu\|_{\lebe^{p}(\Omega)} + \|\sg(\bu)\|_{\lebe^{p}(\Omega)} \big)\qquad\text{for all}\;\bu\in\sobo^{1,p}(\Omega;\R^{n}). 
	\end{align}
	Here, $\sg(\bu):=\frac{1}{2}(D\bu + (D\bu)^{\top})$ denotes the symmetric gradient, i.e., the symmetric part of the full gradient $D\bu$. Inequalities of this kind play an important role in various well-posedness results for partial differential equations and variational problems, and have been refined in various directions; see, e.g.,  \cite{BreitCianchiDiening,BreitDiening,Cianchi,Ciarlet1,Ciarlet2,Dain,DuvautLions,Fuchs1994,Gobert,Kalamajska,Necas,Smith} for an incomplete list, also including  applications in continuum and fluid mechanics or general relativity.  
	
	In numerous applications, weaker versions than inequalities~\eqref{eq:Kornstart} are required. For instance, if weak formulations of partial differential equations only ensure control of $\|\sg(\bu)\|_{\lebe^{p}(\Omega)}$ and come with Dirichlet, normal or tangential conditions on $\bu$ on a part $\Gamma\subset\partial\Omega$ of the boundary, a natural substitute of \eqref{eq:Kornstart} is an estimate of the form 
    \begin{align}\label{eq:Kornstart1}
		\|\bu\|_{\sobo^{1,p}(\Omega)}\leq c\big(\|T(\bu)\|_{\lebe^{p}(\Gamma)} + \|\varepsilon(\bu)\|_{\lebe^{p}(\Omega)}\big)\quad\text{for all}\;\bu\in\sobo^{1,p}(\Omega;\R^{n}). 
	\end{align}
	Dirichlet or partial trace conditions (such as hypotheses on the normal or tangential traces) then are reflected by the choices  $T(\bu)=\mathrm{tr}_{\partial\Omega}(\bu)$ with the boundary trace operator $\mathrm{tr}_{\partial\Omega}$, $T(\bu)=\mathrm{tr}_{\partial\Omega}(\bu)\cdot\nu_{\partial\Omega}$ or $T(\bu)=\mathrm{tr}_{\partial\Omega}(\bu)-(\mathrm{tr}_{\partial\Omega}(\bu)\cdot\nu_{\partial\Omega})\nu_{\partial\Omega}$; here $\nu_{\partial\Omega}$ denotes the outer unit normal to $\partial\Omega$. Variants of such estimates have been obtained by {Ryzhak} \cite{Ryzhak} and  {Desvillettes \& Villani} \cite{DesvilletesVillani} in the homogeneous case $T(\mathbf{u})=0$. 
{Versions of \eqref{eq:Kornstart1} with $\Gamma \subsetneq \partial \Omega$ naturally occur in the study of physical problems if periodic or natural boundary conditions are imposed on the remaining boundary $\partial \Omega \setminus \Gamma$.} We point out that homogeneous variants of \eqref{eq:KornStart}  are not sufficient to yield inequalities of the form \eqref{eq:Kornstart1}; see Remark \ref{rem:hominhom} below.

	There is also a number of models, however, that requires to control weaker expressions than $\sg(\bu)$. For instance, models that take into account torsions but no volumetric changes are  usually based on the trace-free gradient $\nabla^{D}\mathbf{u}\coloneqq \nabla\mathbf{u}-\frac{1}{n}\mathrm{div}(\mathbf{u})\mathbbm{1}_{n}$ or the trace-free symmetric gradient $\sg^{D}(\bu)=\sg(\bu)-\frac{1}{n}\mathrm{div}(\bu)\mathbbm{1}_{n}$ with the $(n\times n)$-unit matrix $\mathbbm{1}_{n}$. 
    Perhaps surprisingly, much less is known in this case; only in a more  recent contribution of {Bauer \& Pauly}  \cite{BauerPauly}, corresponding variants of \eqref{eq:Kornstart1} \emph{for homogeneous boundary conditions} have been addressed. { On the other hand, inhomogeneous cases are particularly important in the numerical analysis of PDEs. In particular, this concerns methods which approximate the respective PDEs and  treat the boundary conditions by means of penalisation \cite{GazcaEtAl,KaltenbachWichmann,Verfuerth}.} 

	In all cases, the available strategies of proof strongly hinge on the specific structure of the operators $\sg$ or $\sg^{D}$ \emph{in conjunction} with the specific boundary conditions. 
    This, in turn, motivates the study of inequalities \eqref{eq:Kornstart1} for a more flexible, yet natural class of differential operators $\mathbb{A}$ and, in some sense, the optimal generalisations of the terms $\|T(\bu)\|_{\lebe^{p}(\Gamma)}$ as in \eqref{eq:Kornstart1}. The present paper is precisely concerned with such generalisations.

More precisely, we assume that $\mathbb{A}$ is a linear, $k$-th order, homogeneous constant coefficient differential operator on $\R^{n}$ between the finite dimensional inner product spaces $V$ and $W$. 
This means that $\mathbb{A}$ has a representation
	\begin{align}\label{eq:form}
		\mathbb{A}=\sum_{|\alpha|=k}\mathbb{A}_{\alpha}\partial^{\alpha}\;\;\;\text{with corresponding Fourier symbol}\;\;\;\mathbb{A}[\xi] = \sum_{|\alpha|=k}\xi^{\alpha}\mathbb{A}_{\alpha}.
	\end{align}
	Here, for each {$\alpha \in \mathbb{N}_0^n$ with} $|\alpha|=k$, $\mathbb{A}_{\alpha}\colon V\to W$ is a fixed linear map. To state our main result, we require two more notions: We say that $\mathbb{A}$ given by \eqref{eq:form} is \emph{elliptic} if, for any $\xi\in\R^{n}\setminus\{0\}$, $\mathbb{A}[\xi]\colon V\to W$ is injective. If, for any $\xi\in\mathbb{C}^{n}\setminus\{0\}$, $\mathbb{A}[\xi]\colon V+\mathrm{i}V\to W+\mathrm{i}W$ remains injective, then $\mathbb{A}$ is called \emph{$\mathbb{C}$-elliptic} \cite{BreitDieningGmeineder,Kalamajska,Smith}. This framework includes, for instance, the (trace-free) symmetric gradients, see Example \ref{ex:diffops}  below. 
    Deferring the formal definition of $\sobo^{k}X$, which the reader might understand as a generalisation of $\sobo^{k,p}=\sobo^{k}\lebe^{p}$ for $X=\lebe^{p}$, we have: 
    
		\begin{theorem}\label{thm:korn-gen}
		Let $\Omega\subset \setR^n$ be open and bounded, and let $\mathbb{A}$ be a $k$-th order $\mathbb{C}$-elliptic differential operator of the form \eqref{eq:form}. Moreover, let $(X(\Omega),\|\cdot\|_{X(\Omega)})$ and let $(Y(\Omega),\|\cdot\|_{Y(\Omega)})$ be two infinite-dimensional Banach function spaces with $X(\Omega),Y(\Omega)\subset\lebe_{\locc}^{1}(\Omega;V)$. If 
	\begin{enumerate}[label=\emph{(\roman*)}]
		\item\label{item:Korn1A} the inequalities 
		\begin{align}\label{eq:kornetto0}
			\begin{split}
				\norm{\bu}_{\sobo^{k}Y(\Omega)} &\lesssim \norm{\bu}_{X(\Omega)} + \norm{\mathbb{A}\bu}_{X(\Omega)} \\
				\inf_{\bm{\rho}\in\ker(\mathbb{A};\R^{n})}\norm{\bu-\bm{\rho}}_{X(\Omega)} & \lesssim \|\mathbb{A}\bu\|_{X(\Omega)},
			\end{split}
		\end{align}
		hold for all $\bu\in\sobo^{k}X(\Omega;V)$, where
		\begin{align}\label{eq:nullspacemaintheorem}
			\ker(\mathbb{A};\R^{n}):= \{\bv\in\mathscr{D}'(\R^{n};V)\colon\;\mathbb{A}\bv=0\} = \{\mathbf{v}\colon\R^{n}\to V\;\text{polynomial}\colon\;\mathbb{A}\mathbf{v}=0\},
		\end{align}
		\end{enumerate}
        then 
        \begin{enumerate}[label=\emph{(i\roman*)}]
        \item\label{item:Korn1B} the following are equivalent for a seminorm $\vertiii{\cdot}$ on $\sobo^{k}X(\Omega;V)$: 
		\begin{enumerate}
			\item\label{item:Korn1} $\vertiii{\cdot}$ is a norm on $\ker(\mathbb{A};\R^{n})|_{\Omega}$. 
			\item\label{item:Korn2} We have validity of the \emph{Korn-type inequality}
			\begin{align}\label{eq:improvedKorn}
				\|\bu\|_{\sobo^{k}Y(\Omega)} \lesssim \vertiii{\bu} + \|\mathbb{A}\bu\|_{X(\Omega)}\qquad\text{for all}\;\bu\in\sobo^{k}X(\Omega). 
			\end{align}
			\item\label{item:Korn3} We have validity of the \emph{Korn-type inequality}
			\begin{align}\label{eq:improvedKorn1}
				\|\bu\|_{\sobo^{k}Y(\Omega)} \lesssim  \|\mathbb{A}\bu\|_{X(\Omega)}\qquad\text{for all}\;\bu\in\sobo^{k}X(\Omega)\;\text{with}\;\vertiii{\bu}=0. 
			\end{align}
		\end{enumerate}
		\end{enumerate} 
	\end{theorem}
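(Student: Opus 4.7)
The plan is a cyclic argument (b)$\Rightarrow$(c)$\Rightarrow$(a)$\Rightarrow$(b), with essentially all the content concentrated in the last implication. The implication (b)$\Rightarrow$(c) is the specialisation of (b) to those $\bu$ with $\vertiii{\bu}=0$. For (c)$\Rightarrow$(a), if $\bm{\rho}\in K:=\ker(\mathbb{A};\R^{n})|_{\Omega}$ satisfies $\vertiii{\bm{\rho}}=0$, then $\mathbb{A}\bm{\rho}=0$ and (c) gives $\|\bm{\rho}\|_{\sobo^{k}Y(\Omega)}=0$, so $\bm{\rho}=0$ and $\vertiii{\cdot}$ is genuinely a norm on $K$.

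The structural key for (a)$\Rightarrow$(b) is that $\mathbb{C}$-ellipticity forces the space $\ker(\mathbb{A};\R^{n})$ in~\eqref{eq:nullspacemaintheorem} to consist of $V$-valued polynomials of degree strictly less than $k$, so that $K$ is a \emph{finite-dimensional} subspace of $\sobo^{k}Y(\Omega;V)$. Under (a), $\vertiii{\cdot}|_{K}$ is a norm, and equivalence of norms on a finite-dimensional space furnishes a constant $c>0$ with $\|\bm{\rho}\|_{\sobo^{k}Y(\Omega)}\leq c\,\vertiii{\bm{\rho}}$ for all $\bm{\rho}\in K$.

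For arbitrary $\bu\in \sobo^{k}X(\Omega;V)$, the second inequality in~\eqref{eq:kornetto0} yields $\bm{\rho}_{\bu}\in K$ with $\|\bu-\bm{\rho}_{\bu}\|_{X(\Omega)}\lesssim\|\mathbb{A}\bu\|_{X(\Omega)}$. Applying the first inequality of~\eqref{eq:kornetto0} to $\bu-\bm{\rho}_{\bu}$ in place of $\bu$, and exploiting $\mathbb{A}\bm{\rho}_{\bu}=0$, upgrades this to $\|\bu-\bm{\rho}_{\bu}\|_{\sobo^{k}Y(\Omega)}\lesssim\|\mathbb{A}\bu\|_{X(\Omega)}$. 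A triangle inequality together with the finite-dimensional norm equivalence then yields
\begin{equation*}
\|\bu\|_{\sobo^{k}Y(\Omega)}\lesssim \|\mathbb{A}\bu\|_{X(\Omega)}+\vertiii{\bm{\rho}_{\bu}}\leq \|\mathbb{A}\bu\|_{X(\Omega)}+\vertiii{\bu}+\vertiii{\bu-\bm{\rho}_{\bu}}.
\end{equation*}

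The main obstacle is the remaining term $\vertiii{\bu-\bm{\rho}_{\bu}}$, which I expect to be handled by continuity of $\vertiii{\cdot}$ with respect to the $\sobo^{k}Y(\Omega)$-topology---a property that one should read into the statement that $\vertiii{\cdot}$ is a seminorm on $\sobo^{k}X(\Omega;V)$ and that is patently satisfied by all the intended applications (full and partial boundary traces, lower-dimensional trace integrals, capacity terms). With such continuity in force, $\vertiii{\bu-\bm{\rho}_{\bu}}\lesssim\|\bu-\bm{\rho}_{\bu}\|_{\sobo^{k}Y(\Omega)}\lesssim\|\mathbb{A}\bu\|_{X(\Omega)}$ closes the estimate. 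An equivalent contradiction route is also available: were (b) to fail, a sequence $(\bu_{j})$ with $\|\bu_{j}\|_{\sobo^{k}Y}=1$ and $\vertiii{\bu_{j}}+\|\mathbb{A}\bu_{j}\|_{X}\to 0$ would produce polynomial corrections $\bm{\rho}_{j}\in K$ that, by the finite-dimensionality of $K$ and the estimate $\|\bu_{j}-\bm{\rho}_{j}\|_{\sobo^{k}Y}\to 0$, converge subsequentially to some $\bm{\rho}_{*}\in K\setminus\{0\}$ with $\vertiii{\bm{\rho}_{*}}=0$, contradicting (a).
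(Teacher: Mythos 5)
Your proof mirrors the paper's direct argument almost step for step: for (a)$\Rightarrow$(b), you pick $\bm{\rho}_{\bu}$ realizing the Poincar\'e infimum in $\eqref{eq:kornetto0}_{2}$, apply Korn's first inequality $\eqref{eq:kornetto0}_{1}$ to $\bu-\bm{\rho}_{\bu}$, and trade $\|\bm{\rho}_{\bu}\|_{\sobo^{k}Y(\Omega)}$ for $\vertiii{\bm{\rho}_{\bu}}$ via equivalence of norms on the finite-dimensional null space. The only cosmetic difference is that the paper bounds $\|\bm{\rho}_{0}\|_{\sobo^{k}Y(\Omega)}$ by inserting a projection $\Pi$ onto the kernel and writing $\Pi\bu=\Pi(\bu-\bm{\rho}_{0})+\bm{\rho}_{0}$, whereas you apply the triangle inequality to $\vertiii{\bm{\rho}_{\bu}}$ directly; the latter is cleaner and the substance is identical. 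Your handling of (b)$\Rightarrow$(c) and (c)$\Rightarrow$(a) coincides with the paper's.

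Where your write-up is sharper than the paper's is in explicitly flagging the remaining term $\vertiii{\bu-\bm{\rho}_{\bu}}$ and noting that the argument only closes if $\vertiii{\cdot}$ is \emph{continuous} relative to $\|\cdot\|_{\sobo^{k}Y(\Omega)}$ on $\sobo^{k}X(\Omega;V)$. You are right that this is needed, and it is not a cosmetic point. The paper does not state the hypothesis; its own proof tacitly uses it when it passes from $\vertiii{\Pi(\bu)}$ to $\vertiii{\bu}$: boundedness of $\Pi$ from $(\sobo^{k}X,\|\cdot\|_{\sobo^{k}X})$ into the finite-dimensional kernel only delivers $\vertiii{\Pi(\bu)}\lesssim\|\bu\|_{\sobo^{k}X}$, not $\vertiii{\Pi(\bu)}\lesssim\vertiii{\bu}$; to reach $\vertiii{\bu}$ one must pass via $\bm{\rho}_{0}$ and is left precisely with the term you flagged. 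Without continuity the equivalence (a)$\Leftrightarrow$(b) genuinely fails: e.g.\ for $\mathbb{A}=\nabla$, $k=1$, $X=Y=\lebe^{2}$ on an interval, take $\vertiii{u}\coloneqq|L(u)|$ for an unbounded linear functional $L$ on $\sobo^{1,2}$ with $L(1)=1$; then (a) holds, but (c) fails, since there exist $u_{j}=c_{j}+v_{j}$ with $c_{j}=-L(v_{j})\to\infty$, $\|\nabla v_{j}\|_{\lebe^{2}}\to 0$, hence $\vertiii{u_{j}}=0$ while $\|u_{j}\|_{\sobo^{1,2}}\to\infty$. So the continuity you offered as a charitable reading is in fact a necessary additional hypothesis (automatically met in all of the paper's applications, as you note), and your proof is correct once it is in force.
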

    Theorem \ref{thm:korn-gen} comes with a natural variant, see Theorem \ref{thm:variant}. 
	This lets us retrieve several known Korn-type inequalities, e.g.,  \eqref{eq:Kornstart1} for several choices of $\Gamma$ and $\vertiii{\mathbf{u}}=\|T(\mathbf{u})\|_{\lebe^{p}(\Gamma)}$, but also opens the door to novel inequalities. For future reference, note that $\eqref{eq:kornetto0}_{1}$ is referred to as the \emph{first Korn inequality} or \emph{Korn inequality of the first kind}.
    
    The proofs of Theorem \ref{thm:korn-gen} and a variant thereof as given in Theorem \ref{thm:variant} below are by no means difficult, yet they give a unifying framework for a variety of coercive inequalities required in applications. There are several ways to establish Theorem \ref{thm:korn-gen}, and Section \ref{sec:proof} provides two proofs: one direct and one indirect one. 
 The first option is preferable, since it allows -- at least in principle -- to track the underlying  constants. The latter is essential, e.g., if the inequalities are applied in the context of numerical approximations and the original domains are approximated by a sequence of polyhedral ones. 
    While the right-hand side of the estimate is typically controlled by a priori estimates, this may provide uniform bounds on the sequence of approximate solutions. This, in turn, is key to establish  convergence results based on compactness arguments.
    
    The direct argument given in Section \ref{sec:directproof} can be used  to yield quantitative bounds on the underlying constants; see also Remark \ref{rem:explicitconstants}. Yet, we do not aim for optimal constants here. In this respect, the present paper differs, e.g., from \cite{BauerPauly,DesvilletesVillani}, where a considerable focus is put on the underlying sharp constants. Moreover, allowing for $X\neq Y$ in Theorem \ref{thm:korn-gen} is a natural requirement, e.g., when considering models that account for logarithmic hardening and are thus based on certain Orlicz spaces; see Section \ref{sec:otherscales}.

    The key direction '\emph{\ref{item:Korn1B}}\ref{item:Korn1}$\Rightarrow$\emph{\ref{item:Korn1B}}\ref{item:Korn2}' from Theorem \ref{thm:korn-gen} can equally be inferred from an abstract  result due to {Dra\v{z}i\'{c}} \cite{Drazic,Drazic1} (see also  Jovanovi\'{c} \& S\"{u}li \cite[Thm.~1.9]{JovanovicSuli}), in turn relying on a simple indirect argument to be revisited in Section \ref{sec:drazic} below. {Dra\v{z}i\'{c}}'s approach does not allow to track constants, but is of independent interest and seems to have gone widely unnoticed in the realm of Korn-type inequalities. However, Dra\v{z}i\'{c}'s Lemma \ref{lem:drazic} has natural limitations, and cannot be applied in situations where certain compact embeddings are not available. This especially pertains to scenarios where one aims to go beyond Korn inequalities. An instance thereof are inequalities on $\lebe^{1}$-based spaces: It is here where a trivial modification of the direct argument from Section \ref{sec:directproof} still applies, whereas the indirect argument from Section \ref{sec:drazic} does not; see Proposition \ref{prop:1} and the subsequent discussion for more details.

    Sections \ref{sec:choices0} and \ref{sec:choices} deal with particularly important special cases of Theorem \ref{thm:korn-gen} for several constellations of domains $\Omega$, differential operators $\mathbb{A}$, seminorms $\vertiii{\cdot}$ and function spaces $X,Y$. This may serve as a toolbox for various applications, and the key results concerning operators arising frequently in applications are summarised in Table \ref{table:InequalitiesSummary}. More precisely, Section \ref{sec:choices0} deals with seminorms defined in terms of Lebesgue integrals with respect to $\mathscr{L}^{n}$ or (partial) boundary trace conditions such as tangential or normal traces. Similar to \cite{BauerPauly1,DesvilletesVillani}, the validity of the corresponding Korn-type inequalities strongly depends on the interplay between the regularity and the geometry of the domains, the differential operators and the chosen seminorms. A similar principle applies to the situation where the seminorms are given by (interior) trace integrals with respect to lower dimensional measures $\mu$. For the latter to be meaningfully defined on Sobolev spaces, it is clear that they must meet certain capacitary criteria. Subject to such natural assumptions, the validity of the corresponding Korn-type inequalities is addressed in detail in Section \ref{sec:choices}. 
    
    In general, depending on the domains, operators and seminorms, it is difficult to \emph{classify} all potential constellations that lead to the corresponding Korn-type inequalities. In particular, this partially necessitates a switch between purely algebraic and analytic problems, see Example \ref{ex:devsymgradcomplications}. For such scenarios and in view of applications, Section \ref{sec:sym-comput} provides an algorithm including implementation to verify whether certain conditions required for the validity of Korn-type inequalities are met. 

    Lastly, we focus on inequalities which bound full Sobolev norms. It would be equally interesting to discuss this theme in view of incompatible scenarios \`a la Neff et al. \cite{GmeinederLewintanNeff1,GmeinederLewintanNeff2,Neff0,Neff00,Neff1} or nonlinear variants \`a la Ciarlet et al. \cite{Ciarlet1,Ciarlet3},  topics which we do not touch here.
\section*{Notation}
We briefly comment on notation and background terminology. Throughout, $\Omega\subset\R^{n}$ denotes an open set. We write $\mathscr{M}(\Omega)$ or $\mathscr{M}_{\geq 0}(\Omega)$ for the measurable functions with values in $[-\infty,\infty]$ or $[0,\infty]$, respectively. Even though we will not employ \ref{itm:BNF-1}--\ref{itm:BNF-5} from below explicitly, we collect the definition of Banach function spaces as appearing in Theorem \ref{thm:korn-gen} for the sake of completeness. Following Bennett and Sharpley \cite{BennettSharpley}, we call a map  $\rho\colon\mathscr{M}_{\geq 0}(\Omega)\to[0,\infty]$ a \emph{Banach function norm} if it satisfies the following for all $u,v,u_{1},u_{2},...\in\mathscr{M}_{\geq 0}(\Omega)$ and $\lambda\geq 0$:  
\begin{enumerate}[label = (B\arabic*)]
\item \label{itm:BNF-1} 
If $\rho(u)=0$, then $u=0$ $\mathscr{L}^{n}$-a.e.. Moreover,   $\rho(\lambda u)=\lambda \rho(u)$ and $\rho(u+v)\leq \rho(u)+\rho(v)$.
\item  \label{itm:BNF-2} 
If $u\leq v$ $\mathscr{L}^{n}$-a.e., then $\rho(u)\leq \rho(v)$. 
\item \label{itm:BNF-3} 
If $u_{i}\nearrow u$ $\mathscr{L}^{n}$\text{-a.e.~in $\Omega$}, then $\rho(u_{i})\nearrow \rho(u)$. 
\item \label{itm:BNF-4} 
If $A\subset\Omega$ is Lebesgue measurable and satisfies $\mu(A)<\infty$, then $\rho(\mathbbm{1}_{A})<\infty$. 
\item \label{itm:BNF-5} 
If $A\subset\Omega$ is Lebesgue measurable with $\mathscr{L}^{n}(A)<\infty$, then there exists $c=c(A,\rho)$ such that $\|u\|_{\lebe^{1}(A)}\leq c\rho(u)$. 
\end{enumerate}
For a Banach function norm $\rho$, the associated \emph{Banach function space} is given by 
\begin{align*}
X_{\rho}(\Omega)\coloneqq \{u\in\mathscr{M}(\Omega)\colon\;\|u\|_{X_{\rho}(\Omega)}\coloneqq \rho(|u|)<\infty\},
\end{align*}
and we also abbreviate  $X(\Omega)\coloneqq X_{\rho}(\Omega)$. For $k\in\mathbb{N}$, the associated Sobolev space is defined  by 
\begin{align*}
\sobo^{k}X(\Omega)\coloneq\Big\{u\colon\Omega\to\R\;\text{$k$-times weakly differentiable}\colon\;\|u\|_{\sobo^{k}X(\Omega)}\coloneqq\sum_{|\alpha|\leq k}\|\partial^{\alpha}u\|_{X_{\rho}(\Omega)}<\infty\Big\}. 
\end{align*}
For a finite-dimensional vector space $V$, the space $\sobo^{k}X(\Omega;V)$ of $V$-valued $\sobo^{k}X$-maps is defined analogously. As usual, $\mathscr{F}$ denotes the Fourier transform on the Schwartz functions or tempered distributions, respectively. We write $\mathcal{M}^{+}(\Omega)$ for the positive, finite Borel measures on $\Omega$, and $\mathcal{M}_{c}^{+}(\Omega)$ for the elements of $\mathcal{M}^{+}(\Omega)$ with compact support inside $\Omega$. Lastly, we write $A\lesssim B$ to express $A\leq cB$ with a constant $c>0$ independent of $B$.

\section{Proof of Theorem \ref{thm:korn-gen} and related inequalities}\label{sec:proof}

\subsection{Differential operators and examples} Throughout the paper, let $n\geq 2$. We start by collecting some background material on the differential operators from \eqref{eq:form}; see \cite{BreitDieningGmeineder,Kalamajska,Smith}. The importance of $\mathbb{C}$-ellipticity lies in the following result. 

\begin{lemma}\label{lem:Celliptic} For a differential operator $\mathbb{A}$ of the form \eqref{eq:form}, the following are equivalent:
\begin{enumerate}
	\item\label{item:Cell1} $\mathbb{A}$ is $\mathbb{C}$-elliptic. 
	\item\label{item:Cell2} For any open, bounded and connected $\Omega\subset\R^{n}$, $\ker(\mathbb{A};\Omega)\coloneqq\{\mathbf{u}\in\mathscr{D}'(\Omega;V)\colon\;\mathbb{A}\mathbf{u}=0\}$ is a finite-dimensional subspace of the $V$-valued polynomials on $\R^{n}$. 
    \item There exists an open and connected subset $\Omega\subset\R^{n}$ such that $\ker(\mathbb{A};\Omega)$ is a finite-dimensional subspace of the $V$-valued polynomials. 
\end{enumerate}
\end{lemma}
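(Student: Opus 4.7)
The plan is to establish the cycle $(1)\Rightarrow (2)\Rightarrow (3)\Rightarrow (1)$; the implication $(2)\Rightarrow (3)$ is immediate upon specialising $\Omega$ to any open ball in $\R^{n}$. The non-trivial content resides in the other two implications, with the heavier algebraic work in $(1)\Rightarrow (2)$.

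For $(1)\Rightarrow (2)$, the key step is algebraic and uses Hilbert's Nullstellensatz. $\mathbb{C}$-injectivity of $\mathbb{A}[\xi]$ on $\mathbb{C}^{n}\setminus\{0\}$ is equivalent to saying that the $(\dim V)\times(\dim V)$ minors of $\mathbb{A}[\xi]$ (viewed, after a choice of bases, as a $(\dim W)\times(\dim V)$ matrix over $\mathbb{C}[\xi_{1},\ldots,\xi_{n}]$) have no common zero outside the origin. The ideal $\mathcal{I}\subset\mathbb{C}[\xi_{1},\ldots,\xi_{n}]$ generated by these minors therefore has vanishing locus $\{0\}$, so by Hilbert's Nullstellensatz $\sqrt{\mathcal{I}}=(\xi_{1},\ldots,\xi_{n})$, yielding $N\in\mathbb{N}$ with $\xi_{i}^{N}\in\mathcal{I}$ for every $i\in\{1,\ldots,n\}$. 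Applying Cramer's rule to the induced representations as $\mathbb{C}[\xi]$-combinations of the generating minors then produces matrix polynomials $L_{1}(\xi),\ldots,L_{n}(\xi)$ satisfying $L_{i}(\xi)\mathbb{A}[\xi]=\xi_{i}^{N}\mathrm{Id}_{V}$. Under the symbol calculus of \eqref{eq:form}, this passes directly to the operator identity $L_{i}(\partial)\mathbb{A}=\partial_{i}^{N}\mathrm{Id}_{V}$, so every $\mathbf{u}\in\mathscr{D}'(\Omega;V)$ with $\mathbb{A}\mathbf{u}=0$ satisfies $\partial_{i}^{N}\mathbf{u}=0$ componentwise for each $i$. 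A standard mollification argument then shows that $\mathbf{u}$ coincides on every convex subset of $\Omega$ with a $V$-valued polynomial of total degree strictly less than $nN$, and connectedness of $\Omega$ glues these local polynomials into a single polynomial on $\R^{n}$ whose restriction to $\Omega$ equals $\mathbf{u}$. The uniform degree bound then forces $\ker(\mathbb{A};\Omega)$ to be finite-dimensional.

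For $(3)\Rightarrow (1)$, I would argue by contrapositive. If $\mathbb{A}$ fails to be $\mathbb{C}$-elliptic, pick $\xi_{0}\in\mathbb{C}^{n}\setminus\{0\}$ and $v_{0}\in(V+\mathrm{i}V)\setminus\{0\}$ with $\mathbb{A}[\xi_{0}]v_{0}=0$. Homogeneity of $\mathbb{A}[\cdot]$ of degree $k$ yields $\mathbb{A}[\lambda\xi_{0}]v_{0}=\lambda^{k}\mathbb{A}[\xi_{0}]v_{0}=0$ for every $\lambda\in\mathbb{R}$, so a direct symbol computation gives $\mathbb{A}(\mathrm{e}^{\mathrm{i}\lambda\xi_{0}\cdot x}v_{0})=0$ for all such $\lambda$. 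The real and imaginary parts of $\{\mathrm{e}^{\mathrm{i}\lambda\xi_{0}\cdot x}v_{0}\}_{\lambda\in\mathbb{R}}$ then yield an uncountable family of smooth, $\mathbb{R}$-linearly independent solutions of $\mathbb{A}\mathbf{u}=0$ on any open set $\Omega\subset\R^{n}$ (linear independence being immediate from the distinct exponential/oscillation rates when $\xi_{0}$ is decomposed into real and imaginary parts). This contradicts finite-dimensionality of $\ker(\mathbb{A};\Omega)$, completing the contrapositive.

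I expect the main obstacle to be the Nullstellensatz step in $(1)\Rightarrow (2)$, together with the subsequent deduction that distributional solutions of the scalar system $\partial_{i}^{N}\mathbf{u}=0$ ($i=1,\ldots,n$) are polynomials on the whole connected domain $\Omega$. The algebraic construction of the $L_{i}$ is classical but slightly intricate, and the local-to-global polynomial conclusion is precisely the point where the connectedness hypothesis in (2) and (3) is genuinely used.
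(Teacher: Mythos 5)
The paper states this lemma without proof, deferring to \cite{BreitDieningGmeineder,Kalamajska,Smith}. Your proposal reproduces the standard Nullstellensatz-based argument underlying those references — passing from $\mathbb{C}$-ellipticity to the operator identities $L_{i}(\partial)\mathbb{A}=\partial_{i}^{N}\mathrm{Id}_{V}$ via Hilbert's Nullstellensatz and Cramer's rule, then to polynomiality of kernel elements by mollification and gluing over the connected domain, and finally to the contrapositive of $(3)\Rightarrow(1)$ using complex-exponential solutions — and it is correct.
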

For future reference, we illustrate the previous lemma with an example. This shall be continued in Sections \ref{sec:choices0} and \ref{sec:choices}.
\begin{example}[$\mathbb{C}$-elliptic operators and their null  spaces]\label{ex:diffops} Throughout, we consider maps $\mathbf{u}\colon\R^{n}\to\R^{n}$ with $n\geq 2$. 
\begin{enumerate}
	\item\label{item:devgrad} \emph{Deviatoric gradient.} The operator $\nabla^{D}\mathbf{u}\coloneqq \mathbf{u} - \frac{1}{n}\mathrm{div}(\mathbf{u})\mathbbm{1}_{n}$ is $\mathbb{C}$-elliptic. Since this is lesser known, we briefly show this by computing its null space. 
    For $\mathbf{u}=(u_{1},...,u_{n})$, $\nabla^{D}\mathbf{u}=0$ means that $\partial_{j}u_{k}=0$ whenever $j\neq k$ and 
\begin{align}\tag{$I_{j}$}
	\frac{n-1}{n}\partial_{j}u_{j} = \frac{1}{n}\sum_{i\neq j}\partial_{i}u_{i}\qquad\text{for all}\;j\in\{1,...,n\}.
	\end{align}
	Subtracting $(I_{j})$ from $(I_{k})$ we obtain $\partial_{j}u_{j}=\partial_{k}u_{k}$ for all $j,k\in\{1,...,n\}$. Fix $j\in\{1,...,n\}$. Then $\partial_{k}u_{j}=0$ for $k\neq j$ yields that $u_{j}(x)=f_{j}(x_{j})$ and $\partial_{j}f_{j}(x_{j})=\partial_{k}f_{k}(x_{k})$. This implies that there exists $\alpha\in\R$ such that $\partial_{j}f_{j}(x_{j})=\alpha$. Hence $f_{j}(t)=\alpha t + b_j$. Therefore we have  
	$\mathbf{u}(x)= \alpha x + {b}$, and thus the kernel is given by 
    \begin{align}\label{def:ker-devgrad}
        \ker(\nabla^D) = \{x \mapsto \alpha x + {b} \colon \alpha \in \mathbb{R}, {b} \in \mathbb{R}^n\}. 
    \end{align}
	\item\label{item:symgrad} \emph{Symmetric gradient.} Following Reshetnyak \cite{Reshetnyak}, the null space of $\varepsilon$ on $\R^{n}$ is given by the rigid deformations 
	\begin{align}\label{def:ker:symgrad}
	 \ker(\varepsilon) = \mathscr{R}(\R^{n}) = \{x \mapsto Ax+{b}\colon\;A\in\R_{\mathrm{skew}}^{n\times n},\, {b}\in\R^{n}\}. 
	\end{align}
	\item  \label{item:devsymgrad}
    \emph{Deviatoric symmetric gradient.} Following Reshetnyak \cite{Reshetnyak}, provided that $n\geq 3$, the null space of $\varepsilon^{D}$ is given by the \emph{conformal Killing fields}
    \begin{equation}\label{def:ker-devsymgrad}
	\begin{aligned}
	\ker(\varepsilon^D) = \mathscr{K}(\R^{n})\coloneqq \big\{ x \mapsto 2 \left\langle {a},x \right\rangle x &- |x|^2 {a} +  Ax + \alpha x + {b} \colon \\ &  A\in\mathbb{R}_{\mathrm{skew}}^{n\times n},\; {a},{b}\in\R^{n},\; \alpha \in \R\big\},
	\end{aligned}
    \end{equation}
    see also Dain~\cite{Dain}. 
    As discussed, e.g., in \cite[Ex.~2.2]{BreitDieningGmeineder}, the null space of $\varepsilon^{D}$ contains a copy of the holomorphic functions on $\mathbb{C}$. Hence, for $n=2$, the null space of  $\varepsilon^{D}$ is not finite-dimensional and thus $\mathbb{C}$ is not $\mathbb{C}$-elliptic. However, in the terminology given prior to Theorem \ref{thm:korn-gen}, $\sg^{D}$ is elliptic in two dimensions. 
\end{enumerate}
\end{example}

\subsection{A direct proof of Theorem \ref{thm:korn-gen}}\label{sec:directproof}
	We now come to the proof of the main result of the present paper; we shall first give a self-contained proof. 

	\begin{proof}[Proof of Theorem \ref{thm:korn-gen}]
Suppose that \emph{\ref{item:Korn1A}} holds. On '\ref{item:Korn1}$\Rightarrow$\ref{item:Korn2}'. By Lemma \ref{lem:Celliptic}, the null space $\ker(\mathbb{A};\R^{n})$ as in \eqref{eq:nullspacemaintheorem} is a finite-dimensional space of $V$-valued polynomials.  Hence, for each $\bu\in\sobo^{k}X(\Omega;V)$, the infimum on the left-hand side of   $\eqref{eq:kornetto0}_{2}$ is attained for some $\bm{\rho}_{0}\in\ker(\mathbb{A};\R^{n})$. By slight abuse of notation, we write $\bm{\rho}_{0}$ for its restriction to $\Omega$, whereby $\bm{\rho}_{0}\in\sobo^{k}X(\Omega;V)$. 
       Applying~\eqref{eq:kornetto0} yields 
		\begin{align}
			\|\bu\|_{\sobo^{k}Y(\Omega)} & \leq \|\bu-\bm{\rho}_{0}\|_{\sobo^{k}Y(\Omega)} + \|\bm{\rho}_{0}\|_{\sobo^{k}Y(\Omega)} \notag\\ 
			& \!\!\!\!\stackrel{\eqref{eq:kornetto0}_{1}}{\lesssim} \|\bu-\bm{\rho}_{0}\|_{X(\Omega)} +  \|\mathbb{A}\bu\|_{X(\Omega)} + \|\bm{\rho}_{0}\|_{\sobo^{k}Y(\Omega)} \label{eq:intermediate1}\\ 
			& \!\!\!\!\stackrel{\eqref{eq:kornetto0}_{2}}{\lesssim}  \|\mathbb{A}\bu\|_{X(\Omega)} + \|\bm{\rho}_{0}\|_{\sobo^{k}Y(\Omega)} \eqqcolon A + B. \notag
		\end{align}
		The term $A$ is already in the desired form. For $B$, we note that all norms on finite-dimensional spaces are equivalent. Since $\vertiii{\cdot}$ is a norm on $\ker(\mathbb{A};\R^{n})|_{\Omega}$ by \ref{item:Korn1}, this particularly entails that 
		\begin{align}\label{eq:intermediate2}
			\|\pi\|_{\sobo^{k}Y(\Omega)} \lesssim \vertiii{\pi}\qquad\text{for all}\;\pi\in\ker(\mathbb{A};\R^{n})|_{\Omega}. 
		\end{align}
        Let $\Pi\colon\sobo^{k}X(\Omega;V)\to\ker(\mathbb{A};\R^{n})|_{\Omega}$ be an arbitrary but fixed projection onto $\ker(\mathbb{A};\R^{n})|_{\Omega}$. Then we have with \eqref{eq:intermediate2} that
		\begin{align*}
			B & = \|\bm{\rho}_{0}\|_{\sobo^{k}Y(\Omega)} = \|\Pi(\bm{\rho}_{0})\|_{\sobo^{k}Y(\Omega)} \\ &  \leq  \|\Pi(\bu-\bm{\rho}_{0})\|_{\sobo^{k}Y(\Omega)} + \|\Pi(\bu)\|_{\sobo^{k}Y(\Omega)}\\
            & \lesssim  \|\Pi(\bu-\bm{\rho}_{0})\|_{\sobo^{k}Y(\Omega)} + \vertiii{\Pi(\bu)}.
		\end{align*}
        Because of $\dim(\ker(\mathbb{A};\R^{n})|_{\Omega})<\infty$, the operator $\Pi\colon\sobo^{k}X(\Omega)\to\ker(\mathbb{A};\R^{n})|_{\Omega}$ is a bounded linear operator regardless of the norm with which $\ker(\mathbb{A};\Omega)$ is endowed. In conclusion, 
        \begin{align*}
			B  \lesssim \|\bu-\bm{\rho}_{0}\|_{\sobo^{k}X(\Omega)} + \vertiii{\bu} &  \stackrel{\eqref{eq:kornetto0}_{2}}{\lesssim}  \|\mathbb{A}\bu\|_{X(\Omega)} + \vertiii{\bu},
		\end{align*}
	and hence~\eqref{eq:improvedKorn}  follows	in view of \eqref{eq:intermediate1}.
		
		The direction `\ref{item:Korn2}$\Rightarrow$\ref{item:Korn3}' is immediate. On '\ref{item:Korn3}$\Rightarrow$\ref{item:Korn1}'. Let $\bu\in\ker(\mathbb{A};\Omega)$ satisfy $\vertiii{\bu}=0$. Then \eqref{eq:improvedKorn1} gives us $\|\bu\|_{\sobo^{k}X(\Omega)}=0$, whereby $\bu=0$. Since $\vertiii{\cdot}$ is a priori assumed to be a seminorm on $\sobo^{k}X(\Omega;V)$, this implies \ref{item:Korn1}, and the proof is complete. 
	\end{proof}
	
	\begin{remark}[On the assumptions of Theorem \ref{thm:korn-gen}]
		Even though it might seem at a first glance that Theorem \ref{thm:korn-gen} holds  without any conditions on the boundary $\partial\Omega$, this is not so: Indeed,  the validity of \eqref{eq:kornetto0} rules out certain domains. For instance, if $X=\lebe^{p}$ with $1<p<\infty$ and $\mathbb{A}=\varepsilon$ is the symmetric gradient, an almost optimal condition on $\Omega$ for $\eqref{eq:kornetto0}_{1}$ to hold is to be John, see \cite{Kauranen}. More generally, and as will be explained in Section \ref{sec:validityfirst} below, inequalities \eqref{eq:kornetto0} hold true for all John domains and all reasonable choices of $X$, see \cite{DieningGmeineder,DieningRuzickaSchumacher}.
	\end{remark}
	\begin{remark}[Choices of $X$ and $Y$]
		Inequalities \eqref{eq:kornetto0}, which represent the main hypotheses for Theorem \ref{thm:korn-gen}, are satisfied for basically all reasonable choices of function spaces $X,Y$ and domains $\Omega\subset\R^{n}$. In the framework of the classical Korn inequalities, we have $X=Y=\lebe^{p}$ for some $1<p<\infty$. However, as explained in detail by Cianchi \cite{Cianchi} in the framework of Orlicz spaces $\lebe^{\varphi}$, it is often useful to include different choices of $X$ and $Y$. For instance, Korn inequalities involving Young functions $\varphi$ of $\lebe\log^{1+\alpha}\lebe$-type usually come with the loss of one logarithm; see Section~\ref{sec:otherscales}  for more detail.
	\end{remark}

\subsection{{Dra\v{z}i\'{c}'s lemma and an indirect proof of Theorem \ref{thm:korn-gen}}}\label{sec:drazic}
	The following lemma is a special case of a result due to Dra\v{z}i\'{c} \cite{Drazic,Drazic1} and is established as an abstract variant of Lions' or Ne\v{c}as' classical approach to coercive inequalities \cite{DuvautLions,Necas}. 
    
	\begin{lemma}[Dra\v{z}i\'{c}]\label{lem:drazic}
	Let $(\mathbb{X}_{0},\|\cdot\|_{0})$, $(\mathbb{X}_{1}, \|\cdot\|_{1})$, for $i \in \{1,2\}$, be two Banach spaces such that we have the compact embedding  $\mathbb{X}_{1}\hookrightarrow\hookrightarrow\mathbb{X}_{0}$. Moreover, let $S_{i}\colon\mathbb{X}_{i}\to\R_{\geq 0}$ be two bounded sublinear functionals such that there exists a constant $c>0$ with 
	\begin{align}\label{eq:KornStart}
	\|x\|_{1}\leq c\,(S_{0}(x) + S_{1}(x))\qquad\text{for all}\;x\in\mathbb{X}_{1}. 
	\end{align}
Then there exists a constant $C>0$ such that 
\begin{align}\label{eq:Drazic1}
\inf_{z\in\ker(S_{1})}\|x-z\|_{1}\leq C S_{1}(x)\qquad\text{for all}\;x\in\mathbb{X}_{1}. 
\end{align}
In particular, if $\ker(S_{1})=\{0\}$, then there exists a constant $C>0$ such that 
		\begin{align}\label{eq:KornConclude}
		\|x\|_{1}\leq C\,S_{1}(x)\qquad\text{for all}\;x\in\mathbb{X}_{1}. 
	\end{align}
	\end{lemma}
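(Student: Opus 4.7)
My plan is to argue by contradiction in the style of Ne\v{c}as--Lions, using \eqref{eq:KornStart} together with the compact embedding $\mathbb{X}_1 \hookrightarrow\hookrightarrow \mathbb{X}_0$ as a Peetre-type lemma. First I would establish structural properties of $\ker(S_1)$. Since $S_1$ is continuous, $\ker(S_1)$ is closed, and the sublinearity (understood in the seminorm sense, as in Dra\v{z}i\'{c}'s setting) makes it a linear subspace. On this subspace, \eqref{eq:KornStart} gives $\|x\|_1 \leq c\, S_0(x) \lesssim \|x\|_0$, so the two norms are equivalent there; combined with the compact embedding, this forces $\dim \ker(S_1) < \infty$, and consequently the infimum in \eqref{eq:Drazic1} is attained for every $x \in \mathbb{X}_1$.

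Next, I would suppose that \eqref{eq:Drazic1} fails and pick a sequence $(x_n) \subset \mathbb{X}_1$ such that $\inf_{z \in \ker(S_1)} \|x_n - z\|_1 > n\, S_1(x_n)$ for every $n \in \mathbb{N}$. Choosing $z_n \in \ker(S_1)$ nearly attaining the infimum, I renormalise by setting $y_n := (x_n - z_n)/\|x_n - z_n\|_1$. Invariance of $S_1$ along $\ker(S_1)$ together with positive homogeneity then yields $\|y_n\|_1 = 1$, $S_1(y_n) \to 0$, and, crucially, the lower bound $\inf_{z \in \ker(S_1)} \|y_n - z\|_1 \geq \tfrac{1}{2}$.

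The compact embedding now extracts a subsequence $y_{n_k}$ converging in $\mathbb{X}_0$, hence Cauchy there, and I would upgrade this to a Cauchy sequence in $\mathbb{X}_1$ by applying \eqref{eq:KornStart} to the differences $y_{n_k} - y_{n_\ell}$: boundedness of $S_0$ gives $S_0(y_{n_k} - y_{n_\ell}) \lesssim \|y_{n_k} - y_{n_\ell}\|_0 \to 0$, while sublinearity of $S_1$ yields $S_1(y_{n_k} - y_{n_\ell}) \to 0$. The limit $y^* \in \mathbb{X}_1$ then satisfies $\|y^*\|_1 = 1$ and, by continuity, $S_1(y^*) = 0$, so $y^* \in \ker(S_1)$. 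Choosing $z = y^*$ in the lower bound from the previous paragraph and passing to the limit yields $0 \geq \tfrac{1}{2}$, the desired contradiction. The special case $\ker(S_1) = \{0\}$ is then immediate, as \eqref{eq:Drazic1} reduces to \eqref{eq:KornConclude}.

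The most delicate point I anticipate is the invariance $S_1(x-z) = S_1(x)$ for $z \in \ker(S_1)$, which underpins both the renormalisation and the Cauchy upgrade, and which is not formally guaranteed by mere sublinearity and nonnegativity. In Dra\v{z}i\'{c}'s formulation this is evidently built into the intended meaning of ``bounded sublinear functional''; an alternative is to perform the whole argument on the Banach quotient $\mathbb{X}_1/\ker(S_1)$ equipped with the induced functional, where the required translation invariance becomes tautological and the finite-dimensionality of $\ker(S_1)$ ensures the quotient is again Banach.
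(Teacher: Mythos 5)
Your proof is correct, and your core strategy coincides with the paper's: an indirect argument that extracts an $\mathbb{X}_0$-Cauchy sequence by compactness and upgrades it to an $\mathbb{X}_1$-Cauchy sequence via \eqref{eq:KornStart}. The main difference is scope: the paper explicitly proves only the special case \eqref{eq:KornConclude} where $\ker(S_1)=\{0\}$, stating that this is the inequality of primary interest, whereas you prove the general quotient estimate \eqref{eq:Drazic1}. Your extra steps --- establishing $\dim\ker(S_1)<\infty$ via the Peetre--Tartar mechanism, nearly-optimal renormalisation $y_n=(x_n-z_n)/\|x_n-z_n\|_1$ to keep the quotient distance bounded below, and the translation invariance $S_1(x-z)=S_1(x)$ along the kernel --- are exactly what the general case needs, and you handle them correctly. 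You are also right to flag that the proof (yours and the paper's alike) tacitly treats $S_1$ as a seminorm: the paper's own chain uses $S_1(x_j-x_l)\le S_1(x_j)+S_1(x_l)$, which requires $S_1(-x)=S_1(x)$ and is not a formal consequence of positive sublinearity alone; your suggested remedy of passing to the quotient $\mathbb{X}_1/\ker(S_1)$ is a clean way to make this rigorous. In short: same underlying idea, but you prove the stronger conclusion and surface a subtlety the paper leaves implicit.
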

 Inequality \eqref{eq:KornConclude} shall be most important to us. For the reader's convenience, we thus pause to give a self-contained proof.
 \begin{proof}[Proof of \eqref{eq:KornConclude}]
	Suppose that \eqref{eq:KornConclude} does not hold. Then there exists a sequence $(x_{j})\subset\mathbb{X}_{1}$ such that $\|x_{j}\|_{1}=1$ for all $j\in\mathbb{N}$ and $S_{1}(x_{j})\to 0$ as $j\to\infty$. Because of $\mathbb{X}_{1}\hookrightarrow\hookrightarrow\mathbb{X}_{0}$, there exists a (non-relabelled) subsequence and some $x\in \mathbb{X}_{0}$ such that $x_{j}\to x$ strongly in $\mathbb{X}_{0}$. 
    Since $(x_{j})$ is Cauchy in $\mathbb{X}_{0}$, by \eqref{eq:KornStart} we have: 
	\begin{align*}
\limsup_{j,l\to\infty}	\|x_{j}-x_{l}\|_{1} & \leq c\,\limsup_{j,l\to\infty}(\|S_{0}\|\,\|x_{j}-x_{l}\| + S_{1}(x_{j})+S_{1}(x_{l})) \\ & \leq c\|S_{0}\|\limsup_{j,l\to\infty} \|x_{j}-x_{l}\|_{0} = 0. 
	\end{align*}
	Hence, $(x_{j})$ is Cauchy in $(\mathbb{X}_{1},\|\cdot\|_{1})$ and, by the Banach space property of $\mathbb{X}_{1}$, converges to some $\widetilde{x}\in\mathbb{X}_{1}$; therefore, one has $x=\widetilde{x}$. Since $\|x_{j}\|_{1}=1$ for all $j\in\mathbb{N}$, it follows that $\|\widetilde{x}\|_{1}=1$. On the other hand, 
	\begin{align*}
|S_{1}(x)| =	|S_{1}(\widetilde{x})|\leq |S_{1}(\widetilde{x}-x_{j})| + |S_{1}(x_{j})| \leq \|S_{1}\|\,\|\widetilde{x}-x_{j}\|_{1} + |S_{1}(x_{j})| \to 0 
	\end{align*}
	as $j\to\infty$. In conclusion, $x\in\ker(S_{1})=\{0\}$, which is at variance with $\|x\|_{1}=1$. This completes the proof. 
	\end{proof}
Applying Lemma \ref{lem:drazic}, we can give an 
	\begin{proof}[Indirect proof of Theorem~\ref{thm:korn-gen}] We adopt the notation from Lemma \ref{lem:drazic}. In the framework of Lemma~\ref{lem:drazic}, we put $\mathbb{X}_{1}=\sobo^{k}Y(\Omega;V)$,  $\mathbb{X}_{0}\coloneqq\sobo^{k-1}X(\Omega;V)$,  
		\begin{align*}
		S_{0}(\mathbf{u})\coloneqq\|\mathbf{u}\|_{\mathbb{X}_{0}}\;\;\;\text{and}\;\;\;S_{1}(\mathbf{u})\coloneqq\vertiii{\mathbf{u}}+\|\mathbb{A}\mathbf{u}\|_{X(\Omega)}. 
		\end{align*}
		Clearly, if \eqref{eq:kornetto0} holds, then we have \eqref{eq:KornStart}.
	If $\vertiii{\cdot}$ is a norm on $\ker(\mathbb{A};\Omega)$, then $\ker(S_{1})=\{0\}$. In consequence, \eqref{eq:improvedKorn} follows from \eqref{eq:KornConclude}, completing the proof. 
	\end{proof}
For future reference, we discuss the necessity of the hypotheses underlying Dra\v{z}i\'{c}'s Lemma~\ref{lem:drazic}. By this, we shall find its natural limitations in view of applications. Apart from completeness, we note that the compact embedding hypothesis is essential and  cannot be omitted:
\begin{example}\label{ex:MazyaDrazic}
Let $\Omega\subset\R^{2}$ be Maz'ya's 'shrinking rooms and passages domain', see Fig.~\ref{fig:mazya}. Put 
\begin{align*}
\mathrm{V}^{2,p}(\Omega)\coloneqq\{u\in\lebe^{p}(\Omega)\colon\;\nabla^{2}u\in\lebe^{p}(\Omega;\R^{2\times 2})\}
\end{align*}
and endow it with its canonical norm $\|u\|_{\mathrm{V}^{2,p}(\Omega)}\coloneqq\|u\|_{\lebe^{p}(\Omega)}+\|\nabla^{2}u\|_{\lebe^{p}(\Omega)}$. We define $\mathbb{X}_{0} \coloneqq \sobo^{1,p}(\Omega)$ and $\mathbb{X}_{1}\coloneqq\sobo^{2,p}(\Omega)$, so that $\mathbb{X}_{1}\hookrightarrow\mathbb{X}_{0}$. 
{However, as a byproduct of the following we shall see that the embedding is not compact.}  We now choose  $S_{0}(u) \coloneqq \|u\|_{\sobo^{1,p}(\Omega)}$ and $S_{1}(u)\coloneqq \|u\|_{\mathrm{V}^{2,p}(\Omega)}$, so that $S_{i}\colon\mathbb{X}_{i}\to[0,\infty)$ are bounded norms for $i\in\{1,2\}$ {and $\ker(S_1) = \{0\}$}. 
Clearly, one has that 
\begin{align*}
\|u\|_{\sobo^{2,p}(\Omega)} \leq S_{0}(u)+S_{1}(u)\qquad\text{for all}\;u\in\sobo^{2,p}(\Omega),  
\end{align*}
which means that \eqref{eq:KornStart} is satisfied. 
However, one can show that there is no constant $C>0$ such that 
\begin{align}\label{eq:contra}
\|u\|_{\sobo^{2,p}(\Omega)}\leq CS_{1}(u)=C\|u\|_{\mathrm{V}^{2,p}(\Omega)}\qquad\text{for all}\;u\in\sobo^{2,p}(\Omega),
\end{align}
i.e., \eqref{eq:KornConclude} does not hold. 
Indeed, suppose that \eqref{eq:contra} holds. 
Based on the construction indicated in Figure \ref{fig:mazya}, for $N \in \mathbb{N}$ we define 
\begin{align*}
u_{N}(x_{1},x_{2})\coloneqq\sum_{j=1}^{N}\left(\mathbbm{1}_{P_{j}}(x_{1},x_{2})(2^{j+1}(x_{2}-1)-1) + \mathbbm{1}_{R_{j}}(x_{1},x_{2})(2^{j}(x_{2}-1)^{2})\right).
\end{align*}
As established by Maz'ya in \cite[Chpt.~1.1.4, Ex.~2]{Mazya}, $\sup_{N\in\mathbb{N}}\|u_{N}\|_{\mathrm{V}^{2,p}(\Omega)}<\infty$. On the other hand, $u_{N}\in\sobo^{2,p}(\Omega)$ for each $N\in\mathbb{N}$ and $\|u_{N}\|_{\sobo^{2,p}(\Omega)}\to\infty$ as $N\to\infty$. In view of \eqref{eq:contra}, this is the desired contradiction. Here, the key point is that $\mathbb{X}_{1}$ does not compactly embed into $\mathbb{X}_{0}$, which can also be checked by elementary means. 
\end{example}
\begin{figure}
\begin{tikzpicture}
\draw[-] (-0.1,0.25) -- (4,0.25); 
\draw[-] (-0.1,0.25) -- (-0.1,4);
\draw[-] (-0,1.5) -- (-0,0.5) -- (3.5,0.5) -- (3.5,1.5) -- (3,1.5) -- (3,2) -- (3.25,2) -- (3.25,3.5) -- (2.25,3.5) -- (2.25,2) -- (2.5,2) -- (2.5,1.5) -- (1.75,1.5) -- (1.75,1.75) -- (1.875,1.75) -- (1.875,2.5) -- (1.375,2.5)--(1.375,1.75) -- (1.5,1.75) -- (1.5,1.5) -- (1,1.5) -- (1,1.625) -- (1.065,1.625) -- (1.065,2) -- (0.825,2) -- (0.825,1.625) -- (0.89,1.625) -- (0.89,1.5) -- (0.75,1.5);
\draw[dotted] (0.75,1.5) -- (0,1.5);
\draw[<->] (3.25,3.75) -- (2.25,3.75);
\node[above] at (2.75,3.75){\tiny $2^{-j}$};
\draw [<->] (3.5,2) -- (3.5,3.5);
\node[right] at (3.5,2.75) {\tiny $2^{-j/2}$};
\draw[<->] (3.5,2) -- (3.5,1.5);
\node[right] at (3.5,1.75) {\tiny $2^{-j}$};
\node at (2.75,2.5) {$R_{j}$};
\node at (2.75,1.75) {\tiny $P_{j}$};
\node at (1.75,1) { $H$};
\end{tikzpicture}
\caption{Sketch of Maz'ya's shrinking rooms and passages domain from \cite[Chpt.~1.1.4, Fig.~3]{Mazya} (not to scale)}
\label{fig:mazya}
\end{figure}
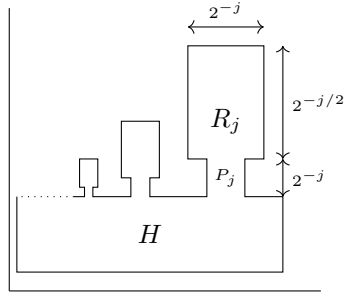

We now briefly discuss the impact of Dra\v{z}i\'{c}'s Lemma \ref{lem:drazic}. Starting with an inequality \eqref{eq:KornStart}, it improves the right-hand side of \eqref{eq:KornStart} to only contain the \emph{minimal} part $S_{1}(x)$ instead of $S_{0}(x)+S_{1}(x)$. In this sense, it reduces the amount of norms on the right-hand side needed to bound $\|x\|_{1}$. As shown by Example \ref{ex:MazyaDrazic}, if the embedding $\mathbb{X}_{1}\hookrightarrow\mathbb{X}_{0}$ is not compact, then Lemma \ref{lem:drazic} does not allow for this reduction. It is here where stronger additional statements need to be employed \emph{in conjunction with} an adaptation of the direct proof of Theorem \ref{thm:korn-gen} afterwards. However, we shall see below that a modification of the direct proof above \emph{directly} yields this conclusion. We illustrate this point in the following example. 

\begin{example}\label{ex:drazicredux}
Let $\Omega\subset\R^{n}$ be open and bounded with Lipschitz boundary, and let $\mathbb{A}$ be a first order $\mathbb{C}$-elliptic operator of the form \eqref{eq:form}. For $1\leq p<n$, we consider the Sobolev-type  inequality 
\begin{align}\label{eq:SobolevIntro}
\|\mathbf{u}\|_{\lebe^{\frac{np}{n-p}}(\Omega)}\leq c(\|\mathbf{u}\|_{\lebe^{p}(\Omega)}+\|\mathbb{A}\mathbf{u}\|_{\lebe^{p}(\Omega)})\qquad\text{for}\;\mathbf{u}\in\sobo^{1,p}(\Omega;V).
\end{align} 
We wish to replace $\|\mathbf{u}\|_{\lebe^{p}(\Omega)}$ by a norm $\vertiii{\cdot}$ on $\ker(\mathbb{A};\Omega)$. In the framework of Lemma \ref{lem:drazic}, we are bound to put $\mathbb{X}_{1}=\lebe^{\frac{np}{n-p}}(\Omega;V)$,  $\mathbb{X}_{0}=\lebe^{p}(\Omega;V)$ and $S_{0}(\mathbf{u})\coloneqq\|\mathbf{u}\|_{\lebe^{p}(\Omega)}$. However, neither is $S_{1}(\mathbf{u})\coloneqq\vertiii{\mathbf{u}}+\|\mathbb{A}\mathbf{u}\|_{\lebe^{p}(\Omega)}$ a sublinear functional on the entire $\mathbb{X}_{1}$ nor is the embedding $\mathbb{X}_{1}\hookrightarrow\mathbb{X}_{0}$ compact. In this case, {for $p>1$} the desired inequality can be obtained as follows: We first employ the stronger Korn-type inequality (see, e.g.,~\cite{DieningGmeineder,Kalamajska,Smith})
\begin{align}\label{eq:SobolevIntro1}
\|\mathbf{u}\|_{\sobo^{1,p}(\Omega)}\leq c(\|\mathbf{u}\|_{\lebe^{p}(\Omega)}+\|\mathbb{A}\mathbf{u}\|_{\lebe^{p}(\Omega)})\qquad\text{for}\;\mathbf{u}\in\sobo^{1,p}(\Omega;V), 
\end{align}
and subsequently we use Dra\v{z}i\'{c}'s Lemma \ref{lem:drazic} as above in conjunction with the Sobolev embedding $\sobo^{1,p}(\Omega;V)\hookrightarrow\lebe^{\frac{np}{n-p}}(\Omega;V)$. In conclusion, \eqref{eq:SobolevIntro} \emph{in itself} does not yield the desired reduction. 
\end{example}
In the preceding example, the key point is that we are interested in estimating lower order quantities. There are related scenarios where additional, stronger inequalities (such as \eqref{eq:SobolevIntro1} in the preceding example) are \emph{not} available; see Proposition \ref{prop:1} below for such an instance. 
\subsection{Variations of Theorem \ref{thm:korn-gen}}
We now consider stronger assumptions than in Theorem \ref{thm:korn-gen}. This particularly concerns the case $X=Y$ subject to the natural compact embedding hypothesis $\sobo^{k}X(\Omega)\hookrightarrow\hookrightarrow X(\Omega)$, which implies the following strengthening. 
\begin{theorem}\label{thm:variant}
In the situation of Theorem \ref{thm:korn-gen}, let $X=Y$ and suppose that $\sobo^{k}X(\Omega)\hookrightarrow\hookrightarrow X(\Omega)$. Then the following statements are \emph{equivalent}: 
\begin{enumerate}[label=\emph{(\roman*')}]
		\item\label{item:Korn1A1} We have 
		\begin{align}\label{eq:kornetto0A1}
			\begin{split}
				\norm{\bu}_{\sobo^{k}X(\Omega)} &\lesssim \norm{\bu}_{X(\Omega)} + \norm{\mathbb{A}\bu}_{X(\Omega)} \qquad\text{for all}\;\mathbf{u}\in\sobo^{k}X(\Omega;V). 
			\end{split}
		\end{align}
\item\label{item:Korn1B1} The following are equivalent for a seminorm $\vertiii{\cdot}$ on $\sobo^{k}X(\Omega;V)$: 
		\begin{enumerate}
			\item\label{item:Korn1aaa} $\vertiii{\cdot}$ is a norm on $\ker(\mathbb{A};\Omega)\cap\sobo^{k}X(\Omega;V)$. 
			\item\label{item:Korn2aaa} The following \emph{Korn-type inequality} holds 
			\begin{align}\label{eq:improvedKorn-v}
				\|\bu\|_{\sobo^{k}X(\Omega)} \lesssim \vertiii{\bu} + \|\mathbb{A}\bu\|_{X(\Omega)}\qquad\text{for all}\;\bu\in\sobo^{k}X(\Omega). 
			\end{align}
			\item\label{item:Korn3aaa} We have validity of the \emph{Korn-type inequality}
			\begin{align}\label{eq:improvedKorn1-v}
				\|\bu\|_{\sobo^{k}X(\Omega)} \lesssim  \|\mathbb{A}\bu\|_{X(\Omega)}\qquad\text{for all}\;\bu\in\sobo^{k}X(\Omega)\;\text{with}\;\vertiii{\bu}=0. 
			\end{align}
		\end{enumerate}
		\end{enumerate} 
Moreover, if \ref{item:Korn1A1} or \ref{item:Korn1B1} hold, then we automatically have 
\begin{align}\label{eq:nullspaceregularity}
\ker(\mathbb{A};\Omega)=\ker(\mathbb{A};\Omega)\cap \sobo^{k}X(\Omega;V) = \ker(\mathbb{A};\Omega)\cap\hold^{\infty}(\overline{\Omega};V)
\end{align}
and 
\begin{align}\label{eq:poincaredrazic}
    	\inf_{\bm{\rho}\in\ker(\mathbb{A};\Omega)}\norm{\bu-\bm{\rho}}_{X(\Omega)} & \lesssim \|\mathbb{A}\bu\|_{X(\Omega)}\qquad\text{for all}\;\mathbf{u}\in\sobo^{k}X(\Omega;V). 
\end{align} 
\end{theorem}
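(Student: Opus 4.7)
The plan is to establish the equivalence of \emph{\ref{item:Korn1A1}} and \emph{\ref{item:Korn1B1}} by reducing \emph{\ref{item:Korn1A1}} to the hypotheses of Theorem \ref{thm:korn-gen} and, conversely, by specialising the seminorm in \emph{\ref{item:Korn1B1}}. The pivotal new ingredient, absent in Theorem \ref{thm:korn-gen}, is the compact embedding $\sobo^{k}X(\Omega)\hookrightarrow\hookrightarrow X(\Omega)$: this is what lets us upgrade the first Korn inequality \eqref{eq:kornetto0A1} to the Poincar\'{e}--Korn inequality \eqref{eq:poincaredrazic} via Dra\v{z}i\'{c}'s Lemma \ref{lem:drazic}.

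First I would record the nullspace regularity \eqref{eq:nullspaceregularity}, which is automatic under the standing assumptions and independent of \emph{\ref{item:Korn1A1}}. By $\mathbb{C}$-ellipticity and Lemma \ref{lem:Celliptic}, $\ker(\mathbb{A};\Omega)$ is a finite-dimensional space of $V$-valued polynomials, and hence a subspace of $\hold^{\infty}(\overline{\Omega};V)$ since $\Omega$ is bounded. Axiom \ref{itm:BNF-4} implies that bounded measurable functions on $\Omega$ belong to $X(\Omega)$; since the derivatives of polynomials are polynomials, the entire kernel lies in $\sobo^{k}X(\Omega;V)$. Moreover, by Lemma \ref{lem:Celliptic}, $\ker(\mathbb{A};\R^{n})|_{\Omega}=\ker(\mathbb{A};\Omega)$.

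For \emph{\ref{item:Korn1A1}} $\Rightarrow$ \emph{\ref{item:Korn1B1}}, I would apply Dra\v{z}i\'{c}'s Lemma \ref{lem:drazic} with $\mathbb{X}_{1}=\sobo^{k}X(\Omega;V)$, $\mathbb{X}_{0}=X(\Omega;V)$, $S_{0}(\bu)=\|\bu\|_{X(\Omega)}$ and $S_{1}(\bu)=\|\mathbb{A}\bu\|_{X(\Omega)}$: hypothesis \eqref{eq:KornStart} of the lemma amounts to \eqref{eq:kornetto0A1}, and the compact embedding is given by assumption. Conclusion \eqref{eq:Drazic1}, together with the identification $\ker(S_{1})=\ker(\mathbb{A};\Omega)$ supplied by the previous paragraph, yields an estimate of the form $\inf_{\bm{\rho}\in\ker(\mathbb{A};\Omega)}\|\bu-\bm{\rho}\|_{\sobo^{k}X(\Omega)}\lesssim \|\mathbb{A}\bu\|_{X(\Omega)}$, which in particular gives \eqref{eq:poincaredrazic} and consequently $\eqref{eq:kornetto0}_{2}$ in the form required by Theorem \ref{thm:korn-gen} (the kernels $\ker(\mathbb{A};\R^{n})|_{\Omega}$ and $\ker(\mathbb{A};\Omega)$ coincide as noted above). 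Both inequalities in \eqref{eq:kornetto0} being in force with $X=Y$, Theorem \ref{thm:korn-gen} then produces the equivalences asserted in \emph{\ref{item:Korn1B1}}.

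Conversely, assuming \emph{\ref{item:Korn1B1}}, I would take the distinguished seminorm $\vertiii{\bu}\coloneqq\|\bu\|_{X(\Omega)}$: by \ref{itm:BNF-1} this is in fact a norm on all of $\sobo^{k}X(\Omega;V)$, so \ref{item:Korn1aaa} holds, and the implication \ref{item:Korn1aaa} $\Rightarrow$ \ref{item:Korn2aaa} applied to this particular seminorm is precisely \eqref{eq:kornetto0A1}. The main obstacle, such as it is, lies in the bookkeeping of the two \emph{a priori} different kernels $\ker(\mathbb{A};\Omega)$ and $\ker(\mathbb{A};\R^{n})|_{\Omega}$ when feeding Dra\v{z}i\'{c}'s lemma into Theorem \ref{thm:korn-gen}; the nullspace regularity established at the outset collapses both to the same finite-dimensional polynomial space, allowing Dra\v{z}i\'{c}'s quotient estimate to be read as the genuine Poincar\'{e}--Korn inequality \eqref{eq:poincaredrazic}.
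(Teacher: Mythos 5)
Your derivation of \eqref{eq:nullspaceregularity} ``at the outset, independent of \emph{\ref{item:Korn1A1}}'' contains a gap that propagates into the rest of the argument. You invoke Lemma \ref{lem:Celliptic} to assert that $\ker(\mathbb{A};\Omega)$ is a finite-dimensional space of polynomials and that $\ker(\mathbb{A};\R^n)|_{\Omega}=\ker(\mathbb{A};\Omega)$, but Lemma \ref{lem:Celliptic}\ref{item:Cell2} requires $\Omega$ to be \emph{connected}, whereas Theorem \ref{thm:variant} only assumes $\Omega$ open and bounded. For disconnected $\Omega$ (with possibly infinitely many components) $\ker(\mathbb{A};\Omega)$ consists of functions that restrict to \emph{distinct} polynomials on each component; it is then generally infinite-dimensional, strictly larger than $\ker(\mathbb{A};\R^n)|_{\Omega}$, and need not be contained in $\sobo^{k}X(\Omega;V)$ or in $\hold^{\infty}(\overline{\Omega};V)$. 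So \eqref{eq:nullspaceregularity} is genuinely a \emph{conclusion} of \emph{\ref{item:Korn1A1}}, not a standing fact. The paper's proof first deploys the Peetre--Tartar Lemma to get $m\coloneqq\dim(\ker(\mathbb{A};\Omega)\cap\sobo^{k}X(\Omega;V))<\infty$, then runs a counting argument across connected components to show $\Omega$ has at most $m$ components, and only then obtains \eqref{eq:nullspaceregularity}; without this step, there is no route to the statement.

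This gap undermines your plan of routing through Theorem \ref{thm:korn-gen}. Applying Dra\v{z}i\'{c}'s Lemma \ref{lem:drazic} with $S_1(\bu)=\|\mathbb{A}\bu\|_{X(\Omega)}$ gives, via \eqref{eq:Drazic1}, an estimate with the infimum taken over $\ker(S_1)=\ker(\mathbb{A};\Omega)\cap\sobo^{k}X(\Omega;V)$, which is \emph{a priori} a superset of $\ker(\mathbb{A};\R^n)|_{\Omega}$; since the infimum over a smaller set is larger, this does \emph{not} yield $\eqref{eq:kornetto0}_{2}$ in the form Theorem \ref{thm:korn-gen} requires. To convert it you would need the two kernels to coincide, which is exactly what you failed to establish. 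Moreover, Theorem \ref{thm:korn-gen}\ref{item:Korn1B}\ref{item:Korn1} asks for $\vertiii{\cdot}$ to be a norm on $\ker(\mathbb{A};\R^n)|_{\Omega}$, whereas \ref{item:Korn1aaa} in the present theorem refers to $\ker(\mathbb{A};\Omega)\cap\sobo^{k}X(\Omega;V)$, so even formally the statements don't match. The paper avoids all of this bookkeeping by applying Dra\v{z}i\'{c}'s Lemma \emph{directly} with $S_1(\bu)\coloneqq\vertiii{\bu}+\|\mathbb{A}\bu\|_{X(\Omega)}$: then \ref{item:Korn1aaa} gives $\ker(S_1)=\{0\}$ straightaway, and \eqref{eq:KornConclude} yields \ref{item:Korn2aaa} without any detour through Theorem \ref{thm:korn-gen} or any kernel identification. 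Your reverse implication (taking $\vertiii{\cdot}=\|\cdot\|_{X(\Omega)}$) is fine and matches the paper.
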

Comparing assertion \emph{\ref{item:Korn1A1}} and \emph{\ref{item:Korn1A}} from Theorem \ref{thm:korn-gen} for $X=Y$, the sole difference lies between admitting elements $\bm{\rho}\in\ker(\mathbb{A};\R^{n})$ and $\bm{\rho}\in\ker(\Omega;\R^{n})$. In particular, subject to \eqref{eq:kornetto0A1}, $\Omega$ need not be connected. The proof below uses Dra\v{z}i\'{c}'s Lemma \ref{lem:drazic}, but can alternatively be obtained by means of the direct approach discussed in Section \ref{sec:directproof} if one aims to track constants. 

Additionally, for the proof of Theorem \ref{thm:variant} we require the classical Peetre-Tartar lemma. 
\begin{lemma}[Peetre-Tartar lemma, {\cite[Lem.~11.1]{Tartar}}]\label{lem:PT} Let $(\mathbb{Y}_{i},\|\cdot\|_{i})$, $i\in\{0,1,2\}$, be three Banach spaces. Moreover, let $S\colon \mathbb{Y}_{0}\to\mathbb{Y}_{1}$ be linear and compact, and let $T\colon\mathbb{Y}_{0}\to\mathbb{Y}_{2}$ be linear and bounded. If $x\mapsto \|Sx\|_{1}+\|Tx\|_{2}$ is a norm on $\mathbb{Y}_{0}$ which is equivalent to $\|\cdot\|_{0}$, then one as that $\dim(\ker(T))<\infty$.
\end{lemma}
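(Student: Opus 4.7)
The plan is to exploit the fact that on $\ker(T)$ the norm $\|x\|_{0}$ is controlled purely by $\|Sx\|_{1}$, and since $S$ is compact, this forces the closed unit ball of $\ker(T)$ to be compact; Riesz's theorem then gives finite-dimensionality. More concretely, I will first observe that $\ker(T)$ is a closed subspace of $(\mathbb{Y}_{0},\|\cdot\|_{0})$ because $T$ is continuous, so $(\ker(T),\|\cdot\|_{0})$ is itself a Banach space.

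Next I will use the hypothesis. By assumption there exists $c>0$ such that
\begin{equation*}
\|x\|_{0} \leq c(\|Sx\|_{1}+\|Tx\|_{2}) \qquad \text{for all } x\in\mathbb{Y}_{0}.
\end{equation*}
Restricting to $x\in\ker(T)$ yields $\|x\|_{0}\leq c\|Sx\|_{1}$. I will then pick an arbitrary bounded sequence $(x_{j})\subset\ker(T)$ with $\|x_{j}\|_{0}\leq 1$. Compactness of $S\colon\mathbb{Y}_{0}\to\mathbb{Y}_{1}$ yields a (non-relabelled) subsequence such that $(Sx_{j})$ is Cauchy in $\mathbb{Y}_{1}$. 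Applying the restricted inequality to $x_{j}-x_{l}\in\ker(T)$ gives
\begin{equation*}
\|x_{j}-x_{l}\|_{0} \leq c\|S(x_{j}-x_{l})\|_{1} \xrightarrow{j,l\to\infty} 0,
\end{equation*}
so $(x_{j})$ is Cauchy in $(\ker(T),\|\cdot\|_{0})$ and thus convergent by completeness.

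Consequently the closed unit ball of $\ker(T)$ is sequentially compact, and since we are working in a Banach space this is equivalent to compactness. By the classical Riesz lemma — a normed space whose closed unit ball is compact is finite-dimensional — we conclude $\dim(\ker(T))<\infty$, as required. The proof is essentially a one-shot application of compactness, so I do not anticipate any real obstacle; the only point to handle carefully is that the equivalence of $x\mapsto\|Sx\|_{1}+\|Tx\|_{2}$ with $\|\cdot\|_{0}$ is used only in its ``$\leq$'' direction, and only the restriction to $\ker(T)$ is actually needed.
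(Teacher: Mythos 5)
Your proof is correct. Note, though, that the paper does not supply its own proof of this lemma: it is stated as a citation to Tartar's book (Lem.\ 11.1). So there is no paper proof to compare against; your argument is simply the standard one. You correctly isolate the only direction of the norm equivalence that is needed, namely $\|x\|_{0}\leq c(\|Sx\|_{1}+\|Tx\|_{2})$, restrict to $\ker(T)$ to obtain $\|x\|_{0}\leq c\|Sx\|_{1}$, and then combine compactness of $S$ with completeness of the closed subspace $\ker(T)$ to show the closed unit ball of $\ker(T)$ is (sequentially, hence) compact, which by Riesz's theorem forces $\dim(\ker(T))<\infty$. All the steps check out; the one cosmetic remark is that sequential compactness is equivalent to compactness in any metric space, not just Banach spaces, so the appeal to completeness at that specific point is unnecessary (completeness is of course essential earlier, to extract the limit of the Cauchy subsequence).
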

\begin{proof}[Proof of Theorem \ref{thm:variant}]
On '\emph{\ref{item:Korn1A1}$\Rightarrow$\ref{item:Korn1B1}}'. By assumption, we have the compact embedding 
\begin{align*}
\mathbb{Y}_{0}\coloneqq \sobo^{k}X(\Omega;V)\hookrightarrow\hookrightarrow \mathbb{Y}_{1}\coloneqq X(\Omega;V).
\end{align*}
In the framework of Lemma~\ref{lem:PT}, we moreover put $\mathbb{Y}_{2}\coloneqq X(\Omega;W)$, $S$ the identity map, and  $T\coloneqq\mathbb{A}$ is bounded. Then  $\eqref{eq:kornetto0A1}$ and Lemma~\ref{lem:PT} imply that $\dim(\ker(\mathbb{A};\Omega)\cap\sobo^{k}X(\Omega;V))<\infty$. 

On '\ref{item:Korn1aaa}$\Rightarrow$\ref{item:Korn2aaa}'. In the present situation, we set $\mathbb{X}_{0}\coloneqq X(\Omega;V)$, $\mathbb{X}_{1}\coloneqq \sobo^{k}X(\Omega;V)$, and 
\begin{align*}
S_{0}(\mathbf{u})\coloneqq\|\mathbf{u}\|_{X(\Omega)}\;\;\;\text{and}\;\;\;S_{1}(\mathbf{u})\coloneqq\vertiii{\mathbf{u}}+\|\mathbb{A}\mathbf{u}\|_{X(\Omega)},\qquad\mathbf{u}\in\sobo^{k}X(\Omega;V). 
\end{align*}
Then the inequality $\|\mathbf{u}\|_{\sobo^{k}X(\Omega)}\leq S_{0}(\mathbf{u})+S_{1}(\mathbf{u})$ for $\mathbf{u}\in\sobo^{k}X(\Omega;V)$ follows from \emph{\ref{item:Korn1A1}}. Now, \ref{item:Korn1aaa} implies that $\ker(S_{1})=\{0\}$. Because of $\mathbb{X}_{1}\hookrightarrow\hookrightarrow\mathbb{X}_{0}$, Dra\v{z}i\'{c}'s Lemma~\ref{lem:drazic} yields~\ref{item:Korn2aaa}. The remaining directions '\ref{item:Korn2aaa}$\Rightarrow$\ref{item:Korn3aaa}$\Rightarrow$\ref{item:Korn1aaa}' are trivial. 

For '\emph{\ref{item:Korn1B1}$\Rightarrow$\ref{item:Korn1A1}}', it suffices to note that $\|\cdot\|_{X(\Omega)}$ is a norm on $\ker(\mathbb{A};\Omega)\cap\sobo^{k}X(\Omega;V)$. In particular,  \ref{item:Korn1aaa} is fulfilled with $\vertiii{\cdot}=\|\cdot\|_{X(\Omega)}$. In consequence, \emph{\ref{item:Korn1A1}} follows from~\ref{item:Korn2aaa}. Summarising, \emph{\ref{item:Korn1A1}} and \emph{\ref{item:Korn1B1}} are equivalent. 

To prove \eqref{eq:nullspaceregularity}, we claim that \emph{\ref{item:Korn1A1}} or equivalently \emph{\ref{item:Korn1B1}} imply that $\Omega$ has only finitely many connected components. To see this, recall that if~\emph{\ref{item:Korn1A1}} holds, then by the Peetre--Tartar Lemma~\ref{lem:PT} one has that $m \coloneqq \dim(\ker(\mathbb{A};\Omega)\cap\sobo^{k}X(\Omega;V))<\infty$. Now suppose that $\Omega$ has $M>m$ connected components, so that we may write $\Omega=(\Omega_{1}\cup...\cup\Omega_{M})\cup U_{M+1}$, where $\Omega_{1},...,\Omega_{M},U_{M+1}$ are open and pairwise disjoint. As a consequence of Lemma \ref{lem:Celliptic} and because $M$ is finite, we have 
\begin{align}\label{eq:isomorphy}
\mathscr{X}_{M}\coloneqq\ker\Big(\mathbb{A};\bigcup_{j=1}^{M}\Omega_{j}\Big)\cap\sobo^{k}X\Big(\bigcup_{j=1}^{M}\Omega_{j};V\Big) = \ker\Big(\mathbb{A};\bigcup_{j=1}^{M}\Omega_{j}\Big)\cong \bigoplus_{j=1}^{M} \ker(\mathbb{A};\Omega_{j}).
\end{align}
Every $\ker(\mathbb{A};\Omega_{j})$ is at least one-dimensional, since it contains the constants. Therefore, 
\begin{align*}
m < M \leq \dim\Big(\bigoplus_{j=1}^{M}\ker(\mathbb{A};\Omega_{j})\Big) = \dim(\mathscr{X}_{M}) \leq \dim(\ker(\mathbb{A};\Omega)\cap\sobo^{k}X(\Omega;V))=m. 
\end{align*}
This is a contradiction, and so $\Omega$ can have at most $M\leq m$ connected components. This directly proves~\eqref{eq:nullspaceregularity}, and then Dra\v{z}i\'{c}'s Lemma \ref{lem:drazic} applied to $S_1(\bu) \coloneqq \|\mathbb{A} \bu \|_{X(\Omega)}$ and $S_0(\bu) \coloneqq \norm{\bu}_{X(\Omega)}$, see \eqref{eq:Drazic1}, implies \eqref{eq:poincaredrazic}. The proof is complete.
\end{proof}
As a by-product of the preceding proof, we record: 
\begin{corollary}\label{cor:connectedness}
In the situation of Theorem \ref{thm:korn-gen}, let $X=Y$ and suppose that $\sobo^{k}X(\Omega)\hookrightarrow\hookrightarrow X(\Omega)$. Then the validity of the Korn-type inequality 
\begin{align*}
\|\mathbf{u}\|_{\sobo^{k}X(\Omega)}\lesssim \|\mathbf{u}\|_{X(\Omega)} + \|\mathbb{A}\mathbf{u}\|_{X(\Omega)}\qquad\text{for all}\;\mathbf{u}\in\sobo^{k}X(\Omega)
\end{align*}
implies that $\Omega$ has at most \emph{finitely many connected components}. 
\end{corollary}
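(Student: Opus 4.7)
The plan is to observe that the corollary is essentially an extraction of the connectedness argument that already appeared in the proof of Theorem \ref{thm:variant}: the assumed Korn-type inequality is exactly the hypothesis \emph{\ref{item:Korn1A1}} of Theorem \ref{thm:variant}, so we are in a position to invoke the Peetre--Tartar lemma and the direct-sum decomposition \eqref{eq:isomorphy}.

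First, I would set $\mathbb{Y}_{0}\coloneqq \sobo^{k}X(\Omega;V)$, $\mathbb{Y}_{1}\coloneqq X(\Omega;V)$ and $\mathbb{Y}_{2}\coloneqq X(\Omega;W)$, and let $S\colon \mathbb{Y}_{0}\to\mathbb{Y}_{1}$ be the inclusion (which is compact by hypothesis) and $T\coloneqq \mathbb{A}\colon \mathbb{Y}_{0}\to\mathbb{Y}_{2}$ (which is bounded). The assumed Korn-type inequality together with the trivial reverse bound expresses that $\bu\mapsto \|S\bu\|_{\mathbb{Y}_{1}}+\|T\bu\|_{\mathbb{Y}_{2}}$ is a norm on $\mathbb{Y}_{0}$ equivalent to the $\sobo^{k}X$-norm. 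Lemma \ref{lem:PT} therefore yields
\begin{align*}
m\coloneqq \dim\bigl(\ker(\mathbb{A};\Omega)\cap \sobo^{k}X(\Omega;V)\bigr)<\infty.
\end{align*}

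Next, I would argue by contradiction: assume $\Omega$ has at least $M>m$ distinct connected components $\Omega_{1},\dots,\Omega_{M}$. Since $\mathbb{A}$ is a homogeneous operator of positive order $k$, the constant $V$-valued functions lie in $\ker(\mathbb{A};\Omega_{j})$ for each $j$, so $\dim\ker(\mathbb{A};\Omega_{j})\geq 1$. Because $\Omega_{1},\dots,\Omega_{M}$ are pairwise disjoint open components of $\Omega$, picking a non-zero $v_{j}\in V$ and extending $v_{j}\mathbbm{1}_{\Omega_{j}}$ by zero on the remaining components produces $M$ linearly independent functions $\bm{\rho}_{1},\dots,\bm{\rho}_{M}$ that lie in $\ker(\mathbb{A};\Omega)$ and are locally constant on $\Omega$; alternatively, invoking \eqref{eq:isomorphy} gives this directly as $\ker(\mathbb{A};\cup_{j}\Omega_{j})\cong \bigoplus_{j=1}^{M}\ker(\mathbb{A};\Omega_{j})$.

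The main subtle point — and the only place where one must do a bit of work beyond the Peetre--Tartar argument — is to verify that each such locally constant $\bm{\rho}_{j}$ actually belongs to $\sobo^{k}X(\Omega;V)$, so that it contributes to $\ker(\mathbb{A};\Omega)\cap\sobo^{k}X(\Omega;V)$ and not merely to $\ker(\mathbb{A};\Omega)$. This uses boundedness of $\Omega$ (from the standing hypothesis of Theorem \ref{thm:korn-gen}), so that $\mathscr{L}^{n}(\Omega_{j})<\infty$, together with property \ref{itm:BNF-4} of Banach function norms which yields $\mathbbm{1}_{\Omega_{j}}\in X(\Omega)$; all weak derivatives of $\bm{\rho}_{j}$ vanish on $\Omega$ because the interfaces between distinct connected components lie outside $\Omega$. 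Consequently $\bm{\rho}_{1},\dots,\bm{\rho}_{M}$ form $M$ linearly independent elements of $\ker(\mathbb{A};\Omega)\cap \sobo^{k}X(\Omega;V)$, giving $M\leq m$, contrary to the assumption $M>m$. This contradiction forces $\Omega$ to have at most $m$ connected components and completes the proof.
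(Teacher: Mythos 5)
Your proof is correct and is essentially the same argument the paper gives (embedded in the proof of Theorem \ref{thm:variant}, from which Corollary \ref{cor:connectedness} is extracted as a by-product): apply the Peetre--Tartar Lemma \ref{lem:PT} to get $m=\dim(\ker(\mathbb{A};\Omega)\cap\sobo^{k}X(\Omega;V))<\infty$, then exhibit $M>m$ linearly independent locally constant kernel elements supported on distinct components to reach a contradiction. Your explicit verification via \ref{itm:BNF-4} that the locally constant functions lie in $\sobo^{k}X(\Omega;V)$ is a careful spelling-out of what the paper compresses into the identification \eqref{eq:isomorphy}, not a deviation from it.
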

\begin{remark}[Explicit constants]\label{rem:explicitconstants}
  {In certain situations explicit constants in the Korn inequalities or upper bounds are available, see \cite{BauerPauly,PayneWeinberger,DesvilletesVillani} for an incomplete list. In this regard, the alternative direct can be employed to yield specific dependencies on the underlying constants.
  }
  \end{remark}
We conclude this section with an instance where Dra\v{z}i\'{c}'s Lemma \ref{lem:drazic} does not apply, whereas the direct proof from Section~\ref{sec:directproof} yields the desired inequality. This concerns the case $p=1$, where \eqref{eq:SobolevIntro} holds \cite{GmeinederRaita}, but \eqref{eq:SobolevIntro1} does not; the latter is a well-known principle usually referred to as \emph{Ornstein's Non-Inequality} \cite{Ornstein}; see also \cite{KirchheimKristensen} for the general context considered here. For instance, we have the following 
\begin{proposition}\label{prop:1}
Let $\Omega\subset\R^{n}$ be open and bounded, and let $\mathbb{A}$ be a first order $\mathbb{C}$-elliptic operator of the form \eqref{eq:form}. We define 
\begin{align*}
\mathrm{BV}^{\mathbb{A}}(\Omega)\coloneqq \{\mathbf{u}\in\lebe^{1}(\Omega;V)\colon\;\mathbb{A}\mathbf{u}\;\text{is a finite $W$-valued Radon measure}\}
\end{align*}
and denote by $|\mathbb{A}\mathbf{u}|(\Omega)$ the total variation of $\mathbb{A}\mathbf{u}$. Moreover, suppose that $\Omega$ is such that 
\begin{align}\label{eq:Sobolevp=1}
\begin{split}
\|\mathbf{u}\|_{\lebe^{\frac{n}{n-1}}(\Omega)} & \lesssim \|\mathbf{u}\|_{\lebe^{1}(\Omega)} + |\mathbb{A}\mathbf{u}|(\Omega)\qquad \text{and} \\ 
\inf_{\bm{\rho}\in\ker(\mathbb{A};\R^{n})}\|\mathbf{u}-\bm{\rho}\|_{\lebe^{1}(\Omega)}& \lesssim |\mathbb{A}\mathbf{u}|(\Omega)
\end{split}
\end{align}
hold for all $\mathbf{u}\in\mathrm{BV}^{\mathbb{A}}(\Omega)$ and let $\vertiii{\cdot}$ be a seminorm on $\mathrm{BV}^{\mathbb{A}}(\Omega)$. Then the inequality 
\begin{align}\label{eq:conclude1}
\|\mathbf{u}\|_{\lebe^{\frac{n}{n-1}}(\Omega)}\lesssim \vertiii{\mathbf{u}} + |\mathbb{A}\mathbf{u}|(\Omega)\qquad\text{for all}\;\mathbf{u}\in\mathrm{BV}^{\mathbb{A}}(\Omega)
\end{align}
holds if and only if $\vertiii{\cdot}$ is a norm on $\ker(\mathbb{A};\R^{n})|_{\Omega}$. 
\end{proposition}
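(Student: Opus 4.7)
The plan is to adapt the direct proof of Theorem~\ref{thm:korn-gen} from Section~\ref{sec:directproof} to the $\lebe^{1}$-based $\mathrm{BV}^{\mathbb{A}}$ setting, with $\sobo^{k}X(\Omega;V)$ replaced by $\mathrm{BV}^{\mathbb{A}}(\Omega)$ and the quantity $\|\mathbb{A}\cdot\|_{X(\Omega)}$ by the total variation $|\mathbb{A}\cdot|(\Omega)$. The ``only if'' direction is immediate: if \eqref{eq:conclude1} holds and $\mathbf{u}\in\ker(\mathbb{A};\R^{n})|_{\Omega}$ satisfies $\vertiii{\mathbf{u}}=0$, then $|\mathbb{A}\mathbf{u}|(\Omega)=0$, whence $\|\mathbf{u}\|_{\lebe^{\frac{n}{n-1}}(\Omega)}=0$ and $\mathbf{u}=0$, so $\vertiii{\cdot}$ is a norm on the null space.

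For the ``if'' direction, I would first invoke Lemma~\ref{lem:Celliptic} to ensure that $\ker(\mathbb{A};\R^{n})$ is a finite-dimensional space of $V$-valued polynomials, so the infimum in $\eqref{eq:Sobolevp=1}_{2}$ is attained at some $\bm{\rho}_{0}\in\ker(\mathbb{A};\R^{n})$. Applying the triangle inequality in $\lebe^{\frac{n}{n-1}}(\Omega)$, then $\eqref{eq:Sobolevp=1}_{1}$ to $\mathbf{u}-\bm{\rho}_{0}$ (noting that $\mathbb{A}(\mathbf{u}-\bm{\rho}_{0})=\mathbb{A}\mathbf{u}$), and finally $\eqref{eq:Sobolevp=1}_{2}$, yields
\begin{align*}
\|\mathbf{u}\|_{\lebe^{\frac{n}{n-1}}(\Omega)} \lesssim |\mathbb{A}\mathbf{u}|(\Omega) + \|\bm{\rho}_{0}\|_{\lebe^{\frac{n}{n-1}}(\Omega)},
\end{align*}
reducing the task to controlling $\|\bm{\rho}_{0}\|_{\lebe^{\frac{n}{n-1}}(\Omega)}$ by $\vertiii{\mathbf{u}}+|\mathbb{A}\mathbf{u}|(\Omega)$.

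Since $\vertiii{\cdot}$ is a norm on the finite-dimensional space $\ker(\mathbb{A};\R^{n})|_{\Omega}$, all norms there are equivalent, so $\|\bm{\rho}_{0}\|_{\lebe^{\frac{n}{n-1}}(\Omega)}\lesssim\vertiii{\bm{\rho}_{0}}$. I would then fix any bounded linear projection $\Pi\colon\mathrm{BV}^{\mathbb{A}}(\Omega)\to\ker(\mathbb{A};\R^{n})|_{\Omega}$ (which exists because the range is finite-dimensional) and, using $\Pi(\bm{\rho}_{0})=\bm{\rho}_{0}$, split $\bm{\rho}_{0}=\Pi(\mathbf{u})-\Pi(\mathbf{u}-\bm{\rho}_{0})$. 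The second summand is estimated via equivalence of norms on the null space and boundedness of $\Pi$: $\vertiii{\Pi(\mathbf{u}-\bm{\rho}_{0})}\lesssim\|\mathbf{u}-\bm{\rho}_{0}\|_{\lebe^{1}(\Omega)}\lesssim|\mathbb{A}\mathbf{u}|(\Omega)$ by $\eqref{eq:Sobolevp=1}_{2}$. The first summand is handled by writing $\Pi(\mathbf{u})=\mathbf{u}-(\mathbf{u}-\Pi(\mathbf{u}))$, invoking the seminorm property of $\vertiii{\cdot}$ and its continuity on $\mathrm{BV}^{\mathbb{A}}(\Omega)$. This parallels the treatment of the term $B$ in the proof of Theorem~\ref{thm:korn-gen}.

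The main obstacle, and the reason why the direct route is essential, is that Dra\v{z}i\'{c}'s Lemma~\ref{lem:drazic} does not apply here: the embedding $\mathrm{BV}^{\mathbb{A}}(\Omega)\hookrightarrow\lebe^{\frac{n}{n-1}}(\Omega)$ provided by $\eqref{eq:Sobolevp=1}_{1}$ is at the critical Sobolev exponent and genuinely fails to be compact, as is classical already for $\mathrm{BV}(\Omega)\hookrightarrow\lebe^{\frac{n}{n-1}}(\Omega)$. Moreover, by Ornstein's non-inequality \cite{Ornstein} one cannot upgrade to the stronger Sobolev--Korn estimate $\|\mathbf{u}\|_{\sobo^{1,1}(\Omega)}\lesssim\|\mathbf{u}\|_{\lebe^{1}(\Omega)}+|\mathbb{A}\mathbf{u}|(\Omega)$, so the two-step compactness workaround used in Example~\ref{ex:drazicredux} for $p>1$ is also unavailable. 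The virtue of the direct argument is that it bypasses compactness entirely and relies only on the two a priori inequalities \eqref{eq:Sobolevp=1} together with the finite-dimensionality of $\ker(\mathbb{A};\R^{n})|_{\Omega}$.
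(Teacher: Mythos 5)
Your proposal follows the paper's own argument: the direct proof of Theorem~\ref{thm:korn-gen} is adapted verbatim, using the attained minimiser $\bm{\rho}_0$, the two a priori inequalities \eqref{eq:Sobolevp=1}, and a bounded linear projection $\Pi$ onto the finite-dimensional null space, and you correctly pinpoint why the direct route is forced here (the critical embedding $\mathrm{BV}^{\mathbb{A}}(\Omega)\hookrightarrow\lebe^{\frac{n}{n-1}}(\Omega)$ is non-compact, and Ornstein's non-inequality rules out the $\sobo^{1,1}$-upgrade that Example~\ref{ex:drazicredux} relies on for $p>1$). The only presentational difference is in the treatment of the term $B=\|\bm{\rho}_0\|_{\lebe^{n/(n-1)}(\Omega)}$: the paper first passes to $\|\Pi(\bm{\rho}_0)\|_{\lebe^1(\Omega)}$ by equivalence of norms on the kernel and then estimates $\|\Pi(\mathbf{u})\|_{\lebe^1(\Omega)}\lesssim\vertiii{\mathbf{u}}$, whereas you first pass to $\vertiii{\bm{\rho}_0}$ and split $\Pi(\mathbf{u})=\mathbf{u}-(\mathbf{u}-\Pi(\mathbf{u}))$; the two reorganisations are equivalent. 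Note that your version makes explicit the need for $\vertiii{\cdot}$ to be continuous on $\mathrm{BV}^{\mathbb{A}}(\Omega)$ when controlling the remaining term by $\vertiii{\mathbf{u}}+|\mathbb{A}\mathbf{u}|(\Omega)$ -- the paper leaves this implicit in its choice of projection, and spelling it out is a reasonable clarification rather than a departure from the argument.
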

\begin{proof}
This follows analogously as in the direct proof of Theorem \ref{thm:korn-gen} from Section \ref{sec:directproof}. Retaining the conventions of this proof, we have as a substitute of \eqref{eq:intermediate1}: 
\begin{align*}
\|\mathbf{u}\|_{\lebe^{\frac{n}{n-1}}(\Omega)} & \leq \|\mathbf{u}-\bm{\rho}_{0}\|_{\lebe^{\frac{n}{n-1}}(\Omega)}+\|\bm{\rho}_{0}\|_{\lebe^{\frac{n}{n-1}}(\Omega)} \\ 
& \lesssim \|\mathbf{u}-\bm{\rho}_{0}\|_{\lebe^{1}(\Omega)}+|\mathbb{A}\mathbf{u}|(\Omega) + \|\bm{\rho}_{0}\|_{\lebe^{\frac{n}{n-1}}(\Omega)} \\ 
& \lesssim |\mathbb{A}\mathbf{u}|(\Omega) + \|\bm{\rho}_{0}\|_{\lebe^{\frac{n}{n-1}}(\Omega)} \eqqcolon A + B. 
\end{align*}
Now, subject to $\vertiii{\cdot}$ being a norm on $\ker(\mathbb{A};\R^{n})|_{\Omega}$, we have that 
\begin{align*}
B & = \|\bm{\rho}_{0}\|_{\lebe^{\frac{n}{n-1}}(\Omega)} = \|\Pi(\bm{\rho}_{0})\|_{\lebe^{\frac{n}{n-1}}(\Omega)}\\ 
& \lesssim \|\Pi(\bm{\rho}_{0})\|_{\lebe^{1}(\Omega)} = \|\Pi(\mathbf{u}-\bm{\rho}_{0})\|_{\lebe^{1}(\Omega)} + \|\Pi(\mathbf{u})\|_{\lebe^{1}(\Omega)} \\ 
& \lesssim \|\mathbf{u}-\bm{\rho}_{0}\|_{\lebe^{1}(\Omega)} + \vertiii{\mathbf{u}} \lesssim |\mathbb{A}\mathbf{u}|(\Omega)+\vertiii{\mathbf{u}}. 
\end{align*}
Combining the previous estimates, \eqref{eq:conclude1} follows. Conversely, if $\vertiii{\cdot}$ is a seminorm which fails to be a norm, there exists an element $\mathbf{u}\in\mathrm{BV}^{\mathbb{A}}(\Omega)$ for which the right-hand side of \eqref{eq:conclude1} vanishes, whereas the left-hand side does not. This completes the proof. 
\end{proof}
Note that Dra\v{z}i\'{c}'s Lemma \ref{lem:drazic} cannot be applied here for several reasons; besides the mismatching Banach space set-up, the embedding $\lebe^{\frac{n}{n-1}}(\Omega)\hookrightarrow\lebe^{1}(\Omega)$ is never compact. 
\begin{remark}
The spaces $\mathrm{BV}^{\mathbb{A}}(\Omega)$ from~\cite{BreitDieningGmeineder} yield the classical space $\mathrm{BV}(\Omega)$ for $\mathbb{A}=\nabla$ and the space $\mathrm{BD}(\Omega)$ of functions of bounded formation provided that $\mathbb{A}=\varepsilon$. In particular, Proposition \ref{prop:1} applies to these special cases. 
\end{remark}
\begin{remark}
The validity of~\eqref{eq:Sobolevp=1} is always ensured provided that $\Omega$ is an open and bounded Lipschitz domain or, more generally, an $(\varepsilon,\delta)$-domain in the sense of Jones~\cite{Jones}; see Section \ref{sec:validityfirst} below for this terminology. 
More precisely, $\eqref{eq:Sobolevp=1}_{1}$ follows from~\cite{GmeinederRaita}, in turn localising the ellipticity and cancellation condition from \cite{VanSchaftingen} to domains. Inequality $\eqref{eq:Sobolevp=1}_{2}$ follows, e.g., from~\cite{BreitDieningGmeineder,DieningGmeineder,Kalamajska}.
\end{remark}

\section{Bulk and boundary trace conditions}\label{sec:choices0}
In this section, we collect specific instances of Korn-type inequalities where the seminorms $\vertiii{\cdot}$ are given by integrals over sets of positive Lebesgue measure or boundary trace integrals. Table \ref{table:InequalitiesSummary} provides a summary of the underlying inequalities to be established in this and the following section.

\subsection{Validity of Korn's first inequality}\label{sec:validityfirst}
Both Theorems~\ref{thm:korn-gen} and~\ref{thm:variant} establish variants of Korn's inequality \emph{subject} to the validity of Korn's first inequality, see   $\eqref{eq:kornetto0}_{1}$ and \eqref{eq:kornetto0A1}, respectively. The latter requires conditions on both the underlying function spaces, the domains and the differential operators. In view of the examples below in Sections~\ref{sec:bulkcond}--\ref{sec:choices}, we now collect various conditions on the domains which lead to the validity of Korn's first inequality.

To this end, it is customary to define for a $\mathbb{C}$-elliptic operator of the form \eqref{eq:form} and an open  set $\Omega\subset\R^{n}$
\begin{align}
\sobo^{\mathbb{A}}X(\Omega)\coloneqq\{u\in \sobo^{k-1}X(\Omega;V)\colon\;\mathbb{A}u\in X(\Omega;W)\},
\end{align}
and to endow it with the natural norm $\|\mathbf{u}\|_{\sobo^{\mathbb{A}}X(\Omega)}\coloneqq\|\mathbf{u}\|_{\sobo^{k-1}X(\Omega)}+\|\mathbb{A}\mathbf{u}\|_{X(\Omega)}$. We shall consider open and bounded domains of the following types:

\begin{table}
\begin{tabular}{c||c|c|c|c}
 & Bulk & Full boundary  & Partial boundary & Lower dimensional \\
{{\large $\mathbb{A}$}} & \hspace{0.3cm} conditions\hspace{0.3cm} & traces & traces & traces\\
\hline 
\hline
 $\nabla^{D}$ &
  \multirow{4}{5em}{
\(\left\{\rule{0pt}{2.9em}\right.\)  
 }\hspace{-36pt}
 \multirow{4}{5em}{Prop.~\ref{prop:bulk} Prop.~\ref{prop:Orlicz}} &  
 \multirow{4}{5em}{
\(\left\{\rule{0pt}{2.9em}\right.\)  
 }\hspace{-36pt}
 \multirow{4}{5em}{Prop.~\ref{prop:fullbdrytrace}
 Prop. \ref{prop:fullp=1} Prop.~\ref{prop:Orlicz}} & Prop.~\ref{prop:devgradpartial} & Ex.~\ref{ex:devgradmu} \\
$\sg$ &  & & Ex.~\ref{prop:partialtracesymgrad} & Ex.~\ref{ex:mutracesymgrad1}, \ref{ex:symgradn=3}\\
$\sg^{D}$ &  &  & Ex.~\ref{ex:devsymgradcomplications} & Ex.~\ref{ex:devgrad}\\
general &  &  & Ex.~\ref{ex:devsymgradcomplications} & Ex.~\ref{ex:devgrad}
\end{tabular}
\vspace{0.5cm}
\caption{Summary of the inequalities established in Sections \ref{sec:choices0} and \ref{sec:choices}.}\label{table:InequalitiesSummary}
\end{table}

\begin{itemize}
 \item \emph{Domains with Lipschitz or $\hold^{k}$-boundaries.} As usual, we say that $\Omega\subset\R^{n}$ has Lipschitz or $\hold^{k}$-boundary if the following holds for every point $x_{0}\in\partial\Omega$: After a rotation and translation, there exists open and bounded sets $U\subset\R^{n-1}$, $V\subset\R^{n}$ and a Lipschitz or $\hold^{k}$-function $f\colon U\to \R$ such that $x_{0}\in\mathrm{graph}(f)$,  $\partial\Omega\cap V=\mathrm{graph}(f)$, $\Omega\cap V$ lies strictly below $\mathrm{graph}(f)$ and $(\R^{n}\setminus\overline{\Omega})\cap V$ lies strictly above $\mathrm{graph}(f)$. Moreover, we suppose that  the Lipschitz or $\hold^{k}$-norms of such functions $f$ to be uniformly bounded.
	\item \emph{Extension domains for $\sobo^{\mathbb{A}}X$.} If there exists a bounded linear extension operator $E\colon \sobo^{\mathbb{A}}X(\Omega)\to\sobo^{\mathbb{A}}X(\R^{n})$ (so, in particular, $Eu|_{\Omega}=u$ for all $u\in\sobo^{\mathbb{A}}X(\Omega)$), $\Omega$ is called an extension domain for $\sobo^{\mathbb{A}}X$.
	\item \emph{Jones $(\varepsilon,\delta)$-domains.} A bounded domain $\Omega\subset\R^{n}$ is an $(\varepsilon,\delta)$-domain with $0<\varepsilon,\delta<\infty$ if it satisfies the following property: If $x,y\in\Omega$ are such that $|x-y|<\delta$, then there exists a curve $\gamma$ connecting $x$ and $y$ such that 
	\begin{align}\label{eq:Jones}
	\ell(\gamma)\leq \frac{|x-y|}{\varepsilon}\quad\text{and}\quad\mathrm{dist}(z,\partial\Omega)\geq \varepsilon\min\{|x-z|,|y-z|\},
	\end{align}
	for all elements $z$ on the curve $\gamma$. This means that each two sufficiently close points can be joined by a curve with length bounded by their distance, and every point of this curve is at a sufficient distance from $\partial\Omega$. In particular, the curve can be surrounded by a banana-like domain with uniform parameters which still lies inside $\Omega$, see Figure \ref{fig:domains}(i). This condition is violated, e.g., in the cusp example in Figure \ref{fig:domains}(ii): Fixing $x_{0}$ and considering points $y_{0}$ very close to the cusp, it is not possible to find such uniform banana-like domains. Conversely, by considering points $x_{0},y_{0}$ close to the slit   in Figure \ref{fig:domains}(iii), the first condition in $\eqref{eq:Jones}$  is violated. 
	\item \emph{John domains.} A bounded domain $\Omega\subset\R^{n}$ is \emph{John} if there exists a distinguished point $z_{0}\in\Omega$ and a constant $c_{J}>0$ with the following property: For any $x\in\Omega$, there exists a curve $\gamma\colon[0,\ell(\gamma)]\to\Omega$ parametrised by arc-length such that $\gamma(0)=x$, $\gamma(\ell(\gamma))=z_{0}$ and $\mathrm{dist}(\gamma(t),\partial\Omega)\geq c_{J}t$ for all $t\in[0,\ell(\gamma)]$. For instance, the slit domain in Figure~\ref{fig:domains}(iii) or the twisted brick example in Figure~\ref{fig:domains}(iv) are John.
\end{itemize}
Since polyhedral domains do not necessarily have Lipschitz boundary (see Figure \ref{fig:domains}(iv)), John domains are particularly important in applications in numerical analysis; 
there, polyhedral domains appear as domains, as approximations of domains with curved boundaries, and as neighbourhoods of polytopal partitions.   
\begin{remark}[Relations and properties]\label{rem:relations} The aforementioned conditions are related as follows: 
	\begin{itemize}
        \item[(i)] Every domain with Lipschitz or $\hold^{k}$-boundary is an $(\varepsilon,\delta)$-domain, and every $(\varepsilon,\delta)$-domain is an extension domain for $\sobo^{\mathbb{A}}\lebe^{p}$. The latter follows, e.g., from \cite{GmeinederRaita} based on  an adaptation of Jones' original method for Sobolev spaces \cite{Jones}. Moreover, in all these cases, we have the compact embedding $\sobo^{k,p}(\Omega)\hookrightarrow\hookrightarrow\lebe^{p}(\Omega)$. 
		\item[(ii)] Open and bounded domains with Lipschitz or $\hold^{k}$-boundary are John. John domains may have slits and thus are, in general, not extension domains for $\sobo^{1,p}$; in particular, since every $(\varepsilon,\delta)$-domain is a $\sobo^{1,p}$-extension domain, John domains are not necessarily $(\varepsilon,\delta)$-domains.  Moreover, by \cite[Thm.~2.2]{Chuaetal}, we have the compact embedding $\sobo^{1,p}(\Omega)\hookrightarrow\hookrightarrow\lebe^{p}(\Omega)$ for John domains. 
	\end{itemize}

\end{remark}
\begin{figure}[t]
\begin{tikzpicture}
\clip (-1.6,0.86) rectangle (7.9,2.6);
\begin{scope}{xshift=4cm}
\clip(-1.5,0.8) rectangle (7.6,2.5);
\draw[-, rounded corners =5pt] (-2,2) -- (-1,1) -- (0,1.5) -- (1,1) [out= 0, in = 240] to (2,2.5);
\draw[-,fill=black!20!white, rounded corners =5pt] (-2.5,2.5) -- (-2,2) -- (-1,1) -- (0,1.5) -- (1,1) [out= 0, in = 240] to (2,2.5) ;
\draw[-,dotted,white,thick,fill=white!90!black] (-1,1.5) [out =20, in = 150] to (0.85,1.65) [out=80, in =60] to (-1,1.5);
\node at (-1,1.5) {{\tiny\textbullet}};
\node[below] at (-1,1.5) {{\tiny $x_{0}$}};
\node at (0.85,1.65) {{\tiny\textbullet}};
\node[below] at (0.85,1.65) {{\tiny $y_{0}$}};
\draw[-,dotted] (-1,1.5) [out =35, in = 120] to (0.85,1.65);
\draw[-,fill=black!20!white] (2.5,1)  [out=20,in=270] to (3.5,2.5) [out=-90, in =160] to (4.5,1);
\node at (3.2,1.25) {{\tiny\textbullet}};
\draw[dotted,-] (3.2,1.25) [out=40, in =270] to (3.5,2.1);
\node[right] at (3.2,1.25) {{\tiny $x_{0}$}};
\draw[-,fill=black!20!white] (5.5,2.5) --(5.5,1) -- (7.5,1) -- (7.5,2.5);
\draw[-] (6.5,1) -- (6.5,1.85);
\end{scope}
\end{tikzpicture}
\begin{tikzpicture}[scale=0.45, x={(1cm,0cm)}, y={(0.5cm,0.3cm)}, z={(0cm,1cm)}]

\node[ fill=white, circle ] at (-0.7,0.2) {};

\def\length{4}    
\def\width{1}     
\def\height{1.2}  

\coordinate (A) at (0,0,0);
\coordinate (B) at (\length,0,0);
\coordinate (C) at (\length,\width,0);
\coordinate (D) at (0,\width,0);

\coordinate (E) at (0,0,\height);
\coordinate (F) at (\length,0,\height);
\coordinate (G) at (\length,\width,\height);
\coordinate (H) at (0,\width,\height);

\fill[gray!20] (E) -- (F) -- (G) -- (H) -- cycle; 
\fill[gray!40] (A) -- (B) -- (F) -- (E) -- cycle; 
\fill[gray!30] (B) -- (C) -- (G) -- (F) -- cycle; 

\draw[black] (A) -- (B) -- (F) -- (E) -- cycle;
\draw[black] (B) -- (C) -- (G) -- (F);
\draw[black] (E) -- (H) -- (G);  

\def\topx{\width}     
\def\topy{\length}    
\def\topz{\height}

\def\zshift{\height}
\pgfmathsetmacro{\xoffset}{(\length - \topx)/2}
\pgfmathsetmacro{\yoffset}{(\width - \topy)/2}

\coordinate (A2) at (\xoffset,\yoffset,\zshift);
\coordinate (B2) at (\xoffset,\yoffset+\topy,\zshift);
\coordinate (C2) at (\xoffset+\topx,\yoffset+\topy,\zshift);
\coordinate (D2) at (\xoffset+\topx,\yoffset,\zshift);

\coordinate (E2) at (\xoffset,\yoffset,\zshift+\topz);
\coordinate (F2) at (\xoffset,\yoffset+\topy,\zshift+\topz);
\coordinate (G2) at (\xoffset+\topx,\yoffset+\topy,\zshift+\topz);
\coordinate (H2) at (\xoffset+\topx,\yoffset,\zshift+\topz);

\fill[gray!30] (D2) -- (C2) -- (G2) -- (H2) -- cycle; 
\fill[gray!20] (G2) -- (H2) -- (E2) -- (F2) -- cycle; 
\fill[gray!40] (A2) -- (D2) -- (H2) -- (E2) -- cycle;  

\draw[black] (A2) -- (D2);       
\draw[black] (E2) -- (A2);
\draw[black] (D2) -- (H2);      
\draw[black] (E2) -- (H2);     
\draw[black] (F2) -- (G2);      
\draw[black] (E2) -- (F2);     
\draw[black] (C2) -- (D2);     
\draw[black] (C2) -- (G2);      
\draw[black] (H2) -- (G2);

\end{tikzpicture}
\begin{tikzpicture}[xshift=0.5cm]
\node at (1.75,-1) {(i)};
\node at (5.25,-1) {(ii)};
\node at (8.5,-1) {(iii)};
\node at (11.5,-1) {(iv)};
\end{tikzpicture}
\caption{Conditions on domains. (i) reflects Jones' $(\varepsilon,\delta)$-condition \eqref{eq:Jones}. It is not satisfied in (ii) and (iii); still,  the slit domain from (iii) is John. Similarly, (iv) is a typical instance of a polyhedral non-Lipschitz, yet John domain.}\label{fig:domains}
\end{figure}
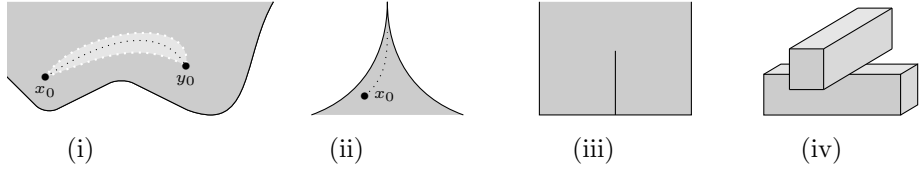
For (bounded) John domains, Korn inequalities have been established in \cite{AcostaDuranMuschietti,DieningRuzickaSchumacher,Kauranen} for the symmetric gradient, and we refer the reader to \cite{DieningGmeineder} for general $\mathbb{C}$-elliptic operators. The general outcome is that, for an open and bounded John domain $\Omega\subset\R^{n}$ and a $k$-th order $\mathbb{C}$-elliptic operator of the form \eqref{eq:form}, we have the estimate 
\begin{align}\label{eq:KornJohn}
\|\mathbf{u}\|_{\sobo^{k,p}(\Omega)}\lesssim \|\mathbf{u}\|_{\lebe^{p}(\Omega)}+\|\mathbb{A}\mathbf{u}\|_{\lebe^{p}(\Omega)}\qquad\text{for all}\;\mathbf{u}\in\sobo^{k,p}(\Omega;V). 
\end{align}
This result persists for other spaces such as certain Orlicz or Muckenhoupt weighted Lebesgue spaces; see \cite[Thm.~6.9, Cor.~6.14]{DieningGmeineder} and the discussion in Section \ref{sec:otherscales} below. 

In the framework of extension domains for $\sobo^{\mathbb{A}}X$, Korn's first inequality can be obtained by easy means provided that the Banach function space $X$ supports Mihlin's theorem:
\begin{assumption}\label{ass:Mihlin}
	Let $X$ be a Banach function space which is definable on all open subsets of $\R^{n}$. We say that $X$ \emph{supports Mihlin's multiplier theorem} if, for any $\mathfrak{m}\in\hold^{\infty}(\R^{n}\setminus\{0\};\mathbb{C})$ which is homogeneous of degree zero, there exists $c>0$ such that the associated Fourier multiplier operator $Tu\coloneqq\mathscr{F}^{-1}(\mathfrak{m}\widehat{u})$ satisfies 
	\begin{align*}
		\|T(u)\|_{X(\R^{n})}\leq c\|u\|_{X(\R^{n})}\qquad\text{for all}\;u\in\hold_{c}^{\infty}(\R^{n};\mathbb{C}). 
	\end{align*}
\end{assumption}
Subject to Assumption \ref{ass:Mihlin}, we then have: 
\begin{corollary}
Let $\mathbb{A}$ be a $k$-th order $\mathbb{C}$-elliptic operator of the form \eqref{eq:form}. Moreover, suppose that $X$ satisfies Assumption \ref{ass:Mihlin} and is such that $\Omega$ is an extension domain for $\sobo^{\mathbb{A}}X$. If $\hold_{c}^{\infty}(\Omega;V)$ is dense in $(\sobo^{\mathbb{A}}X(\R^{n}),\|\cdot\|_{\sobo^{\mathbb{A}}X(\R^{n})})$ and $\|\mathbf{u}\|\coloneqq \|\mathbf{u}\|_{X(\R^{n})}+\|D^{k}\mathbf{u}\|_{X(\R^{n})}$ is an equivalent norm on $\sobo^{k}X(\R^{n})$, then there holds 
\begin{align}\label{eq:Kornonextdomains}
\|\mathbf{u}\|_{\sobo^{k}X(\Omega)}\lesssim \|\mathbf{u}\|_{\sobo^{k-1}X(\Omega)}+\|\mathbb{A}\mathbf{u}\|_{X(\Omega)}\qquad\text{for all}\;\mathbf{u}\in\sobo^{k}X(\Omega;V). 
\end{align}
\end{corollary}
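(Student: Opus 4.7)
\medskip

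\noindent\emph{Proof plan.} The approach is the classical extend-and-multiplier scheme. Given $\mathbf{u}\in\sobo^{k}X(\Omega;V)\subset\sobo^{\mathbb{A}}X(\Omega)$, I first use the hypothesized bounded extension operator to produce $E\mathbf{u}\in\sobo^{\mathbb{A}}X(\R^{n})$ with $\|E\mathbf{u}\|_{\sobo^{\mathbb{A}}X(\R^{n})}\lesssim\|\mathbf{u}\|_{\sobo^{\mathbb{A}}X(\Omega)}$. Since the map $\mathbf{v}\mapsto\|\mathbf{v}\|_{X(\R^{n})}+\|D^{k}\mathbf{v}\|_{X(\R^{n})}$ is, by assumption, an equivalent norm on $\sobo^{k}X(\R^{n})$, it suffices to estimate $\|D^{k}E\mathbf{u}\|_{X(\R^{n})}$ in terms of $\|E\mathbf{u}\|_{\sobo^{\mathbb{A}}X(\R^{n})}$; restricting back to $\Omega$ then yields \eqref{eq:Kornonextdomains}.

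The whole-space estimate I obtain first for $\varphi\in\hold_{c}^{\infty}(\R^{n};V)$ via a Fourier multiplier argument exploiting the $\mathbb{C}$-ellipticity of $\mathbb{A}$. Since $\mathbb{A}[\xi]\colon V+\mathrm{i}V\to W+\mathrm{i}W$ is injective for all $\xi\in\mathbb{C}^{n}\setminus\{0\}$, the Hermitian endomorphism $\mathbb{A}[\xi]^{*}\mathbb{A}[\xi]$ of $V$ is invertible for every real $\xi\neq 0$, and its inverse depends smoothly on $\xi\in\R^{n}\setminus\{0\}$ and is positively homogeneous of degree $-2k$. Consequently, for every multi-index $\alpha$ with $|\alpha|=k$, the matrix-valued symbol
\begin{equation*}
\mathfrak{m}_{\alpha}(\xi)\coloneqq \xi^{\alpha}\bigl(\mathbb{A}[\xi]^{*}\mathbb{A}[\xi]\bigr)^{-1}\mathbb{A}[\xi]^{*}\colon W\to V
\end{equation*}
is $\hold^{\infty}$ on $\R^{n}\setminus\{0\}$ and positively homogeneous of degree zero, and satisfies the identity $\widehat{\partial^{\alpha}\varphi}(\xi)=\mathfrak{m}_{\alpha}(\xi)\,\widehat{\mathbb{A}\varphi}(\xi)$. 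Applying Assumption~\ref{ass:Mihlin} componentwise to the scalar entries of $\mathfrak{m}_{\alpha}$ then gives
\begin{equation*}
\|D^{k}\varphi\|_{X(\R^{n})}\lesssim\|\mathbb{A}\varphi\|_{X(\R^{n})}\qquad\text{for all}\;\varphi\in\hold_{c}^{\infty}(\R^{n};V).
\end{equation*}

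By the density hypothesis, this inequality extends by approximation to every $\mathbf{v}\in\sobo^{\mathbb{A}}X(\R^{n})$. Specialising to $\mathbf{v}=E\mathbf{u}$, adding $\|E\mathbf{u}\|_{X(\R^{n})}$ to both sides, invoking the equivalent-norm hypothesis, and restricting to $\Omega$, one obtains
\begin{equation*}
\|\mathbf{u}\|_{\sobo^{k}X(\Omega)}\leq \|E\mathbf{u}\|_{\sobo^{k}X(\R^{n})}\lesssim \|E\mathbf{u}\|_{X(\R^{n})}+\|\mathbb{A}E\mathbf{u}\|_{X(\R^{n})}\lesssim \|\mathbf{u}\|_{\sobo^{k-1}X(\Omega)}+\|\mathbb{A}\mathbf{u}\|_{X(\Omega)},
\end{equation*}
which is the desired inequality. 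The only substantive analytic point is the construction and verification of the $0$-homogeneous multiplier $\mathfrak{m}_{\alpha}$; here $\mathbb{C}$-ellipticity (as opposed to mere real ellipticity) is not strictly necessary for invertibility of $\mathbb{A}[\xi]^{*}\mathbb{A}[\xi]$ on real $\xi\neq 0$, so the argument in fact only uses real ellipticity in this step. The remaining ingredients, namely extension, approximation, and restriction, are then routine consequences of the stated hypotheses.
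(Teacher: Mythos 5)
Your proof is correct and follows the same extend--multiplier--density--restrict scheme as the paper's: extend via $E$, invert the elliptic symbol $\mathbb{A}[\xi]^{*}\mathbb{A}[\xi]$ to build the degree-zero Fourier multiplier $\mathfrak{m}_{\alpha}$, apply Assumption~\ref{ass:Mihlin}, and pass to general $\mathbf{v}\in\sobo^{\mathbb{A}}X(\R^{n})$ by density. The only (immaterial) differences are that you write the multiplier with the adjoint, $\mathfrak{m}_{\alpha}(\xi)=\xi^{\alpha}(\mathbb{A}[\xi]^{*}\mathbb{A}[\xi])^{-1}\mathbb{A}[\xi]^{*}$ --- which is in fact the correct form for it to act on the $W$-valued $\widehat{\mathbb{A}\mathbf{v}}$, so you have quietly fixed a small typo in the paper's \eqref{eq:rep} --- and that you make explicit the observation (implicit in the paper's phrasing ``in particular elliptic'') that only real ellipticity is used in this step.
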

\begin{proof}
Denote by $E\colon\sobo^{\mathbb{A}}X(\Omega)\to\sobo^{\mathbb{A}}X(\R^{n})$ a bounded linear extension operator. Since $\mathbb{A}$ is $\mathbb{C}$-elliptic, it is in particular elliptic in the sense that $\mathbb{A}[\xi]\colon V\to W$ is injective for all $\xi\in\R^{n}\setminus\{0\}$. Denoting by $\mathbb{A}^{*}[\xi]\colon W'\simeq W\to V\simeq V'$ its adjoint, we have that $\mathbb{A}^{*}[\xi]\mathbb{A}[\xi]\colon V\to V$ is bijective for any $\xi\in\R^{n}\setminus\{0\}$. Hence, we may write for any $\mathbf{v}\in\hold_{c}^{\infty}(\R^{n};V)$ and all $|\alpha|=k$: 
\begin{align}\label{eq:rep}
	\partial^{\alpha}\mathbf{v} = \mathscr{F}^{-1}(\mathfrak{m}_{\alpha}(\xi)\widehat{\mathbb{A}\mathbf{v}}) \coloneqq  \mathscr{F}^{-1}(\xi^{\alpha}(\mathbb{A}[\xi]^{*}\mathbb{A}[\xi])^{-1}\mathbb{A}[\xi]\widehat{\mathbb{A}\mathbf{v}}).
\end{align} 
We note that $\mathfrak{m}_{\alpha}\in\hold^{\infty}(\R^{n}\setminus\{0\};\mathscr{L}(W;V))$ is homogeneous of degree zero, where $\mathscr{L}(W;V)$ denotes the linear operators from $W$ to $V$. The latter is a finite-dimensional space, whereby Assumption \ref{ass:Mihlin} and \eqref{eq:rep}   imply that $\|\partial^{\alpha}\mathbf{v}\|_{X(\R^{n})}\lesssim \|\mathbb{A}\mathbf{v}\|_{X(\R^{n})}$ for all $\mathbf{v}\in\hold_{c}^{\infty}(\R^{n};V)$. 
We use the smooth approximation property to find that this inequality also holds true for all $\mathbf{v}\in\sobo^{\mathbb{A}}X(\R^{n})$. For $\mathbf{u}\in\sobo^{\mathbb{A}}X(\Omega)$, we conclude that 
\begin{align*}
\|\mathbf{u}\|_{\sobo^{k}X(\Omega)} & \leq \|E\mathbf{u}\|_{\sobo^{k}X(\R^{n})} \lesssim \|E\mathbf{u}\|_{X(\R^{n})}+\|D^{k}E\mathbf{u}\|_{X(\R^{n})} \\ 
& \lesssim \|E\mathbf{u}\|_{X(\R^{n})} + \|\mathbb{A}E\mathbf{u}\|_{X(\R^{n})} 
\leq \|E\mathbf{u}\|_{\sobo^{\mathbb{A}}X(\R^{n})} \lesssim \|\mathbf{u}\|_{\sobo^{\mathbb{A}}X(\Omega)}. 
\end{align*}
This is \eqref{eq:Kornonextdomains}, and the proof is complete. 
\end{proof}
In the following, we shall consider specific regularities of domains. However, based on the above summary, the reader might verify whether the underlying approaches also extend to the setting required in the respective application. 
\subsection{Bulk conditions}\label{sec:bulkcond} As a first example, we begin with bulk conditions on a subset $\omega \subset \Omega$. Here, the key point is that $\omega$ might have very small Lebesgue measure compared to $\Omega$. 
\begin{proposition}[Bulk conditions]\label{prop:bulk}
Let $\mathbb{A}$ be a $k$-th order $\mathbb{C}$-elliptic differential operator of the form \eqref{eq:form}. Moreover, given $1<p<\infty$, suppose that the open, bounded and connected set $\Omega\subset\R^{n}$ supports the first Korn inequality \eqref{eq:kornetto0A1} and is such that $\sobo^{k,p}(\Omega)\hookrightarrow\hookrightarrow\lebe^{p}(\Omega)$. Then for any $\mathscr{L}^{n}$-measurable subset $\omega\subset\Omega$ with $\mathscr{L}^{n}(\omega)>0$, there exists a constant $c>0$ such that 
\begin{align}\label{eq:bulk}
\|\mathbf{u}\|_{\sobo^{k,p}(\Omega)}\leq c\Big(\|\mathbf{u}\|_{\lebe^{1}(\omega)} + \|\mathbb{A}\mathbf{u}\|_{\lebe^{p}(\Omega)} \Big)\qquad\text{for all}\;\mathbf{u}\in\sobo^{k,p}(\Omega;V). 
\end{align}
Moreover, writing $c=c(\omega)$, we have $c(\omega)\to\infty$ as $\mathscr{L}^{n}(\omega)\searrow 0$. In particular, if $k=1$, \eqref{eq:bulk} holds for $\mathbb{A}\in\{\nabla^{D},\varepsilon\}$ if $n\geq 2$ and $\mathbb{A}=\sg^{D}$ if $n\geq 3$. 
\end{proposition}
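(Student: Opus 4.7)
The plan is to apply Theorem \ref{thm:variant} with $X=\lebe^{p}(\Omega)$ and with the seminorm $\vertiii{\bu}\coloneqq\|\bu\|_{\lebe^{1}(\omega)}$ on $\sobo^{k,p}(\Omega;V)$. The hypotheses of Theorem~\ref{thm:variant} are met: by assumption $\Omega$ supports \eqref{eq:kornetto0A1}, and we also have the compact embedding $\sobo^{k,p}(\Omega)\hookrightarrow\hookrightarrow\lebe^{p}(\Omega)$. The functional $\vertiii{\cdot}$ is indeed a seminorm on $\sobo^{k,p}(\Omega;V)$: since $\omega$ has finite measure, H\"{o}lder's inequality yields $\|\bu\|_{\lebe^{1}(\omega)}\leq \mathscr{L}^{n}(\omega)^{1-1/p}\|\bu\|_{\lebe^{p}(\Omega)}$.

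The main point to check is condition \emph{\ref{item:Korn1B1}}\ref{item:Korn1aaa}, i.e., that $\vertiii{\cdot}$ is a norm on $\ker(\mathbb{A};\Omega)\cap\sobo^{k,p}(\Omega;V)$. Since $\mathbb{A}$ is $\mathbb{C}$-elliptic, Lemma~\ref{lem:Celliptic} guarantees that this kernel consists of $V$-valued polynomials. If such a polynomial $\bm{\pi}$ satisfies $\|\bm{\pi}\|_{\lebe^{1}(\omega)}=0$, then $\bm{\pi}$ vanishes $\mathscr{L}^{n}$-a.e. on $\omega$. Because $\mathscr{L}^{n}(\omega)>0$ and the zero set of a nonzero polynomial has zero Lebesgue measure (by a standard induction on dimension, or by real-analyticity on the connected set $\Omega$), this forces $\bm{\pi}\equiv 0$. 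Hence $\vertiii{\cdot}$ is a norm on the finite-dimensional kernel, and applying the implication \emph{\ref{item:Korn1B1}}\ref{item:Korn1aaa}$\Rightarrow$\emph{\ref{item:Korn1B1}}\ref{item:Korn2aaa} of Theorem~\ref{thm:variant} gives the estimate \eqref{eq:bulk}.

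For the claim that $c(\omega)\to\infty$ as $\mathscr{L}^{n}(\omega)\searrow 0$, the plan is to test \eqref{eq:bulk} on a fixed nonzero constant $\mathbf{v}_{0}\in V$, viewed as a constant field on $\Omega$. Since $k\geq 1$ and $\mathbb{A}$ is homogeneous of order $k$, we have $\mathbb{A}\mathbf{v}_{0}=0$, so \eqref{eq:bulk} reduces to $|\mathbf{v}_{0}|\mathscr{L}^{n}(\Omega)^{1/p}\leq c(\omega)|\mathbf{v}_{0}|\mathscr{L}^{n}(\omega)$, whence $c(\omega)\geq \mathscr{L}^{n}(\Omega)^{1/p}/\mathscr{L}^{n}(\omega)\to\infty$.

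The main obstacle is essentially cosmetic: verifying that vanishing of a polynomial on a set of positive Lebesgue measure forces the polynomial to be zero. Everything else is immediate from the framework of Theorem~\ref{thm:variant}. Finally, for the concrete cases listed at the end of the statement, the operators $\nabla^{D}$, $\varepsilon$ ($n\geq 2$) and $\varepsilon^{D}$ ($n\geq 3$) are $\mathbb{C}$-elliptic by Example~\ref{ex:diffops}, and \eqref{eq:kornetto0A1} as well as the compact embedding hold on the class of domains under consideration (e.g., John domains) by the results recalled in Section~\ref{sec:validityfirst}, so Proposition~\ref{prop:bulk} applies.
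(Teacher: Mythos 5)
Your proposal is correct and follows the same route as the paper's proof: invoke Theorem~\ref{thm:variant} with $X=\lebe^{p}$ and $\vertiii{\bu}=\|\bu\|_{\lebe^{1}(\omega)}$, use Lemma~\ref{lem:Celliptic} to see that the kernel is a finite-dimensional space of polynomials, and then note that a polynomial vanishing on a set of positive Lebesgue measure vanishes identically. The paper treats the asymptotics of $c(\omega)$ and the concrete operator instances as clear; you filled those in explicitly (testing on constant fields for the blow-up, citing Example~\ref{ex:diffops} and Section~\ref{sec:validityfirst} for the examples), which matches the intended argument.
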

By our discussion from Section \ref{sec:validityfirst}, the hypotheses of the preceding proposition are satisfied for all Lipschitz, $\hold^{k}$-, Jones $(\varepsilon,\delta)$- and John domains. 
\begin{proof}
By Theorem \ref{thm:variant}, we only have to verify that $\vertiii{\mathbf{u}}=\|\mathbf{u}\|_{\lebe^{1}(\omega)}$ is a norm on $\ker(\mathbb{A};\Omega)$. Since $\Omega$ is assumed to be connected, by Lemma~\ref{lem:Celliptic}\ref{item:Cell2} $\ker(\mathbb{A};\Omega)$ consists of polynomials of a fixed maximal degree. Now, if a polynomial vanishes on a set of positive $\mathscr{L}^{n}$-measure, it vanishes globally. Hence, \eqref{eq:bulk} follows from Theorem \ref{thm:variant}. The asymptotics are clear, and the rest follows from Example \ref{ex:diffops}. This completes the proof. 
\end{proof}
\begin{remark}
If, in the situation of Proposition \ref{prop:bulk}, one drops the condition on $\Omega$ to be connected, Corollary \ref{cor:connectedness} implies that $\Omega$ has only finitely many connected components. Then  \eqref{eq:bulk} remains valid if and only if $\omega\subset\Omega$ satisfies $\mathscr{L}^{n}(\omega\cap\Omega_{j})>0$ for each of the connected components $\Omega_{j}$. 
\end{remark}
\subsection{Full boundary trace conditions}
We proceed with seminorms $\vertiii{\cdot}$ defined in terms of the \emph{full} boundary traces on subsets $\Gamma\subset\partial\Omega$. This is opposed to the \emph{partial} boundary traces from the next subsection, where we shall be interested in seminorms defined in terms of normal or tangential parts of the traces. The following result has first been observed by Pompe \cite{Pompe}:  
\begin{proposition}[Full boundary trace conditions]\label{prop:fullbdrytrace}
Let $\mathbb{A}$ be a first order $\mathbb{C}$-elliptic differential operator of the form \eqref{eq:form}. Moreover, let $\Omega\subset\R^{n}$ be open and bounded with Lipschitz boundary, and let $\Gamma\subset\partial\Omega$ be $\mathscr{H}^{n-1}$-measurable such that $\mathscr{H}^{n-1}(\Gamma\cap\partial\Omega_{j})>0$ holds for every connected component $\Omega_{j}$ of $\Omega$. Then, for every $1<p<\infty$,  there exists a constant $c>0$ such that 
\begin{align}\label{eq:fullbdry}
\|\mathbf{u}\|_{\sobo^{1,p}(\Omega)}\leq c\Big( \|\mathrm{tr}_{\partial\Omega}(\mathbf{u})\|_{\lebe^{1}(\Gamma)}+\|\mathbb{A}\mathbf{u}\|_{\lebe^{p}(\Omega)}\Big)\qquad\text{for all}\;\mathbf{u}\in\sobo^{1,p}(\Omega;V),  
\end{align}
where $\mathrm{tr}_{\partial\Omega}\colon\sobo^{1,p}(\Omega;V)\to\lebe^{p}(\partial\Omega;V)$ denotes the usual boundary trace operator on $\sobo^{1,p}(\Omega;V)$. Moreover, writing $c=c(\Gamma)$, we have $c(\Gamma)\to\infty$ as $\mathscr{H}^{n-1}(\Gamma)\searrow 0$. In particular, if $k=1$, \eqref{eq:fullbdry} holds for $\mathbb{A}\in\{\nabla^{D},\varepsilon\}$ if $n\geq 2$ and $\mathbb{A}=\sg^{D}$ if $n\geq 3$. 
\end{proposition}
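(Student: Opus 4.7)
The plan is to derive the proposition by applying Theorem \ref{thm:variant} with $X = Y = \lebe^{p}$. The proof will split into verifying the hypotheses of that theorem and then checking that $\vertiii{\mathbf{u}} := \|\mathrm{tr}_{\partial\Omega}(\mathbf{u})\|_{\lebe^{1}(\Gamma)}$ is a norm on the (finite-dimensional) kernel, from which \eqref{eq:fullbdry} follows immediately.

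\textbf{Setting up Theorem \ref{thm:variant}.} First I will verify the first Korn inequality \eqref{eq:kornetto0A1} and the compact embedding $\sobo^{1,p}(\Omega) \hookrightarrow\hookrightarrow \lebe^{p}(\Omega)$. Both are standard in this setting: the former is available on bounded Lipschitz domains by \eqref{eq:KornJohn}, since Lipschitz domains are John (see Remark \ref{rem:relations}), and the latter is the classical Rellich--Kondrachov theorem. Moreover, the boundary trace $\mathrm{tr}_{\partial\Omega}\colon\sobo^{1,p}(\Omega;V) \to \lebe^{p}(\partial\Omega;V)$ is bounded and linear, whereby $\vertiii{\cdot}$ defines a seminorm on $\sobo^{1,p}(\Omega;V)$. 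Hence the full force of Theorem \ref{thm:variant} becomes available, and condition \ref{item:Korn2aaa} will follow from \ref{item:Korn1aaa}.

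\textbf{The main step: the norm property.} Suppose $\mathbf{u} \in \ker(\mathbb{A};\Omega) \cap \sobo^{1,p}(\Omega;V)$ satisfies $\vertiii{\mathbf{u}} = 0$. By Lemma \ref{lem:Celliptic} together with $\mathbb{C}$-ellipticity, on each connected component $\Omega_{j}$ of $\Omega$, $\mathbf{u}|_{\Omega_{j}}$ extends to a $V$-valued polynomial $\pi_{j}\in\ker(\mathbb{A};\R^{n})$. The assumption $\mathscr{H}^{n-1}(\Gamma \cap \partial\Omega_{j}) > 0$ combined with $\vertiii{\mathbf{u}}=0$ forces $\mathrm{tr}(\pi_{j})$ to vanish on a subset of $\partial\Omega_{j}$ of positive $(n-1)$-dimensional Hausdorff measure. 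The crux will be to deduce that $\pi_{j}\equiv 0$. This is the hardest part, and I expect it to follow from the strong trace rigidity of $\mathbb{C}$-elliptic operators established in \cite{BreitDieningGmeineder}: the zero set of a non-trivial polynomial in $\ker(\mathbb{A};\R^{n})$ for a first-order $\mathbb{C}$-elliptic $\mathbb{A}$ has $\mathscr{H}^{n-1}$-measure zero in $\R^{n}$. For the operators highlighted at the end of the statement, this can be verified by elementary computation using Example \ref{ex:diffops}: for $\nabla^{D}$, the equation $\alpha x + b = 0$ has at most a single solution unless $(\alpha,b)=(0,0)$; for $\varepsilon$, the zero set of $Ax + b$ with non-zero skew $A$ is an affine subspace of codimension $\mathrm{rank}(A) \geq 2$; for $\varepsilon^{D}$ with $n\geq 3$, the corresponding assertion is a classical rigidity property of conformal Killing fields. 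Once $\pi_{j}\equiv 0$ is established on each $\Omega_{j}$, it follows that $\mathbf{u} \equiv 0$, so $\vertiii{\cdot}$ is indeed a norm on $\ker(\mathbb{A};\Omega)\cap\sobo^{1,p}(\Omega;V)$.

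\textbf{Asymptotics of the constant.} Finally, to see that $c(\Gamma)\to\infty$ as $\mathscr{H}^{n-1}(\Gamma)\searrow 0$, I will test \eqref{eq:fullbdry} against any non-zero constant $\mathbf{u}\equiv v_{0}\in V$. Since $\mathbb{A}$ is first-order, $\mathbb{A}v_{0}=0$, and \eqref{eq:fullbdry} then forces the lower bound $c(\Gamma)\geq |v_{0}|\mathscr{L}^{n}(\Omega)^{1/p}/(|v_{0}|\mathscr{H}^{n-1}(\Gamma)) = \mathscr{L}^{n}(\Omega)^{1/p}/\mathscr{H}^{n-1}(\Gamma)\to\infty$, which yields the claimed blow-up.
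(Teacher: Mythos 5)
Your overall strategy is exactly the one the paper uses: feed $X=Y=\lebe^{p}$ and $\vertiii{\bu}=\|\mathrm{tr}_{\partial\Omega}(\bu)\|_{\lebe^{1}(\Gamma)}$ into Theorem~\ref{thm:variant}, reduce to showing that $\vertiii{\cdot}$ is a norm on the finite-dimensional kernel $\ker(\mathbb{A};\Omega)$, and then test against constants to get the blow-up of $c(\Gamma)$. The set-up (first Korn inequality via John/Lipschitz domains, Rellich--Kondrachov, boundedness of the trace, finitely many components) is correct, and so is your asymptotics argument.

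The genuine gap is in the middle step, which is exactly where the substance lies. You write that you ``expect'' the rigidity to follow from \cite{BreitDieningGmeineder}, and you replace the needed fact by a conjectured stronger statement: that the zero set of any non-trivial element of $\ker(\mathbb{A};\R^{n})$ has $\mathscr{H}^{n-1}$-measure zero in $\R^{n}$. Two problems. First, the attribution is wrong: the result the paper relies on is due to Pompe~\cite{Pompe}, namely that if a polynomial $\mathbf{v}\in\ker(\mathbb{A};\R^{n})$ (for a first-order $\mathbb{C}$-elliptic $\mathbb{A}$) vanishes on a subset of positive $\mathscr{H}^{n-1}$-measure of the boundary of a Lipschitz domain, then $\mathbf{v}\equiv 0$; the reference \cite{BreitDieningGmeineder} is about the trace operator on $\mathrm{BV}^{\mathbb{A}}$ and does not supply this rigidity statement as such. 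Second, you neither prove your reformulated claim nor reduce it to a citable result: the naive argument ``the zero set of a non-trivial vector polynomial is an algebraic variety of positive codimension'' only gives $\mathscr{H}^{n}$-measure zero, not $\mathscr{H}^{n-1}$-measure zero, since the zero sets of the $n$ scalar components could all coincide with a single hypersurface. Ruling this out is exactly where $\mathbb{C}$-ellipticity is used, and this step must be proved or correctly cited. Your spot-checks for $\nabla^{D}$ and $\varepsilon$ are fine, but for $\varepsilon^{D}$ you only invoke ``a classical rigidity property of conformal Killing fields'' without argument, and for a general first-order $\mathbb{C}$-elliptic $\mathbb{A}$ you give nothing. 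So the proof, as written, does not close the proposition in the generality stated; replacing the hand-wave by Pompe's theorem would fix it.
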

\begin{proof}
If $\Omega$ is open and bounded with Lipschitz boundary in the sense of Section \ref{sec:validityfirst}, then it has at most finitely many connected components $\Omega_{1},...,\Omega_{N}$. Hence, the validity of Korn's first inequality on $\Omega$ follows. Now let $\mathbf{u}\in\ker(\mathbb{A};\Omega)$ be arbitrary. On each $\Omega_{j}$, $\mathbf{u}$ is the restriction of some $\mathbf{v}\in\ker(\mathbb{A};\R^{n})$. As established by Pompe \cite{Pompe}, $\|\mathbf{v}\|_{\lebe^{1}(\Gamma\cap\partial\Omega_{j})}=0$ implies that $\mathbf{v}\equiv 0$. In conclusion $\vertiii{\mathbf{u}}\coloneqq \|\mathrm{tr}_{\partial\Omega}(\mathbf{v})\|_{\lebe^{1}(\Gamma)}$ is a norm on $\ker(\mathbb{A};\Omega)$, whereby \eqref{eq:fullbdry} follows from Theorem \ref{thm:variant}. The proof is complete. 
\end{proof}

The corresponding trace inequality is substantially easier to obtain if $\Gamma=\partial\Omega$ and holds for a larger class of operators $\mathbb{A}$: 

\begin{remark}[Homogeneous versus inhomogeneous inequalities]\label{rem:hominhom}
If $\Gamma=\partial\Omega$ and $\bm{\rho}\in\ker(\mathbb{A};\Omega)$, then clearly $\bm{\rho}\in\sobo^{1,2}(\Omega;V)$. Now, if $\bm{\rho}|_{\partial\Omega}=0$, then $\bm{\rho}\in\sobo_{0}^{1,2}(\Omega;V)$ and thus the Korn inequality 
\begin{align}\label{eq:blackno1}
\int_{\Omega}|\nabla\mathbf{u}|^{2}\dif x\lesssim \int_{\Omega}|\mathbb{A}\mathbf{u}|^{2}\dif x \qquad\text{for all}\;\mathbf{u}\in\sobo_{0}^{1,2}(\Omega;V) 
\end{align}
implies that $\bm{\rho}=\mathrm{const.}$ and thus $\bm{\rho}=0$ because of $\bm{\rho}\in\sobo_{0}^{1,2}(\Omega;V)$.

We take this remark as an opportunity to comment on weaker ellipticity scenarios. We call a first order operator $\mathbb{A}$ of the form \eqref{eq:form} \emph{$\R$-elliptic} or simply \emph{elliptic} if $\mathbb{A}[\xi]\colon V \to W$ is injective for all $\xi\in\R^{n}\setminus\{0\}$. In this situation, inequality \eqref{eq:blackno1} holds true; more precisely, even for $1<p<\infty$ we have that 
\begin{align}\label{eq:lordpetrussteele}
\|\nabla\mathbf{u}\|_{\lebe^{p}(\Omega)}\lesssim\|\mathbb{A}\mathbf{u}\|_{\lebe^{p}(\Omega)}\qquad\text{for all}\;\mathbf{u}\in\sobo_{0}^{1,p}(\Omega;V).
\end{align}
Now, if $\mathbf{u}\in\sobo^{1,p}(\Omega;V)$, its trace $\mathrm{tr}_{\partial\Omega}(\mathbf{u})$ belongs to $\sobo^{1-1/p,p}(\partial\Omega;V)$. Then, one can find a function $\mathbf{g}\in\sobo^{1,p}(\Omega;V)$ such that 
\begin{align}\label{eq:lovin}
\|\mathbf{g}\|_{\sobo^{1,p}(\Omega)}\lesssim \|\mathrm{tr}_{\partial\Omega}(\mathbf{u})\|_{\sobo^{1-1/p,p}(\partial\Omega)}, 
\end{align}
and the right-hand side of the previous inequality cannot be improved. Since $(\mathbf{u}-\mathbf{g})\in\sobo_{0}^{1,p}(\Omega;V)$, we obtain 
\begin{align}\label{eq:cake}
\begin{split}
\|\nabla\mathbf{u}\|_{\lebe^{p}(\Omega)} & \lesssim \|\nabla(\mathbf{u}-\mathbf{g})\|_{\lebe^{p}(\Omega)} + \|\nabla \mathbf{g}\|_{\lebe^{p}(\Omega)}  \stackrel{\eqref{eq:lordpetrussteele}}{\lesssim} \|\mathbb{A}(\mathbf{u}-\mathbf{g})\|_{\lebe^{p}(\Omega)} + \|\nabla \mathbf{g}\|_{\lebe^{p}(\Omega)} \\ & \lesssim \|\mathbb{A}\mathbf{u}\|_{\lebe^{p}(\Omega)} + \|\nabla\mathbf{g}\|_{\lebe^{p}(\Omega)} \stackrel{\eqref{eq:lovin}}{\lesssim} \|\mathrm{tr}_{\partial\Omega}(\mathbf{u})\|_{\sobo^{1-1/p,p}(\partial\Omega)}+\|\mathbb{A}\mathbf{u}\|_{\lebe^{p}(\Omega)}. 
\end{split}
\end{align}
Since \eqref{eq:lovin} is sharp in general, meaning that its right-hand side cannot be replaced by any strictly weaker norm, this approach only yields a Korn-type inequality involving the $\sobo^{1-1/p,p}$-norm of the boundary traces \emph{but nothing weaker}. Yet, in the context of non-$\mathbb{C}$-elliptic operators, \eqref{eq:cake} is not useful in general. To underline this point, note that we have $\sobo^{1,p}(\Omega;V)=\sobo^{\mathbb{A},p}(\Omega)$ for $\mathbb{C}$-elliptic operators. Thereby \eqref{eq:fullbdry} tells us that, if we control $\mathrm{tr}_{\partial\Omega}(\mathbf{u})$ and $\mathbb{A}\mathbf{u}$, then we control the full Sobolev norm. Now, if $\mathbb{A}$ is not $\mathbb{C}$-elliptic, then there is \emph{no} trace operator $\mathrm{tr}_{\partial\Omega}\colon\sobo^{\mathbb{A},p}(\Omega)\to\sobo^{1-1/p,p}(\partial\Omega;V)$ (not even $\lebe_{\locc}^{1}(\partial\Omega;V)$), see, e.g., \cite[Thm. 4.4]{DieningGmeineder}. In particular, for $\sobo^{\mathbb{A},p}$-maps, the right-hand side of \eqref{eq:cake} is ill-defined. Put differently, there exists a sequence $(\mathbf{u}_{j})\subset\hold^{\infty}(\overline{\Omega};V)$ such that $\sup_{j\in\mathbb{N}}\|\mathbf{u}_{j}\|_{\sobo^{\mathbb{A},p}(\Omega)}<\infty$, but both sides of \eqref{eq:cake} blow up as $j\to\infty$. 
\end{remark}
\begin{remark}
Comparing \eqref{eq:bulk} and \eqref{eq:fullbdry}, the latter does not remain valid in general in the $k$-th order case if the left-hand side of \eqref{eq:fullbdry} is replaced by $\|\mathbf{u}\|_{\sobo^{k,p}(\Omega)}$. Indeed, let us consider $k=3$, $V=\R^{n}$, $\Omega=\ball_{1}(0)$. For $\mathbb{A}=\nabla^{3}$, we  consider $\mathbf{u}(x)\coloneqq (1-|x|^{2})e_{1}$. Then $\mathbf{u}|_{\partial\Omega}=0$ and $\mathbb{A}\mathbf{u}=0$, but the left-hand side of \eqref{eq:fullbdry} does not vanish. Yet, it would be interesting to characterise the algebraic variety $\Sigma_{\pi}\coloneqq \pi^{-1}(\{0\})$ for $\pi\in\ker(\mathbb{A};\R^{n})$ for general $\mathbb{C}$-elliptic operators $\mathbb{A}$. As Pompe's argument shows, at least in the first order case such varieties $\Sigma_{\pi}$ are highly rigid.
\end{remark}
A version of Proposition \ref{prop:fullbdrytrace} for $p=1$ also follows. It, in turn,  generalising a result for $\Gamma=\partial\Omega$ obtained by Steinke \cite{Steinke} who uses an elegant argument of different nature.
\begin{proposition}[Full boundary trace conditions, $p=1$]\label{prop:fullp=1} Retaining the same setting as in Proposition \ref{prop:fullbdrytrace}, there exists a constant $c>0$ such that 
\begin{align}\label{eq:p=1traces}
\|\mathbf{u}\|_{\lebe^{\frac{n}{n-1}}(\Omega)}\leq c\Big(\|\mathrm{tr}_{\partial\Omega}(\mathbf{u})\|_{\lebe^{1}(\Gamma)}+|\mathbb{A}\mathbf{u}|(\Omega)\Big)\qquad\text{for all}\;\mathbf{u}\in\mathrm{BV}^{\mathbb{A}}(\Omega). 
\end{align}

\end{proposition}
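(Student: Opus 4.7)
The strategy is to invoke Proposition \ref{prop:1} with the seminorm $\vertiii{\mathbf{u}}\coloneqq \|\mathrm{tr}_{\partial\Omega}(\mathbf{u})\|_{\lebe^{1}(\Gamma)}$, so that the proof runs in parallel to that of Proposition \ref{prop:fullbdrytrace}, only with Theorem \ref{thm:variant} replaced by its $\lebe^{1}$-analogue. Three points need to be addressed: (i) validity of the hypotheses \eqref{eq:Sobolevp=1}; (ii) well-definedness of $\vertiii{\cdot}$ as a seminorm on $\mathrm{BV}^{\mathbb{A}}(\Omega)$; and (iii) the fact that $\vertiii{\cdot}$ is a norm on $\ker(\mathbb{A};\R^{n})|_{\Omega}$.

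First, since $\Omega$ is open and bounded with Lipschitz boundary, it is in particular a Jones $(\varepsilon,\delta)$-domain (cf. Remark \ref{rem:relations}). The remark following Proposition \ref{prop:1} then gives \eqref{eq:Sobolevp=1} for such $\Omega$. For (ii), since $\mathbb{A}$ is $\mathbb{C}$-elliptic of first order and $\Omega$ is Lipschitz, there is a bounded trace operator
\begin{align*}
\mathrm{tr}_{\partial\Omega}\colon\mathrm{BV}^{\mathbb{A}}(\Omega)\to\lebe^{1}(\partial\Omega;V)
\end{align*}
(cf.\ the trace theory for $\sobo^{\mathbb{A}}\lebe^{1}$ and $\mathrm{BV}^{\mathbb{A}}$ alluded to after Remark \ref{rem:hominhom} and developed in the references cited there). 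Consequently $\vertiii{\cdot}$ is a well-defined bounded seminorm on $\mathrm{BV}^{\mathbb{A}}(\Omega)$.

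For (iii), the verification is exactly as in the proof of Proposition \ref{prop:fullbdrytrace}: since $\Omega$ is Lipschitz it has finitely many connected components $\Omega_{1},\dots,\Omega_{N}$, each with Lipschitz boundary, and by hypothesis $\mathscr{H}^{n-1}(\Gamma\cap\partial\Omega_{j})>0$ for every $j$. Any $\mathbf{v}\in\ker(\mathbb{A};\R^{n})|_{\Omega}$ is the restriction of a $V$-valued polynomial on $\R^{n}$ in view of Lemma \ref{lem:Celliptic}, and on each $\Omega_{j}$ Pompe's rigidity result \cite{Pompe} applies to yield that $\mathrm{tr}_{\partial\Omega_{j}}(\mathbf{v})=0$ on $\Gamma\cap\partial\Omega_{j}$ already forces $\mathbf{v}|_{\Omega_{j}}\equiv 0$. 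Hence $\vertiii{\mathbf{v}}=0$ implies $\mathbf{v}\equiv 0$ on $\Omega$.

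With (i)--(iii) in place, Proposition \ref{prop:1} directly delivers \eqref{eq:p=1traces}. The principal subtlety is (ii): unlike the setting of Proposition \ref{prop:fullbdrytrace}, one cannot invoke the standard $\sobo^{1,p}$-trace operator, and one must instead rely on the $\mathbb{C}$-elliptic trace theory to guarantee that the boundary trace $\mathrm{tr}_{\partial\Omega}$ makes sense on the (strictly larger) space $\mathrm{BV}^{\mathbb{A}}(\Omega)$. Once this is granted, the remainder of the argument is a direct transcription of the proof of Proposition \ref{prop:fullbdrytrace} into the $\lebe^{1}$/BV-framework.
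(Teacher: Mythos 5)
Your proof is correct and follows essentially the same route as the paper: verify \eqref{eq:Sobolevp=1} for Lipschitz $\Omega$, show via Pompe's rigidity that $\|\mathrm{tr}_{\partial\Omega}(\cdot)\|_{\lebe^{1}(\Gamma)}$ is a norm on the kernel restricted to $\Omega$, and apply Proposition \ref{prop:1}. The one thing you make explicit that the paper's terse proof leaves implicit is the well-definedness of $\mathrm{tr}_{\partial\Omega}$ on the strictly larger space $\mathrm{BV}^{\mathbb{A}}(\Omega)$ (relying on the $\mathbb{C}$-elliptic trace theory of \cite{BreitDieningGmeineder}); this is a genuine prerequisite for the seminorm to make sense and is a worthwhile addition.
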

\begin{proof}
Since $\Omega$ is open and bounded with Lipschitz boundary, \eqref{eq:Sobolevp=1} is satisfied; see \cite{DieningGmeineder,GmeinederRaita}. As in the proof of Proposition \ref{prop:fullbdrytrace}, $\vertiii{\mathbf{u}}\coloneqq \|\mathrm{tr}_{\partial\Omega}(\mathbf{v})\|_{\lebe^{1}(\Gamma)}$ is a norm on $\ker(\mathbb{A};\Omega)$, and so \eqref{eq:p=1traces} follows from Proposition \ref{prop:1}. The proof is complete. 
\end{proof}
\subsection{Partial trace conditions}\label{sec:partialboundarytraces}
Different from the previous subsection, we now consider seminorms defined in terms of certain \emph{parts} of the full traces. Korn inequalities with such terms are required when dealing with problems which a priori only give us control over tangential or normal traces; as mentioned in the introduction, the resulting inequalities prove useful in the study of the Navier-Stokes equations with dynamic boundary conditions~\cite{GazcaEtAl,Verfuerth} or the Boltzmann equation~\cite{DesvilletesVillani}. For an open and bounded domain with $\hold^{1}$-boundary oriented by the outer unit normal $\nu_{\partial\Omega}\colon\partial\Omega\to\mathbb{S}^{n-1}$, we put for $\mathbf{u}\in\sobo^{1,p}(\Omega;\R^{n})$
\begin{align*}
& \mathrm{tr}_{\tau,\partial\Omega}(\mathbf{u}) \coloneqq (\mathrm{tr}_{\partial\Omega}(\mathbf{u})\cdot\nu_{\partial\Omega})\nu_{\partial\Omega}&\qquad\text{(normal  trace)}, \\ 
& \mathrm{tr}_{\nu,\partial\Omega}(\mathbf{u}) \coloneqq \mathrm{tr}_{\partial\Omega}(\mathbf{u}) - (\mathrm{tr}_{\partial\Omega}(\mathbf{u})\cdot\nu_{\partial\Omega})\nu_{\partial\Omega}&\qquad\text{(tangential trace).}
\end{align*}
We begin with the deviatoric gradient $\mathbb{A}=\nabla^{D}$, and recall that its kernel is   $\ker(\nabla^D) = \{x \mapsto \alpha x + b \colon \alpha \in \mathbb{R}, b \in \mathbb{R}^n\}$, see Example~\ref{ex:diffops}. Let us start with the following elementary lemma. 

\begin{lemma}[A characterisation of balls]\label{lem:devgradtan}
Let $\Omega\subset\R^{n}$ be an open, bounded and connected set with $\hold^{1}$-boundary. Then the following are equivalent: 
\begin{enumerate}
	\item\label{item:charball1} $\Omega$ is a ball. 
	\item\label{item:charball2} There exist {$\alpha \in\R \backslash \{0\}$} and $b\in\R^{n}$, not both equal to zero, are such that $\nu_{\partial\Omega}(x)$ is \emph{collinear} to $(\alpha x+b)$ for all $x\in\partial\Omega$. 
\end{enumerate}
\end{lemma}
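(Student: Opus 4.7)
The direction (a)$\Rightarrow$(b) is immediate: if $\Omega = \ball_r(c)$, I take $\alpha = 1$ and $b = -c$, so that $\nu_{\partial\Omega}(x) = (x-c)/r$ is a positive scalar multiple of $\alpha x + b = x - c$ for every $x \in \partial\Omega$.

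For the substantive direction (b)$\Rightarrow$(a), the plan is to first reduce to a normalised setting and then exploit the differential geometry of $\partial\Omega$. Translating the independent variable by $y = x + b/\alpha$ and using $\alpha \neq 0$, the assumption becomes $\nu_{\partial\Omega}(x) \parallel x$ for all $x \in \partial\Omega$; in particular $0 \notin \partial\Omega$, since a unit normal cannot be parallel to the zero vector. Then for any $\hold^1$ curve $\gamma : (-\varepsilon,\varepsilon) \to \partial\Omega$, the orthogonality of $\gamma'(t)$ to $\nu_{\partial\Omega}(\gamma(t))$, together with the collinearity $\nu_{\partial\Omega}(\gamma(t))\parallel \gamma(t)$, yields
\[
\frac{\d}{\d t}|\gamma(t)|^{2} = 2\,\gamma(t)\cdot\gamma'(t) = 0,
\]
so $|x|$ is constant on each connected component of $\partial\Omega$.

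Hence each connected component of $\partial\Omega$ is contained in some concentric sphere $\partial \ball_r(0)$. Since both $\partial\Omega$ and $\partial \ball_r(0)$ are $(n-1)$-dimensional $\hold^1$-submanifolds of $\R^n$, a local-graph / invariance-of-domain argument upgrades the inclusion to an equality on each component, so $\partial\Omega$ is a disjoint union of finitely many concentric spheres. To promote this to the statement that $\Omega$ itself is a single ball, I plan to invoke the outward-pointing convention of $\nu_{\partial\Omega}$: reading \emph{collinear} with a consistent sign, the normal must point in the direction of $\alpha x + b$ at every boundary point, which rules out the inner boundary components of any candidate spherical annulus (whose outward normal points toward the origin, i.e., opposite to $x$). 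The main obstacle is exactly this last step: the earlier arguments are standard differential-geometric computations, but the sign consistency encoded in the word \emph{collinear} must be read carefully (or supplemented by a direct topological observation on the outward orientation) in order to exclude nested spherical shells; once this is in place, boundedness and connectedness of $\Omega$ force $\Omega = \ball_{r_1}(-b/\alpha)$.
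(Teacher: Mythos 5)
Your (a)$\Rightarrow$(b) step is the same as the paper's. For (b)$\Rightarrow$(a) you take a genuinely different, and in fact more careful, route. The paper joins two \emph{arbitrary} points $x,y\in\partial\Omega$ by a $C^{1}$ curve $\gamma_{x,y}$ \emph{within} $\partial\Omega$, then uses $\nu\perp\gamma'$ to integrate $\frac{\dif}{\dif r}|\gamma|^{2}$ and conclude $|x+b/\alpha|=|y+b/\alpha|$ for every pair. This needs $\partial\Omega$ to be path-connected, which the paper asserts follows from $\Omega$ being connected together with $C^{1}$-regularity of the boundary; that assertion is false in general, the spherical annulus $\{r_{1}<|x|<r_{2}\}$ being connected with smooth boundary yet disconnected boundary. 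You instead apply the identical orthogonality computation along curves within a \emph{single} boundary component, deduce that $|x|$ is constant on each component after the normalising translation, and upgrade each component to a full sphere. That reduction is correct and cleanly isolates the residual question: why can't $\partial\Omega$ be two concentric spheres?

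You are right to flag that residual step as the delicate point, because it exposes an ambiguity in the lemma itself. The paper's own proof encodes the hypothesis as $\nu(\gamma(r))=f(r)\,(\alpha\gamma(r)+b)/|\alpha\gamma(r)+b|$ with $f(r)\in\{-1,1\}$, i.e.\ it treats ``collinear'' as \emph{parallel up to sign}. Under that reading the annulus centred at the origin, with $\alpha=1$, $b=0$, satisfies (b): on the inner sphere $\nu=-x/r_{1}$ and on the outer sphere $\nu=x/r_{2}$, both parallel to $x$. So with the sign-indifferent reading the implication (b)$\Rightarrow$(a) is simply false, and the paper's proof does not repair this because it never reaches the disconnected-boundary case. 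Your proposed reading---$\nu$ is a \emph{positive} scalar multiple of $\alpha x+b$---does exclude the inner sphere and makes the statement true; under that stronger reading your component-by-component plan is a complete proof and closes the gap that the paper leaves open. (A small side remark: completing the square in the paper's computation gives the centre $-b/\alpha$, not $\frac{2}{\alpha}b$ as written; your translated centre $-b/\alpha$ is the correct one.)
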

\begin{proof}
On '\ref{item:charball1}$\Rightarrow$\ref{item:charball2}'. Suppose that $\Omega=\ball_{r}(x_{0})$ for some $x_{0}\in\R^{n}$ and $r>0$. For an arbitrary $\alpha\in\R$, put $b\coloneqq- \alpha x_{0}$. Then $\bm{\rho}(x)=\alpha x+b = \alpha(x- x_0)$ is collinear to 
\begin{align*}
\nu_{\partial\Omega}(x) = 
\frac{x-x_{0}}{|x-x_{0}|}
\qquad\text{for all}\;x\in\partial\Omega, 
\end{align*}
and hence~\ref{item:charball2} follows. 

On '\ref{item:charball2}$\Rightarrow$\ref{item:charball1}'.
Now suppose that $\alpha \in\R \backslash \{0\}$ and $b\in\R^{n}$ are such that \ref{item:charball2} holds. For arbitrary $x,y\in\partial\Omega$ with $x\neq y$, we let $
\gamma_{x,y} 
\colon[0,1]\to\partial\Omega$ be a continuously differentiable curve with $\gamma_{x,y}(0)=x$ and $\gamma_{x,y}(1)=y$; this is precisely where we use that $\partial\Omega$ is of class $\hold^{1}$ and that $\Omega$ is connected. By the collinearity assumption from \ref{item:charball2}, there is a function $f \colon [0,1] \to \{-1,1\}$ such that 
\begin{align}\label{eq:nugamma}
\nu_{\partial\Omega}(\gamma_{x,y}(r))=f(r)\frac{\alpha \gamma_{x,y}(r)+b}{|\alpha \gamma_{x,y}(r)+b|}
\qquad\text{for all}\;0\leq r \leq 1, 
\end{align}
On the other hand, $\nu_{\partial\Omega}(\gamma_{x,y}(r))\bot\gamma'_{x,y}(r)$ for all $0\leq r \leq 1$, and hence  \eqref{eq:nugamma} yields 
\begin{align}\label{eq:spherecomp2}
0=( \alpha \gamma_{x,y}(r)+b)\cdot\gamma'_{x,y}(r) 
= \alpha \gamma_{x,y}(r)\cdot\gamma'_{x,y}(r) + b\cdot\gamma'_{x,y}(r)\qquad\text{for all}\;0\leq r \leq 1. 
\end{align}
In consequence, the fundamental theorem of calculus gives us 
\begin{align}\label{eq:spherecomp}
	\begin{split}
\frac{\alpha}{2}(|x|^{2}-|y|^{2}) & = \frac{\alpha}{2}(|\gamma_{x,y}(1)|^{2}-|\gamma_{x,y}(0)|^{2}) \\ & = \frac{\alpha}{2}\int_{0}^{1}\frac{\dif}{\dif r}|\gamma_{x,y}(r)|^{2}\dif r  = \int_{0}^{1}\alpha \gamma'_{x,y}(r)\cdot\gamma_{x,y}(r)\dif r \\ 
& \!\!\! \stackrel{\eqref{eq:spherecomp2}}{=} - b\cdot \int_{0}^{1}\gamma'_{x,y}(r)\dif r = -b\cdot (\gamma_{x,y}(1)-\gamma_{x,y}(0)) = -b\cdot(x-y). 
\end{split}
\end{align}
Completing the square, \eqref{eq:spherecomp} implies that 
\begin{align*}
\left\vert x + \frac{2}{\alpha}b\right\vert^2 = \left\vert y + \frac{2}{\alpha}b\right\vert^2 \qquad\text{for all}\;x,y\in\partial\Omega\;\text{with}\;x\neq y. 
\end{align*}
This shows that $\partial\Omega$ is a sphere centred at $\frac{2}{\alpha}b$ and so \ref{item:charball1} follows. The proof is complete. 
\end{proof}
The previous proof formalises the fact that, for a field $\bm{\rho}(x)=ax+b$, it is impossible to be globally orthogonal to the tangents $\tau_{\partial\Omega}$ along $\partial\Omega$ unless $\partial\Omega$ is a sphere; see Figure \ref{fig:sphere}(i). Here, an instance of a set and boundary point $y_{0}$ is depicted where this condition fails. We now have: 
\begin{proposition}[Partial traces, deviatoric gradient]\label{prop:devgradpartial}
Let $1<p<\infty$, and let  $\Omega\subset\R^{n}$ be open, bounded with $\hold^{1}$-boundary. Then the following hold: 
\begin{enumerate}
\item\label{item:partial-devgrad-tan} The inequality 
	\begin{align}\label{eq:tanbdrydevgrad}
		\|\mathbf{u}\|_{\sobo^{1,p}(\Omega)}\lesssim \int_{\partial\Omega}|\mathrm{tr}_{\tau,\partial\Omega}(\mathbf{u})|\dif\mathscr{H}^{n-1} + \|\nabla^{D}\mathbf{u}\|_{\lebe^{p}(\Omega)}\qquad\text{for all}\;\mathbf{u}\in\sobo^{1,p}(\Omega;\R^{n})
	\end{align}
	holds if and only no connected component of $\Omega$ is a ball. In particular, if $\Omega$ is connected, then the validity of \eqref{eq:tanbdrydevgrad} is equivalent to $\Omega$ not being a ball.  
\item\label{item:partial-devgrad-nu}  The inequality 
\begin{align}\label{eq:normbdrydevgrad}
		\|\mathbf{u}\|_{\sobo^{1,p}(\Omega)}\lesssim \int_{\partial\Omega}|\mathrm{tr}_{\nu,\partial\Omega}(\mathbf{u})|\dif\mathscr{H}^{n-1} + \|\nabla^{D}\mathbf{u}\|_{\lebe^{p}(\Omega)}\qquad\text{for all}\;\mathbf{u}\in\sobo^{1,p}(\Omega;\R^{n})
	\end{align}
    holds without further restrictions on $\Omega$. 
\end{enumerate}
\end{proposition}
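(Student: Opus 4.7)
The plan is to apply Theorem~\ref{thm:variant} to $\mathbb{A}=\nabla^{D}$ with $X=\lebe^{p}$ in both cases. Since $\Omega$ is bounded with $\hold^{1}$-boundary, it consists of finitely many connected components $\Omega_{1},\dots,\Omega_{N}$, each Lipschitz. The first Korn inequality~\eqref{eq:kornetto0A1} for $\nabla^{D}$ holds on each component by~\eqref{eq:KornJohn}, and the compact embedding $\sobo^{1,p}(\Omega)\hookrightarrow\hookrightarrow\lebe^{p}(\Omega)$ is the Rellich theorem. Hence Theorem~\ref{thm:variant} reduces each of~\eqref{eq:tanbdrydevgrad} and~\eqref{eq:normbdrydevgrad} to the question whether the corresponding seminorm $\vertiii{\cdot}$ is a norm on $\ker(\nabla^{D};\Omega)\cap\sobo^{1,p}(\Omega;\R^{n})$. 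By~\eqref{def:ker-devgrad}, any element of this kernel restricts on each $\Omega_{j}$ to $\bm{\rho}_{j}(x)=\alpha_{j} x+b_{j}$ with $\alpha_{j}\in\R$, $b_{j}\in\R^{n}$, so the analysis factors over connected components.

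For~\ref{item:partial-devgrad-tan}, the seminorm $\vertiii{\bm{\rho}}$ vanishes on $\partial\Omega_{j}$ exactly when $\bm{\rho}_{j}(x)$ is everywhere collinear with $\nu_{\partial\Omega_{j}}(x)$. If $\alpha_{j}\neq 0$, Lemma~\ref{lem:devgradtan} identifies this as the condition that $\Omega_{j}$ is a ball. If instead $\alpha_{j}=0$, then $\bm{\rho}_{j}\equiv b_{j}$ must be collinear with $\nu_{\partial\Omega_{j}}(x)$ for every $x\in\partial\Omega_{j}$; since the Gauss map of a bounded $\hold^{1}$-domain attains every direction in $\mathbb{S}^{n-1}$ (each unit vector occurs as the outer normal at a supporting boundary point), this forces $b_{j}=0$. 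Consequently, a nontrivial $\bm{\rho}\in\ker(\nabla^{D};\Omega)$ with $\vertiii{\bm{\rho}}=0$ exists iff some $\Omega_{j}$ is a ball, the witness being the centred radial field on that ball extended by zero to the other components. Theorem~\ref{thm:variant} then yields~\ref{item:partial-devgrad-tan}.

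For~\ref{item:partial-devgrad-nu}, vanishing of $\vertiii{\bm{\rho}}$ forces $\bm{\rho}_{j}\cdot\nu_{\partial\Omega_{j}}\equiv 0$ on $\partial\Omega_{j}$. The divergence theorem applied to $\bm{\rho}_{j}=\alpha_{j}x+b_{j}$ gives
\[
n\alpha_{j}\,\mathscr{L}^{n}(\Omega_{j})=\int_{\Omega_{j}}\mathrm{div}(\bm{\rho}_{j})\dif x=\int_{\partial\Omega_{j}}\bm{\rho}_{j}\cdot\nu_{\partial\Omega_{j}}\dif\mathscr{H}^{n-1}=0,
\]
hence $\alpha_{j}=0$. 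Then $\bm{\rho}_{j}\equiv b_{j}$ and the condition $b_{j}\cdot\nu_{\partial\Omega_{j}}(x)=0$ for all $x\in\partial\Omega_{j}$ forces $b_{j}=0$ by the same surjectivity of the Gauss map. Thus $\vertiii{\cdot}$ is automatically a norm on $\ker(\nabla^{D};\Omega)\cap\sobo^{1,p}(\Omega;\R^{n})$, and Theorem~\ref{thm:variant} delivers~\eqref{eq:normbdrydevgrad} for every such $\Omega$.

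There is no genuine obstacle beyond Lemma~\ref{lem:devgradtan}, which encodes the sphere rigidity required for~\ref{item:partial-devgrad-tan}; the remaining ingredients, namely the divergence theorem and the surjectivity of the Gauss map for bounded $\hold^{1}$-domains, are elementary.
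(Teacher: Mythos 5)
Your proof is correct, and for part \ref{item:partial-devgrad-nu} it takes a genuinely different and arguably cleaner route than the paper. For part \ref{item:partial-devgrad-tan} you follow the same strategy (reduction via Theorem~\ref{thm:variant}, Lemma~\ref{lem:devgradtan} for the ball characterisation, and the indicator-function witness $\mathbbm{1}_{\ball_r(x_0)}\bm\rho$ for the ``only if'' direction), but you are actually slightly more careful than the paper: Lemma~\ref{lem:devgradtan} is stated and proved only for $\alpha\neq 0$, so the constant case $\alpha_j=0$, $b_j\neq 0$ needs to be excluded separately, and your surjectivity-of-the-Gauss-map argument does exactly that --- the paper tacitly glosses over this case. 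For part \ref{item:partial-devgrad-nu}, the paper argues by a geometric sketch: if a nontrivial $\bm\rho=\alpha x+b$ has vanishing normal trace, then $\bm\rho$ is tangential to $\partial\Omega$, and following its integral curves (rays from $-b/\alpha$, or lines if $\alpha=0$) would force $\partial\Omega$ to be unbounded. Your argument instead applies the divergence theorem to kill $\alpha_j$ directly via $n\alpha_j\,\mathscr{L}^n(\Omega_j)=\int_{\partial\Omega_j}\bm\rho_j\cdot\nu_{\partial\Omega_j}\,\dif\mathscr{H}^{n-1}=0$, and then handles the constant remainder by Gauss-map surjectivity. This replaces an ODE/flow heuristic with a one-line integral identity and is more self-contained; the paper's version, in contrast, makes the geometric obstruction visually transparent (Figure~\ref{fig:sphere}(ii)). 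Both are valid, but your divergence-theorem proof is cleaner as a rigorous argument.
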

\begin{proof}
Our {definition} of open and bounded sets with $\hold^{1}$-boundaries implies that $\Omega$ has at most finitely many connected components $\Omega_{1},...,\Omega_{N}$. In particular, \eqref{eq:KornJohn} in conjunction with Remark~\ref{rem:relations} implies the validity of the first Korn inequality for $\mathbb{A}=\nabla^{D}$ on such sets. 

\begin{figure}[t]
\begin{tikzpicture}
\node at (0,0) {\textbullet};
\node[right] at (0,-0.25) {$x_{0}$};
\draw[->,dashed] (0,2) -- (-2,2);
\draw[->,dashed,rotate=10] (0,2) -- (-2,2);
\draw[->,dashed,rotate=20] (0,2) -- (-2,2);
\draw[->,dashed,rotate=30] (0,2) -- (-2,2);
\draw[->,dashed,rotate=40] (0,2) -- (-2,2);
\draw[->,dashed,rotate=50] (0,2) -- (-2,2);
\draw[->,dashed,rotate=60] (0,2) -- (-2,2);
\draw[->,dashed,rotate=70] (0,2) -- (-2,2);
\draw[->,dashed,rotate=80] (0,2) -- (-2,2);
\draw[->,dashed,rotate=90] (0,2) -- (-2,2);
\node[rotate=-30] at (-1.35,-2) {$.....$};
\draw[<->,dotted] (0,-2) -- (0,2);
\draw[<->,dotted,rotate=10] (0,-2) -- (0,2);
\draw[<->,dotted,rotate=20] (0,-2) -- (0,2);
\draw[<->,dotted,rotate=30] (0,-2) -- (0,2);
\draw[<->,dotted,rotate=40] (0,-2) -- (0,2);
\draw[<->,dotted,rotate=50] (0,-2) -- (0,2);
\draw[<->,dotted,rotate=60] (0,-2) -- (0,2);
\draw[<->,dotted,rotate=70] (0,-2) -- (0,2);
\draw[<->,dotted,rotate=80] (0,-2) -- (0,2);
\draw[<->,dotted,rotate=90] (0,-2) -- (0,2);
\draw[<->,dotted,rotate=100] (0,-2) -- (0,2);
\draw[<->,dotted,rotate=110] (0,-2) -- (0,2);
\draw[<->,dotted,rotate=120] (0,-2) -- (0,2);
\draw[<->,dotted,rotate=130] (0,-2) -- (0,2);
\draw[<->,dotted,rotate=140] (0,-2) -- (0,2);
\draw[<->,dotted,rotate=150] (0,-2) -- (0,2);
\draw[<->,dotted,rotate=160] (0,-2) -- (0,2);
\draw[<->,dotted,rotate=170] (0,-2) -- (0,2);
\draw[<->,dotted,rotate=180] (0,-2) -- (0,2);
\node at (-1.1,2.3) {$\tau_{\partial \Omega}$};
\draw (0,0) circle (2cm);
\draw[-,rounded corners=3mm,fill=black!30!white,opacity=0.2] (-1,-1) -- (0,-0.5) -- (1,-1) -- (0.5,0) -- (1,1) -- (0,0.5) -- (-1,1) -- (-0.5,0) [out=-110, in = 180] to (-1,-1);
\node at (-0.45,0.725) {{\tiny\textbullet}};
\node[below] at (-0.45,0.725) {{$y_{0}$}};
\draw[dashed,->]  (-0.45,0.725) --(-1.3,1.2);
\node at (0,-2.5) {(i)};
\end{tikzpicture}
\hspace{1cm}
\begin{tikzpicture}
\node at (0,0) {\textbullet};
\node[right] at (0,-0.25) {$x_{0}$};
\draw[<->,dotted] (0,-2) -- (0,2);
\draw[<->,dotted,rotate=10] (0,-2) -- (0,2);
\draw[<->,dotted,rotate=20] (0,-2) -- (0,2);
\draw[<->,dotted,rotate=30] (0,-2) -- (0,2);
\draw[<->,dotted,rotate=40] (0,-2) -- (0,2);
\draw[<->,dotted,rotate=50] (0,-2) -- (0,2);
\draw[<->,dotted,rotate=60] (0,-2) -- (0,2);
\draw[<->,dotted,rotate=70] (0,-2) -- (0,2);
\draw[<->,dotted,rotate=80] (0,-2) -- (0,2);
\draw[<->,dotted,rotate=90] (0,-2) -- (0,2);
\draw[<->,dotted,rotate=100] (0,-2) -- (0,2);
\draw[<->,dotted,rotate=110] (0,-2) -- (0,2);
\draw[<->,dotted,rotate=120] (0,-2) -- (0,2);
\draw[<->,dotted,rotate=130] (0,-2) -- (0,2);
\draw[<->,dotted,rotate=140] (0,-2) -- (0,2);
\draw[<->,dotted,rotate=150] (0,-2) -- (0,2);
\draw[<->,dotted,rotate=160] (0,-2) -- (0,2);
\draw[<->,dotted,rotate=170] (0,-2) -- (0,2);
\draw[<->,dotted,rotate=180] (0,-2) -- (0,2);
\draw[black!30!white,fill=black!30!white,opacity=0.2] (0,0) -- (2.5,2.1115) -- (2.5,-2.1115) -- (0,0);
\node at (2.5,2.1115) {\tiny{\textbullet}};
\node[right] at (2.5,2.1115) {$z_{0}$};
\node at (2.5,2.1115) {\tiny{\textbullet}};
\node at (2.5,-2.1115) {\tiny{\textbullet}};
\node[right] at (2.5,-2.1115) {$z'_{0}$};
\node at (0,-2.5) {(ii)};
\end{tikzpicture}
\caption{Critical geometric scenarios for the deviatoric gradient; the finely dotted arrows indicating the vector field of kernel element $\alpha x+b$.}
\label{fig:sphere}
\end{figure}
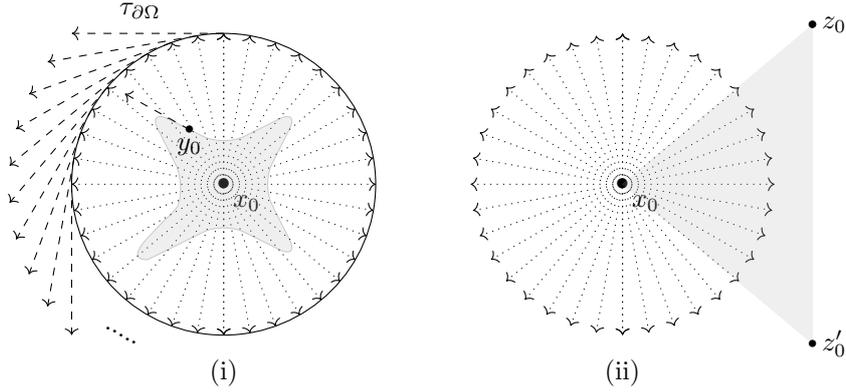

On \ref{item:partial-devgrad-tan}.  Suppose that \eqref{eq:tanbdrydevgrad} holds, and suppose that there exists $j_{0}\in\{1,...,N\}$ such that $\Omega_{j_{0}}$ is a ball: $\Omega_{j_{0}}=\ball_{r}(x_{0})$. By Lemma \ref{lem:devgradtan}, there exist $a\in\R$ and $b\in\R^{n}$, not both equal to zero, such that $\nu_{\partial\Omega_{j_{0}}}(x)$ is collinear to $\bm{\rho}(x)= \alpha x+b$ for all $x\in\partial\Omega_{j_{0}}$. We put 
\begin{align*}
\widetilde{\bm{\rho}}(x)\coloneqq\mathbbm{1}_{\ball_{r}(x_{0})}\bm{\rho}(x),\qquad x\in\Omega. 
\end{align*}
Setting $\mathbf{u}\coloneqq\widetilde{\bm{\rho}}$, then the right-hand side of \eqref{eq:tanbdrydevgrad} vanishes, whereas the left-hand side does not. This is a contradiction, and hence no connected component of $\Omega$ can be a ball. Conversely, suppose that no connected component of $\Omega$ is a ball. Lemma \ref{lem:devgradtan} then tells us that  
\begin{align}\label{eq:devgradproofseminorm}
\vertiii{\mathbf{u}}\coloneqq\int_{\partial\Omega_{j}}|\mathrm{tr}_{\tau,\partial\Omega_{j}}(\mathbf{u})|\dif\mathscr{H}^{n-1} 
\end{align}
is a norm on $\ker(\nabla^{D};\Omega_{j})$ for every $j\in\{1,...,N\}$. Hence, Theorem \ref{thm:variant} yields that \eqref{eq:tanbdrydevgrad} (with $\Omega$ being replaced by $\Omega_{j}$) holds for every $j\in\{1,...,N\}$. Adding the resulting inequalities then gives us \eqref{eq:tanbdrydevgrad} globally on $\Omega$, which is \ref{item:partial-devgrad-tan}.

On \ref{item:partial-devgrad-nu} . We consider the seminorm $\vertiii{\cdot}$ as in \eqref{eq:devgradproofseminorm}, where now $\partial\Omega_{j}$ is replaced by $\partial\Omega$ and $\mathrm{tr}_{\tau,\partial\Omega_{j}}$ is replaced by $\mathrm{tr}_{\nu,\partial\Omega}$. Then $\vertiii{\cdot}$ is a norm on the space of maps $\bm{\rho}(x)= \alpha x+b$, for $\alpha \in \mathbb{R}$ and $b \in \mathbb{R}^n$. To see this, we confine ourselves to a sketch. If there exists such a non-trivial map $\bm{\rho}$ such that $\vertiii{\bm{\rho}}=0$, then the boundary must contain at least one line of the form $x_{0}+\R_{\geq 0}{w}_0$ with some $x_{0}\in\partial\Omega$ and ${w}_0 \in\R^{n}\setminus\{0\}$. If it contains one  such line, then $\Omega$ cannot be bounded, see Figure \ref{fig:sphere}(ii). 
Hence, $\vertiii{\cdot}$ is a seminorm, and so \eqref{eq:normbdrydevgrad} follows from Theorem \ref{thm:variant}. The proof is complete. 
\end{proof}
We expect the 'if-and-only-if'-result of the preceding proposition to hold for a class of more irregular domains.

Next, let us turn to the symmetric gradient with kernel $ \ker(\varepsilon) = \mathscr{R}(\R^{n}) = \{x \mapsto Ax+b\colon\;A\in\R_{\mathrm{skew}}^{n\times n},\,b\in\R^{n}\} $, see Example~\ref{ex:diffops}\ref{item:symgrad}. 
The following proposition extends 
results in~\cite{DesvilletesVillani} on homogeneous normal trace conditions. 

\begin{example}[Partial traces, symmetric gradient]\label{prop:partialtracesymgrad}
Let $1<p<\infty$, and let  $\Omega\subset\R^{n}$ be open, bounded with $\hold^{1}$-boundary. Then the following hold: 
\begin{enumerate}
\item\label{item:partial-symgrad-tan} The inequality 
	\begin{align}\label{eq:tanbdrysymgrad}
		\|\mathbf{u}\|_{\sobo^{1,p}(\Omega)}\lesssim \int_{\partial\Omega}|\mathrm{tr}_{\tau,\partial\Omega}(\mathbf{u})|\dif\mathscr{H}^{n-1} + \|\sg (\mathbf{u})\|_{\lebe^{p}(\Omega)}\qquad\text{for all}\;\mathbf{u}\in\sobo^{1,p}(\Omega;\R^{n})
	\end{align}
	holds without further restrictions on $\Omega$. 
\item\label{item:partial-symgrad-nu}  The inequality 
\begin{align}\label{eq:normbdrysymgrad}
		\|\mathbf{u}\|_{\sobo^{1,p}(\Omega)}\lesssim \int_{\partial\Omega}|\mathrm{tr}_{\nu,\partial\Omega}(\mathbf{u})|\dif\mathscr{H}^{n-1} + \|\sg (\mathbf{u})\|_{\lebe^{p}(\Omega)}\qquad\text{for all}\;\mathbf{u}\in\sobo^{1,p}(\Omega;\R^{n})
	\end{align}
    holds provided that $\Omega$ is not axisymmetric. 
\end{enumerate}
     To show~\ref{item:partial-symgrad-tan}, a similar argument as in~Proposition~\ref{prop:devgradpartial}\ref{item:partial-devgrad-nu} can be applied.  
     Indeed,  any $\hold^{1}$-domain whose boundary tangent vectors are orthogonal to rigid body rotations is unbounded. 
     
     With the arguments used in \cite{DesvilletesVillani} one can show that $\vertiii{\bu} = \int_{\partial\Omega}|\mathrm{tr}_{\nu,\partial\Omega}(\mathbf{u})|\dif\mathscr{H}^{n-1}$ is a norm on $\ker(\sg)$ if and only if $\Omega$ is not axisymmetric. This proves \ref{item:partial-symgrad-nu}. 
\end{example}

\begin{example}[Deviatoric symmetric gradients]\label{ex:devsymgradcomplications}
Determining the domains such that 
normal trace or tangential trace terms are a norm on $\ker(\sg^D)$ is significantly harder, recall Example~\ref{ex:diffops}\ref{item:devsymgrad}. 
Clearly, for those domains, for which those terms are not a norm on the kernel $\sg$, are also excluded here, since by Example~\ref{ex:diffops} we have 
\begin{align*}
         \ker(\sg)
 \subset  \ker(\sg^D).
\end{align*}
Beyond this, however, we cannot assert much: While the kernel of $\sg^D$ is a subspace of quadratic functions, the condition that a kernel element satisfies zero normal or zero tangential trace conditions can be reformulated as a PDE; going from full trace to partial trace conditions thus comes with a switch from an  algebraic problem to an analytic one.
\end{example}

\begin{remark}[Partial traces on part of the boundary]
Similar to Proposition~\ref{prop:fullbdrytrace}, it is also of interest to impose partial boundary conditions only on a part of the boundary $\Gamma \subset \partial \Omega$. 
It is evident that the class of domains for which the corresponding seminorm $\vertiii{\cdot}$ is also a norm on the kernel of the respective differential operator is possibly larger than for $\Gamma = \partial \Omega$. 
\end{remark}
Based on Example \ref{ex:devsymgradcomplications}, it is challenging if not impossible to obtain explicit characterisations of all  available couplings between the partial boundary conditions, differential operators and the geometry of domains to yield the corresponding Korn-type inequalities. For readers interested in this matter, Section \ref{sec:sym-comput} provides a symbolic test to decide whether a seminorm can qualify as a norm on the nullspace of $\mathbb{A}$ for given  partial trace conditions. 
\subsection{Other space scales}\label{sec:otherscales}
For problems from material science with, e.g., logarithmic hardening \cite{FuchsSeregin} or limiting cases in fluid mechanics such as the Prandtl-Eyring model \cite{BreitDieningFuchs},  $\lebe^{p}$-based Korn inequalities do not suffice. Korn inequalities of Orlicz-type have been obtained in \cite{BreitCianchiDiening,BreitDiening,Cianchi,DieningRuzickaSchumacher,FuchsOrlicz} for specific operators such as $\sg$ or $\sg^{D}$, and in \cite{ContiGmeineder,DieningGmeineder,Stephan} in the general case; most of them crucially hinge on Cianchi's sharp singular integral estimates \cite{Cianchi1}. 

To state them, we recall some terminology. A function $\varphi\colon [0,\infty)\to[0,\infty]$ is called a \emph{Young function} if it can be written as
\begin{align*}
\varphi(t) = \int_{0}^{t}\psi(s)\dif s,\qquad t\geq 0, 
\end{align*}
where $\psi\colon[0,\infty)\to[0,\infty]$ is non-decreasing, left-continuous and neither identical to $0$ nor to $+\infty$. Its \emph{convex conjugate} is given by $\varphi^{*}(t)\coloneqq\sup_{s>0}st-\varphi(s)$. A Young function gives rise to the Luxemburg norm 
\begin{align*}
\|u\|_{\lebe^{\varphi}(\Omega)} \coloneqq \inf\Big\{\lambda>0\colon\;\int_{\Omega}\varphi\Big(\frac{|u|}{\lambda}\Big)\dif x \leq 1 \Big\},  
\end{align*}
and the Orlicz space $\lebe^{\varphi}(\Omega)$ is defined as the collection of all measurable $u\colon\Omega\to\R$ such that $\|u\|_{\lebe^{\varphi}(\Omega)}<\infty$. In particular, $\lebe^{\varphi}(\Omega)$ is a Banach function space. For two Young functions $\varphi,\widetilde{\varphi}$, we shall consider the following balance condition: There exist $c>0$ and $t_{0}\geq 0$ such that 
\begin{align}\label{eq:Orliczbalance}
t\int_{t_{0}}^{t}\frac{\varphi(s)}{s^{2}}\dif s<\widetilde{\varphi}(ct)\;\;\;\text{and}\;\;\;t\int_{t_{0}}^{t}\frac{\widetilde{\varphi}^{*}(s)}{s^{2}}\dif s \leq \varphi^{*}(ct)\qquad\text{for all}\;t\geq t_{0}. 
\end{align}
\begin{example}
As mentioned above, plasticity models with logarithmic hardening or the Prandtl-Eyring model from fluid mechanics are based on the space $\mathrm{L}\log\mathrm{L}(\Omega)$, which corresponds to the Young function $\widetilde{\varphi}(t)\coloneqq t\log(t+1)$. Then $\widetilde{\varphi}^{*}$ is of exponential growth. We moreover put $\varphi(t)\coloneqq t$, so that
\begin{align*}
\varphi^{*}(t)= \begin{cases}
0&\;0\leq t \leq 1,\\ 
+\infty&\;1<t<\infty. 
\end{cases}
\end{align*}
For these choices, a straightforward computation shows that \eqref{eq:Orliczbalance} is satisfied. 
\end{example}
Now let $\mathbb{A}$ be a first order $\mathbb{C}$-elliptic differential operator of the form \eqref{eq:form} and let  $\varphi,\widetilde{\varphi}$ be of class $\Delta_{2}$; this means that there exists $c>0$ such that $\varphi(2t)\leq c\varphi(t)$ holds for all $t\geq 0$, and analogously for $\widetilde{\varphi}$. Subject to the above balance condition \eqref{eq:Orliczbalance}, we then have the Korn-type inequality 
\begin{align}\label{eq:DerKorner}
\|\mathbf{u}\|_{\sobo^{1}\lebe^{\varphi}(\Omega)}\lesssim\|\mathbf{u}\|_{\lebe^{\widetilde{\varphi}}(\Omega)}+\|\mathbb{A}\mathbf{u}\|_{\lebe^{\widetilde{\varphi}}(\Omega)}\qquad\text{for all}\;\mathbf{u}\in\sobo^{1}\lebe^{\varphi}(\Omega;V),
\end{align}
provided that $\Omega$ is an open and bounded  $\sobo^{\mathbb{A}}\lebe^{\widetilde{\varphi}}$-extension domain; this follows from the results and techniques of \cite{Cianchi,Cianchi1} and \cite{GmeinederRaita}. In particular, this is satisfied if $\Omega$ has Lipschitz boundary. 
\begin{proposition}[Orlicz spaces]\label{prop:Orlicz}
Let $\Omega\subset\R^{n}$ be open, connected and bounded  with Lipschitz boundary, and suppose that the Young functions $\varphi,\widetilde{\varphi}$ are of class $\Delta_{2}$ and satisfy the balance condition \eqref{eq:Orliczbalance}. Moreover, let $\mathbb{A}$ be a first order $\mathbb{C}$-elliptic operator of the form \eqref{eq:form}. Then the following hold: 
\begin{enumerate}
\item\label{item:Orlicz1} For any $\mathscr{L}^{n}$-measurable set $\omega\subset\Omega$ with $\mathscr{L}^{n}(\omega)>0$, we have 
\begin{align*}
\|\mathbf{u}\|_{\sobo^{1}\lebe^{\varphi}(\Omega)} \lesssim \|\mathbf{u}\|_{\lebe^{1}(\omega)} + \|\mathbb{A}\mathbf{u}\|_{\lebe^{\widetilde{\varphi}}(\Omega)}\qquad\text{for all}\;\mathbf{u}\in\sobo^{1}\lebe^{\varphi}(\Omega;V).
\end{align*}
\item\label{item:Orlicz2} For any $\mathscr{H}^{n-1}$-measurable set $\Gamma\subset\partial\Omega$ with $\mathscr{H}^{n-1}(\Gamma)>0$, we have 
\begin{align*}
\|\mathbf{u}\|_{\sobo^{1}\lebe^{\varphi}(\Omega)} \lesssim \|\mathrm{tr}_{\partial\Omega}(\mathbf{u})\|_{\lebe^{1}(\Gamma)} + \|\mathbb{A}\mathbf{u}\|_{\lebe^{\widetilde{\varphi}}(\Omega)}\qquad\text{for all}\;\mathbf{u}\in\sobo^{1}\lebe^{\varphi}(\Omega;V).
\end{align*}
\end{enumerate}

\end{proposition}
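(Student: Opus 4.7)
The plan is to apply Theorem~\ref{thm:korn-gen} with the choices $X(\Omega)=\lebe^{\widetilde{\varphi}}(\Omega)$ and $Y(\Omega)=\lebe^{\varphi}(\Omega)$, and $k=1$. The first step is to verify the two hypotheses \eqref{eq:kornetto0}. Inequality $\eqref{eq:kornetto0}_{1}$ is exactly the Orlicz Korn inequality~\eqref{eq:DerKorner}, which holds on Lipschitz domains for $\mathbb{C}$-elliptic $\mathbb{A}$ under the $\Delta_2$ and balance hypotheses \eqref{eq:Orliczbalance}; this follows by combining Cianchi's sharp singular-integral estimates in Orlicz spaces \cite{Cianchi,Cianchi1} with the $\sobo^{\mathbb{A}}\lebe^{\widetilde{\varphi}}$-extension property from~\cite{GmeinederRaita}. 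The Poincar\'{e}--Korn inequality $\eqref{eq:kornetto0}_2$, namely
\[
\inf_{\bm{\rho}\in\ker(\mathbb{A};\R^{n})}\|\mathbf{u}-\bm{\rho}\|_{\lebe^{\widetilde{\varphi}}(\Omega)}\lesssim \|\mathbb{A}\mathbf{u}\|_{\lebe^{\widetilde{\varphi}}(\Omega)},
\]
is likewise available in the Orlicz setting on Lipschitz (more generally, John) domains; see, e.g., \cite[Thm.~6.9, Cor.~6.14]{DieningGmeineder}.

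Next I would check that in each of the cases \emph{(a)} and \emph{(b)}, the respective $\vertiii{\cdot}$ defines a seminorm on $\sobo^{1}\lebe^{\varphi}(\Omega;V)$. For \emph{(a)}, since $\Omega$ is bounded, \ref{itm:BNF-5} yields $\lebe^{\varphi}(\Omega)\hookrightarrow\lebe^{1}(\Omega)$, so $\|\mathbf{u}\|_{\lebe^{1}(\omega)}\leq \|\mathbf{u}\|_{\lebe^{1}(\Omega)}\lesssim \|\mathbf{u}\|_{\lebe^{\varphi}(\Omega)}$ is finite and sublinear. For \emph{(b)}, the same embedding gives $\sobo^{1}\lebe^{\varphi}(\Omega;V)\hookrightarrow \sobo^{1,1}(\Omega;V)$, whence $\mathrm{tr}_{\partial\Omega}(\mathbf{u})\in\lebe^{1}(\partial\Omega;V)$ is well defined and $\vertiii{\mathbf{u}}=\|\mathrm{tr}_{\partial\Omega}(\mathbf{u})\|_{\lebe^{1}(\Gamma)}$ is a seminorm.

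The last and decisive step is to verify that $\vertiii{\cdot}$ is actually a \emph{norm} on $\ker(\mathbb{A};\R^{n})|_{\Omega}$; Theorem~\ref{thm:korn-gen}~\ref{item:Korn1B} then yields \ref{item:Orlicz1} and \ref{item:Orlicz2}. For \ref{item:Orlicz1}, Lemma~\ref{lem:Celliptic} identifies $\ker(\mathbb{A};\R^{n})$ with a finite-dimensional space of $V$-valued polynomials; since any polynomial vanishing on a set of positive $\mathscr{L}^{n}$-measure vanishes identically, $\|\cdot\|_{\lebe^{1}(\omega)}$ is a norm on the kernel, exactly as in the proof of Proposition~\ref{prop:bulk}. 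For \ref{item:Orlicz2}, connectedness of $\Omega$ and the assumption $\mathscr{H}^{n-1}(\Gamma)>0$ put us in the situation of Pompe's rigidity argument invoked in the proof of Proposition~\ref{prop:fullbdrytrace}: any $\bm{\rho}\in\ker(\mathbb{A};\R^{n})$ with $\mathrm{tr}_{\partial\Omega}(\bm{\rho})\equiv 0$ on $\Gamma$ must vanish, so $\|\mathrm{tr}_{\partial\Omega}(\cdot)\|_{\lebe^{1}(\Gamma)}$ is a norm on the kernel.

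The only substantive obstacle is the first step: ensuring that the Orlicz Korn inequality $\eqref{eq:kornetto0}_1$ and the Orlicz Poincar\'{e}--Korn inequality $\eqref{eq:kornetto0}_2$ are available in the exact generality we need (first-order $\mathbb{C}$-elliptic $\mathbb{A}$, Lipschitz $\Omega$, $\Delta_2$ with balance~\eqref{eq:Orliczbalance}). Once this is cited from \cite{Cianchi,Cianchi1,DieningGmeineder,GmeinederRaita}, the remainder of the proof is a direct application of Theorem~\ref{thm:korn-gen} together with the two purely algebraic/geometric identifiability statements above.
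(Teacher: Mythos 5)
Your proposal is correct and follows essentially the same route as the paper: apply Theorem~\ref{thm:korn-gen} with $X=\lebe^{\widetilde{\varphi}}$, $Y=\lebe^{\varphi}$, invoke \eqref{eq:DerKorner} for $\eqref{eq:kornetto0}_1$ and \cite{DieningGmeineder} for $\eqref{eq:kornetto0}_2$, and then reduce the norm-on-kernel verification to the arguments of Propositions~\ref{prop:bulk} and~\ref{prop:fullbdrytrace}. Your additional checks (well-definedness of the seminorms, explicit embedding $\lebe^{\varphi}\hookrightarrow\lebe^1$) are details the paper leaves implicit but do not change the argument.
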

\begin{proof}
We directly employ Theorem \ref{thm:korn-gen}; the underlying Poincar\'{e} inequality from $\eqref{eq:kornetto0}_{2}$  follows from \cite[Sec.~3]{DieningGmeineder}. In light of \eqref{eq:DerKorner}, we only have to verify that the respective seminorms are norms on the nullspace of $\mathbb{A}$, but this is the case by the proofs of Propositions~\ref{prop:bulk} and \ref{prop:fullbdrytrace}. 
This completes the proof. 
\end{proof}
With the obvious modifications, a similar result can also be obtained for partial traces. Note that, by the very same method of proof, it is also possible to establish Korn-type inequalities, e.g., for Lorentz spaces or Lebesgue spaces weighted with suitable $A_{p}$-Muckenhoupt weights. Indeed, this primarily requires Korn's first inequality for these scales, in turn to be found, e.g., in~\cite{DieningGmeineder,DieningRuzickaSchumacher}.

\section{Interior and lower dimensional trace conditions}\label{sec:choices}
In this section, we focus on specific instances of Korn-type inequalities where the seminorms $\vertiii{\cdot}$ are given by integrals of $\mu$- and so potentially lower dimensional traces of Sobolev functions. 
\subsection{$\mu$-traces}\label{sec:mutraces}
For the reader's convenience, we briefly recall restriction or trace theorems for $\sobo^{k,p}(\R^{n})$, for  $1<p<\infty$. Denoting by $\mathcal{G}_{k}\coloneqq\mathscr{F}^{-1}((1+|\xi|^{2})^{-\frac{k}{2}})$ the $k$-th order Bessel potential, we note that every $u\in\sobo^{k,p}(\R^{n})$ can be written as $u=\mathcal{G}_{k}*f$ with some $f\in\lebe^{p}(\R^{n})$. This is the Bessel potential characterisation of $\sobo^{k,p}(\R^{n})$, see \cite[Thm.~1.2.3]{AdamsHedberg}, and we have the norm equivalence 
\begin{align}\label{eq:normequivBesselSobolev}
\|u\|_{\sobo^{k,p}(\R^{n})}\eqsim \inf\{\|f\|_{\lebe^{p}(\R^{n})}\colon\;u=\mathcal{G}_{k}*f,\;f\in\lebe^{p}(\R^{n})\},\qquad u\in\sobo^{k,p}(\R^{n}). 
\end{align}
The $(k,p)$-\emph{Sobolev capacity} of a compact set $K\subset\R^{n}$ is given by 
\begin{align*}
\mathrm{Cap}_{k,p}(K)\coloneqq \inf\{\|\varphi\|_{\sobo^{k,p}(\R^{n})}^{p}\colon\;\varphi\in\hold_{c}^{\infty}(\R^{n})\;\text{and}\;\varphi\geq 1\;\text{on}\;K\}. 
\end{align*}
For an open set $U\subset\R^{n}$, this definition is extended via 
\begin{align*}
\mathrm{Cap}_{k,p}(U)\coloneqq \sup\{\mathrm{Cap}_{k,p}(K)\colon\;K\subset U\;\text{is compact}\}. 
\end{align*}
For general sets $V\subset \R^{n}$, one then defines an outer capacity by
\begin{align*}
\mathrm{Cap}_{k,p}(V)\coloneqq \inf\{\mathrm{Cap}_{k,p}(U)\colon\;V\subset U,\;U\;\text{is open}\}. 
\end{align*}
Now let $\mu\in\mathcal{M}^{+}(\R^{n})$. In the following, we assume that $1<p<q<\infty$,  and that there exists a constant $c>0$ such that 
\begin{align}\label{eq:capcondo1}
\mu(K)^{\frac{1}{q}}\leq c\,\mathrm{Cap}_{k,p}(K)^{\frac{1}{p}}\qquad \text{for all compact sets}\;K\subset\R^{n}.
\end{align}
By Adams'  trace inequality (see, e.g., \cite[Thm.~7.2.1]{AdamsHedberg}), \eqref{eq:capcondo1} implies that the $k$-th order Bessel potential operator $G_{k}\colon f \mapsto \mathcal{G}_{k}*f$ maps $G_{k}\colon\lebe^{p}(\R^{n};\mathscr{L}^{n})\to\lebe^{q}(\R^{n};\mu)$ boundedly. In particular, by \eqref{eq:normequivBesselSobolev}, there exists $c>0$ such that $\|u\|_{\lebe^{q}(\R^{n};\mu)}\leq c\|u\|_{\sobo^{k,p}(\R^{n})}$ holds for all $u\in\hold_{c}^{\infty}(\R^{n})$. 

Now, if $u\in\sobo^{k,p}(\R^{n})$, we choose $(u_{j})\subset\hold_{c}^{\infty}(\R^{n})$ such that $u_{j}\to u$ strongly in $\sobo^{k,p}(\R^{n})$. Since $(u_{j})$ is Cauchy with respect to $\|\cdot\|_{\sobo^{k,p}(\R^{n})}$, the preceding inequality implies that $(u_{j})$ is Cauchy in $\lebe^{q}(\R^{n};\mu)$. By the Banach space property of the latter, there exists an element $\mathrm{tr}_{\mu}(u)\in\lebe^{q}(\R^{n};\mu)$ such that $\|u_{j}-\mathrm{tr}_{\mu}(u)\|_{\lebe^{q}(\R^{n};\mu)}\to 0$. Moreover, $\mathrm{tr}_{\mu}(u)$ is independent of the particular approximating sequence, and hence gives rise to a well-defined, bounded linear \emph{$\mu$-trace operator} $
\mathrm{tr}_{\mu}\colon\sobo^{k,p}(\R^{n})\to\lebe^{q}(\R^{n};\mu)$. As examples, if 
\begin{itemize}
\item $\mu=\mathscr{H}^{n-1}\mres\Sigma$ with some sufficiently smooth hypersurface $\Sigma$, or  
\item $\mu=\mathscr{H}^{n-s}\mres\Sigma$ for some $0<s<n$ and a suitably regular $(n-s)$-dimensional set  (such as lines if $s=n-1$ or certain fractals if $0<s<n$), 
\end{itemize}
then $\mathrm{tr}_{\mu}(u)$ can be understood as a (generalised) restriction of $u$ to $\Sigma$. 
\subsection{Full gradient inequalities}
Even though setting $\mathbb{A}=\nabla$ does not give rise to a proper Korn-type inequality, we begin with the following base case of independent interest. 
\begin{example}[Gradient, $\mu$-traces]\label{ex:gradient} 
Let $1<p\leq q<\infty$, and let $\Omega\subset\R^{n}$ be an open, bounded and connected domain such that $\sobo^{1,p}(\Omega)\hookrightarrow\hookrightarrow\lebe^{p}(\Omega)$; e.g., $\Omega$ might be an extension domain for $\sobo^{1,p}$ or a John domain. Moreover, let $\mu\in\mathcal{M}^{+}(\R^{n})$ be compactly supported in $\Omega$ such that \eqref{eq:capcondo1} holds and  $\mu(K_{0})> 0$ for some compact subset $K_{0}\subset\Omega$. Then we have 
\begin{align}\label{eq:traceineq1}
\|u\|_{\sobo^{1,p}(\Omega)} \lesssim \Big(\int_{\Omega}|\mathrm{tr}_{\mu}(u)|\dif\mu + \|\nabla u\|_{\lebe^{p}(\Omega)}\Big)\qquad\text{for all}\;u\in\sobo^{1,p}(\Omega). 
\end{align}
In the framework of Theorem \ref{thm:variant}, we put $X=\lebe^{p}$, $k=1$, $\mathbb{A}=\nabla$ and 
\begin{align}\label{eq:chiefseminormSection4}
\vertiii{u}\coloneqq\int_{\Omega}|\mathrm{tr}_{\mu}(u)|\dif\mu,\qquad u\in\sobo^{1,p}(\Omega). 
\end{align}
By the connectedness of $\Omega$, any $\rho\in\ker(\nabla;\Omega)$ is constant and so $u\equiv d$ for some $d\in\R$. Hence, 
\begin{align*}
\vertiii{\rho}=0 \;\; \Longrightarrow \;\; 
|d|\mu(K_{0}) \leq  |d|\mu(\Omega) = \int_{\Omega}|\mathrm{tr}_{\mu}(\rho)|\dif\mu \stackrel{!}{=} 0\;\; \stackrel{\mu(K_{0})>0}{\Longrightarrow}\;\; u\equiv 0.
\end{align*}
In conclusion, \eqref{eq:traceineq1} follows from Theorem \ref{thm:variant}, see \eqref{eq:improvedKorn-v}. Note that, if $p>n$ and $\Omega$ has Lipschitz boundary, the sole restriction on $\mu$ (besides being compactly supported inside $\Omega$) is that $\mu(K_{0})>0$ for some compact subset $K_{0}\subset\Omega$. This is due to the fact that $\sobo^{1,p}(\Omega)\hookrightarrow\hold(\overline{\Omega})$ in this case. 
\end{example}

\subsection{Korn-type inequalities involving $\mu$-traces} 
In analogy to Section \ref{sec:choices0}, we now consider proper Korn-type inequalities with $\mu$-trace terms on the right-hand side. For future reference, we single out the following elementary lemma; as usual, $\dim_{\mathcal{H}}$ denotes the Hausdorff dimension (see, e.g., \cite[Chpt.~2]{EvansGariepy}). 
\begin{lemma}\label{lem:vanishcap}
Let $\Omega\subset\R^{n}$ be open and bounded, and let $\mathbf{f}\colon\Omega\to\R^{n}$ be a continuous function.
\begin{enumerate}
\item\label{item:vanish1A} Suppose that, in addition,    $\mathbf{f}\colon\Omega\to\R^{n}$ is injective, and let $\mu\in\mathcal{M}^{+}(\Omega)$ be non-trivial and non-atomic. Then we have 
\begin{align*}
\int_{\Omega}|\mathbf{f}|\dif\mu = 0 \;\; \Longrightarrow \;\; \mathbf{f}\equiv 0. 
\end{align*}
\item\label{item:vanish2A} Suppose that $\mathbf{f}$  vanishes at most on a set of the form $\Omega\cap\mathfrak{S}$, where $\mathfrak{S}\subset\R^{n}$ is a closed set with $\dim_{\mathcal{H}}(\mathfrak{S})\leq \lambda\in[0,n]$.   Moreover, let $\mu\in\mathcal{M}_{c}^{+}(\Omega)$ be a  non-trivial measure such that 
\begin{align}\label{eq:tscherpelcondo}
\dim_{\mathcal{H}}(U)\leq\lambda \;\;\Longrightarrow  \;\;\mu(U)=0\qquad\text{for all Borel sets $U\subset\Omega$}. 
\end{align}
Then we have that 
\begin{align*}
\int_{\Omega}|\mathbf{f}|\dif\mu = 0 \;\; \Longrightarrow \;\;\mathbf{f}\equiv 0. 
\end{align*}
\end{enumerate}
\end{lemma}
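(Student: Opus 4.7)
The plan is to prove both parts by the same elementary measure-theoretic mechanism: assuming $\int_\Omega |\mathbf{f}|\,\dif\mu = 0$, I note that $\mathbf{f} = 0$ must hold $\mu$-almost everywhere, so that $\mu$ is concentrated on the zero set $Z \coloneqq \mathbf{f}^{-1}(\{0\}) \subset \Omega$. By continuity of $\mathbf{f}$ this set is relatively closed in $\Omega$, hence Borel, so that $\mu(\Omega \setminus Z) = 0$. The strategy is then to use the structural hypothesis on $\mathbf{f}$ in each of the two parts to show that $Z$ is so small that the non-triviality of $\mu$ is contradicted, forcing the antecedent of the implication to be incompatible with the remaining hypotheses.

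For part \ref{item:vanish1A}, injectivity of $\mathbf{f}$ implies that $Z = \mathbf{f}^{-1}(\{0\})$ is either empty or a singleton; non-atomicity of $\mu$ then yields $\mu(Z) = 0$. Combining this with $\mu(\Omega \setminus Z) = 0$ produces $\mu(\Omega) = 0$, which contradicts the non-triviality of $\mu$. Hence the conjunction of $\int_\Omega |\mathbf{f}|\,\dif\mu = 0$ with the standing hypotheses is impossible, and the implication holds in the desired (vacuous) sense.

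For part \ref{item:vanish2A}, the hypothesis on the vanishing locus of $\mathbf{f}$ gives $Z \subset \Omega \cap \mathfrak{S}$, so by monotonicity of the Hausdorff dimension one has $\dim_{\mathcal{H}}(Z) \leq \dim_{\mathcal{H}}(\mathfrak{S}) \leq \lambda$. The capacitary-type condition \eqref{eq:tscherpelcondo}, applied to the Borel set $Z$, then forces $\mu(Z) = 0$, and the same contradiction with non-triviality of $\mu$ ensues exactly as in part \ref{item:vanish1A}.

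There is essentially no obstacle in either part: the entire proof rests on the standard fact that a non-negative Borel function has vanishing $\mu$-integral precisely when its support is $\mu$-null, combined with the smallness of the zero set encoded in the hypotheses (an injectivity bound in (a), a Hausdorff-dimension bound in (b)). The only minor points to mention are the Borel measurability of $Z$, which is immediate from continuity of $\mathbf{f}$, and the monotonicity of $\dim_{\mathcal{H}}$, both of which are standard and require no additional work.
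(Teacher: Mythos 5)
Your proof is correct and takes a more streamlined route than the paper's own argument. You invoke at once the standard fact that a non-negative measurable function with vanishing $\mu$-integral must vanish $\mu$-a.e., so that $\mu$ is concentrated on the relatively closed (hence Borel) zero set $Z=\mathbf{f}^{-1}(\{0\})$; the smallness of $Z$ (a point or empty in part~\ref{item:vanish1A}, of Hausdorff dimension at most $\lambda$ in part~\ref{item:vanish2A}) together with the hypotheses on $\mu$ then gives $\mu(Z)=0$, whence $\mu(\Omega)=\mu(Z)+\mu(\Omega\setminus Z)=0$, contradicting non-triviality. The paper, by contrast, re-derives the ``$\int g\,\dif\mu=0\Rightarrow g=0$ $\mu$-a.e.'' mechanism by hand: it fixes an arbitrary compact $K\subset\Omega$ and a parameter $r>0$, uses continuity of $\mathbf{f}$ to obtain a uniform positive lower bound for $|\mathbf{f}|$ on the compact set $K\setminus\mathscr{U}_r(\Omega\cap\mathfrak{S})$, concludes $\mu(K\setminus\mathscr{U}_r(\Omega\cap\mathfrak{S}))=0$, sends $r\searrow0$ by dominated convergence (using $\mu(\Omega\cap\mathfrak{S})=0$, which is the same smallness input as in your argument), and finally exhausts $\Omega$ by compacta. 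Both proofs thus rest on the same decomposition of $\mu(\Omega)$ across $Z$ and its complement; you gain brevity by invoking the textbook measure-theory lemma where the paper chooses to rebuild it. Your explicit remark that, under the injectivity hypothesis of part~\ref{item:vanish1A}, the conclusion $\mathbf{f}\equiv0$ is itself impossible for a nonempty open $\Omega$ -- so the implication holds because its antecedent can never be satisfied -- is accurate and matches how the lemma is used downstream in Examples~\ref{ex:devgradmu} and~\ref{ex:mutracesymgrad1} (namely, to rule out non-trivial kernel elements with vanishing $\mu$-trace).
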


\begin{proof}
In proving \ref{item:vanish1A} and \ref{item:vanish2A}, we may assume that $\mathbf{f}$ has at least one zero in $\Omega$. Indeed, if not, then $\mu(\Omega)>0$ trivially implies that 
\begin{align*}
\int_{\Omega}|\mathbf{f}|\dif\mu>0, 
\end{align*}
and so there is nothing to prove. We denote by $N$ the set of all zeros of $\mathbf{f}$ in $\Omega$. In the setting of \ref{item:vanish1A}, $N=\{x_{0}\}$ for some suitable $x_{0}\in\Omega$, and we then put $\mathfrak{S}\coloneqq N$. Hence, in both \ref{item:vanish1A} and \ref{item:vanish2A}, $\mathfrak{S}$ is closed. Now suppose that  $\mathbf{f}\not\equiv 0$. Let $K\subset\Omega$ be an arbitrary compact subset and define, for $r>0$, 
\begin{align*}
\mathscr{U}_{r}(\Omega\cap\mathfrak{S})\coloneqq \{x\in\R^{n}\colon\;\mathrm{dist}(x,\Omega\cap\mathfrak{S})<r\},
\end{align*}
which is open. Hence, for each $r>0$, $K\setminus\mathscr{U}_{r}(\Omega\cap\mathfrak{S})$ is compact. By continuity of $\mathbf{f}$, there exists $\varepsilon_{K,r}>0$ such that $|\mathbf{f}|\geq\varepsilon_{K,r}$ on 
 on $K\setminus\mathscr{U}_{r}(\Omega\cap\mathfrak{S})$. Hence, 
 \begin{align*}
\int_{\Omega}|\mathbf{f}|\dif\mu = 0 \;\;\Longrightarrow \;\;\varepsilon_{K,r}\mu(K\setminus\mathscr{U}_{r}(\Omega\cap\mathfrak{S}))\leq \int_{\Omega}|\mathbf{f}|\dif\mu = 0 \;\;\Longrightarrow \;\;\mu(K\setminus\mathscr{U}_{r}(\Omega\cap\mathfrak{S})) = 0.
 \end{align*}
 Next, note that $\mathbbm{1}_{\mathscr{U}_{r}(\Omega\cap\mathfrak{S})}\to 0$ $\mu$-a.e.~in $\Omega$. Indeed, let $x\in\Omega\setminus\mathfrak{S}$. Then $\mathrm{dist}(x,\mathfrak{S})>0$ by the closedness of $\mathfrak{S}$, and so there exists $r_{0}>0$ such that $\ball_{r_{0}}(x)\subset\Omega\setminus\mathfrak{S}$. Thus, $\mathbbm{1}_{\Omega\cap\mathscr{U}_{r}(\Omega\cap\mathfrak{S})}(x)\to 0$ as $r\searrow 0$. In each of the cases \ref{item:vanish1A} and \ref{item:vanish2A}, the underlying hypotheses ensure that $\mu(\Omega\cap\mathfrak{S})=0$.  Therefore, $\mu(\Omega\cap\mathscr{U}_{r}(\Omega\cap\mathfrak{S}))\to$ 0 as $r\searrow 0$ by dominated convergence. In conclusion, 
 \begin{align*}
    \mu(K) \leq  \lim_{r\searrow 0}(\mu(K\setminus\mathscr{U}_{r}(\Omega\cap\mathfrak{S})) + \mu(\Omega\cap\mathscr{U}_{r}(\Omega\cap\mathfrak{S})) = 0, 
 \end{align*}
 and so $\mu(K)=0$ for any compact subset $K\subset\Omega$. We then choose a sequence $(K_{j})\subset\Omega$ of compact sets with $K_{j}\nearrow\Omega$.  Dominated convergence yields 
\begin{align*}
\mu(\Omega)=\lim_{j\to\infty}\mu(K_{j})=0. 
\end{align*}
However, since $\mu\in\mathcal{M}^{+}(\Omega)$, we have $\mu(\Omega)>0$, and thus we arrive at a contradiction. Hence, $\mathbf{f}\equiv 0$, and the proof is complete.  
 \end{proof}
Based on the previous lemma, we now turn to specific differential operators.
\begin{example}[Deviatoric gradient]\label{ex:devgradmu} Let $\Omega\subset\R^{n}$ be open, bounded and connected with Lipschitz boundary. For given $1<p<q<\infty$, let $\mu\in\mathcal{M}_{c}^{+}(\Omega)$ be non-trivial and non-atomic such that its trivial extension $\overline{\mu}$ to $\R^{n}$ satisfies \eqref{eq:capcondo1} with $k=1$. By Example \ref{ex:diffops}\ref{item:devgrad}, each element of $\ker(\nabla^{D};\Omega)$ is of the form $\bm{\rho}(x)=\alpha x+b$ for some $\alpha \in\R$ and $b\in\R^{n}$. Now let $\bm{\rho}(x)= \alpha x+b$ be such that $ \alpha\neq 0$, whereby  $\bm{\rho}$ is injective and continuous. Considering the seminorm $\vertiii{\cdot}$ as in \eqref{eq:chiefseminormSection4}, we use Lemma \ref{lem:vanishcap}\ref{item:vanish1A} to conclude $\bm{\rho}\equiv 0$, provided that $\vertiii{\bm{\rho}}=0$. 
If $ \alpha =0$ and so $\bm{\rho}\equiv b$, then $\vertiii{\bm{\rho}}=0$ directly yields $\bm{\rho}\equiv 0$ by virtue of $\mu(\Omega)>0$. 
In conclusion, $\vertiii{\cdot}$ is a norm on $\ker(\nabla^{D};\Omega)$. Since $\nabla^{D}$ is $\mathbb{C}$-elliptic by Example \ref{ex:diffops}, Korn's first inequality \eqref{eq:KornJohn} and so Theorem \ref{thm:variant} directly yield    
\begin{align}\label{eq:devgradmutrace}
    \|\mathbf{u}\|_{\sobo^{1,p}(\Omega)}\lesssim \int_{\Omega}|\mathrm{tr}_{\mu}(\mathbf{u})|\dif\mu + \|\nabla^{D}\mathbf{u}\|_{\lebe^{p}(\Omega)}\qquad\text{for all}\;\mathbf{u}\in\sobo^{1,p}(\Omega;\R^{n}). 
\end{align}
Note that, if 
\begin{itemize}
\item $1<p<n$, then \eqref{eq:capcondo1} already implies that $\mu$ is non-atomic (recall that $n\geq 2$). Hence, \eqref{eq:devgradmutrace} then holds under the same conditions as required in Example \ref{ex:gradient}.
\item If $p>n$, then $\mu$ is still allowed to be atomic in the inequality \eqref{eq:traceineq1} from Example \ref{ex:gradient},  whereas this is forbidden for the inequality \eqref{eq:devgradmutrace} that involves  $\nabla^{D}$.
\end{itemize}
Moreover, we may replace the trace integral in \eqref{eq:devgradmutrace} by $\|\mathrm{tr}_{\mu}(\mathbf{u})\|_{\lebe^{q}(\Omega;\mu)}$; by our discussion in Section \ref{sec:mutraces}, the resulting inequality is still  meaningful. However, note that \eqref{eq:devgradmutrace} is stronger.
\end{example}
\begin{example}[Symmetric  gradient, $n=2$]\label{ex:mutracesymgrad1} We let $n=2$ and assume that $\Omega$ and $\mu\in\mathcal{M}_{c}^{+}(\Omega)$ satisfy the same assumptions as in the preceding example for some $1<p<q<\infty$. By Example \ref{ex:diffops}\ref{item:symgrad}, any element of $\ker(\sg;\Omega)$ is of the form $\bm{\rho}(x)=Ax+b$ with some $A\in\R_{\mathrm{skew}}^{2\times 2}$ and $b\in\R^{2}$. With the seminorm $\vertiii{\cdot}$ as in \eqref{eq:chiefseminormSection4}, suppose that $\bm{\rho}$ satisfies $\vertiii{\bm{\rho}}=0$. If $A\neq 0$, then $\bm{\rho}$ is injective; note that this conclusion precisely holds in $n=2$ dimensions. Again employing Lemma \ref{lem:vanishcap}\ref{item:vanish1A}, we conclude that $\bm{\rho}\equiv 0$. If $A=0$, then $\vertiii{\bm{\rho}}=0$ immediately implies $\bm{\rho}=0$. In particular, we have 
\begin{align}\label{eq:symgradmutraces2d}
\|\mathbf{u}\|_{\sobo^{1,p}(\Omega)}\lesssim \int_{\Omega}|\mathrm{tr}_{\mu}(\mathbf{u})|\dif\mu + \|\sg(\mathbf{u})\|_{\lebe^{p}(\Omega)}\qquad\text{for all}\;\mathbf{u}\in\sobo^{1,p}(\Omega;\R^{2}).
\end{align}
The discussion at the end of Example \ref{ex:devgradmu} (with $n=2$) also applies to the present situation.
\end{example}
For the symmetric gradient, the case $n=2$ is special. Namely, if $n\geq 3$, elements of $\ker(\varepsilon;\R^{n})$ are not injective in general if $n\in 2\mathbb{N}+2$, and are never injective if $n\in 2\mathbb{N}+1$: If $A\in\R_{\mathrm{skew}}^{n\times n}$ and $n\in2\mathbb{N}+1$, then we have 
\begin{align*}
\det(A)=\det(A^{\top})=\det(-A)=(-1)^{n}\det(A) \stackrel{n\in2\mathbb{N}+1}{=} -\det(A)\Longrightarrow  \det(A)=0. 
\end{align*}
\begin{example}[Symmetric gradient, $n= 3$]\label{ex:symgradn=3}
We let $n=3$, but note that the following also applies to the higher dimensional case with the obvious modifications. 
If $n=3$, then we have $\dim(\ker(A))\in\{1,3\}$ for any $A\in\R_{\mathrm{skew}}^{3\times 3}$. In what follows, we let $\bm{\rho}(x)=Ax+{b}$ with $A\in\R_{\mathrm{skew}}^{n\times n}$ and ${b}\in\R^{n}$. We consider the condition 
\begin{align*}
\vertiii{\bm{\rho}}\coloneqq \int_{\Omega}|\bm{\rho}|\dif\mu \stackrel{!}{=} 0. 
\end{align*}
Now, if $\mu=\mathscr{H}^{1}\mres\ell$ for some line $\ell$, then $\vertiii{\cdot}$ is not a norm on the nullspace; this is the case if $\dim(\ker(A))=1$. Similarly, if $0\leq s<1$ and $\mu=\mathscr{H}^{s}\mres U$ with some $s$-dimensional set $U$ contained in a line $\ell$, then $S_{1}$ is not a norm. In consequence, a Korn inequality with $\mu$-trace terms fails in this case.  

Conversely, if $\ell_{1}$ and $\ell_{2}$ are two non-collinear lines and $\mu=\mathscr{H}^{1}\mres (\ell_{1}\cup\ell_{2})$, then $\vertiii{\cdot}$ is a norm on the nullspace of $\varepsilon$ indeed. For instance, if $p>3$, then one has 
\begin{align*}
\|\mathbf{u}\|_{\sobo^{1,p}(\Omega)}\lesssim \|\mathrm{tr}_{\mu}(\mathbf{u})\|_{\lebe^{1}(\Omega)} + \|\sg(\mathbf{u})\|_{\lebe^{p}(\Omega)}\qquad\text{for all}\;\mathbf{u}\in\sobo^{1,p}(\Omega;\R^{3})
\end{align*}
if $\Omega\subset\R^{3}$ is open and bounded with Lipschitz boundary; note that, in this case, $\sobo^{1,p}(\Omega;\R^{3})\hookrightarrow\hold(\overline{\Omega};\R^{3})$, whereby the trace integral is well-defined indeed. Other combinations of integrabilities with respect to $p$ and $q$ can be obtained as in the previous examples. 
\end{example}

\begin{example}[Deviatoric symmetric gradient, $n\geq 3$]\label{ex:devgrad}
As discussed in Example \ref{ex:devsymgradcomplications}, the case of the deviatoric symmetric gradient allows for a larger variety of the underlying nullspaces; in fact, if $n=3$, the nullspace is an intersection of three algebraic varieties. This, however, might lead to curved objects, and so a treatment as in the previous examples (where the corresponding set where nullspace elements vanish were affine subspaces) is impossible. In particular, in general, there is an interplay between the operators, the integrabilities and the geometry of the supports of the underlying measures. 

\end{example}

 \section{Symbolic verification of the norm condition} \label{sec:sym-comput}

Let us recall, that the validity of condition~\ref{item:Korn1B}~\ref{item:Korn1}  in Theorem~\ref{thm:korn-gen}, namely that $\vertiii{\cdot}$ is a norm on $\ker(\mathbb{A};\mathbb{R}^n)|_{\Omega}$ is strongly linked to the geometry of $\partial \Omega$. 
In some cases it is known, for which domains this is violated, see Proposition ~\ref{prop:devgradpartial} and~Example \ref{prop:partialtracesymgrad}, but in others it is far from clear. 

Here we present a way of testing for some special cases, whether this condition is satisfied, by means of symbolic calculations. 
We assume that $\Omega \subset \mathbb{R}^n$ is a open bounded connected set, and that its boundary $\partial \Omega$ is represented by a parametrisation $r \colon \Theta \to \mathbb{R}_{>0}$ for $\Theta = [0,\pi] \times [0,2\pi)^{n-2}$ in spherical coordinates $(\theta, r)$.  
Furthermore, we consider the special case $V = \mathbb{R}^n$, $\Gamma = \partial \Omega$, $T(\bu) = \operatorname{tr}(\bu) \cdot \nu_{\partial \Omega}$ and for $n \in \{2,3\}$. 
Furthermore, for simplicity we assume that  we know $K \in \mathbb{N}$ such that $\ker(\mathbb{A};\mathbb{R}^n) \subset \mathcal{P}_{K}(\mathbb{R}^n;\mathbb{R}^n)$. In this case we have 
\begin{align}
\label{eq:kerneleq}
S_{\mathbb{A},K} \coloneqq \ker(\mathbb{A};\mathbb{R}^n) \cap \mathcal{P}_K(\mathbb{R}^n;\mathbb{R}^n) = \ker(\mathbb{A};\mathbb{R}^n). 
\end{align}
 Note that $K\in\mathbb{N}$ such that $\ker(\mathbb{A};\mathbb{R}^n)\subset \mathcal{P}_K(\mathbb{R}^n;\mathbb{R}^n)$ is known in many cases; see Example~\ref{ex:diffops}. On an abstract level, $K\in\mathbb{N}$ can be obtained by use of the Buchberger algorithm from computational algebraic geometry \cite{Buchberger}; this is due to the fact that $K$ stems from the Hilbert Nullstellensatz \cite{Kalamajska,Smith}; indeed, it is the $\mathbb{C}$-ellipticity that gives access to such techniques. In practice, however, $K$ is often known, which is why we assume it to be given. 

The pseudocode in Algorithm~\ref{alg:sym} describes our approach in more detail: it consists of determining the kernel $\ker(\mathbb{A};\mathbb{R}^n)$ symbolically, and then testing whether or not, there are non-trivial kernel elements, which satisfy $T(\bu) = 0$ on $\Gamma = \partial \Omega$. For the tests we choose  
\begin{align}\label{def:norm-Theta}      
    \vertiii{\bu}_{\overline \Theta} \coloneqq \sum_{\theta \in \overline{\Theta}} |T \bu (x(\theta))|
\end{align}
for some given discrete set $\overline{\Theta}\subset \Theta$. 
This is useful for the following reason:  if $\bu \mapsto \vertiii{T(\bu)}_{\overline \Theta}$ is a norm on $\ker(\mathbb{A};\mathbb{R}^n) \subset C(\mathbb{R}^n;\mathbb{R}^n)$, then so is, e.g., 
$\vertiii{\bu} \coloneqq \norm{T(\bu)}_{L^p(\Gamma)}$. 
\smallskip

\begin{algorithm}\label{alg:sym}
\caption{Symbolic test for norm on $\operatorname{ker}(\mathbb{A};\mathbb{R}^n) \cap \mathcal{P}_K$ }
\begin{algorithmic}[1]
\Require $K \in \mathbb{N}$ polynomial degree, $n \in  \{2,3\}$,  
tensor $\texttt{A} \in \mathbb{R}^{n \times n \times n \times n}$ such that
\[
(\mathbb{A} u)_{i,j}(x) = \sum_{k,l = 1}^n \texttt{A}_{i,j,k,l}\partial_{x_l} u_k(x) 
\]
discrete $\overline \Theta \subset  \Theta \coloneqq  [0,\pi] \times [0,2\pi)^{n-2} $, 
$r \colon \Theta \to \mathbb{R}_{>0}$ parametrisation of $\Gamma$ in spherical coordinates

\Ensure Decide whether $\norm{\bu}_{\overline \Theta}$ as in \eqref{def:norm-Theta} defines a norm on 
 $   S_{\mathbb{A},K} \coloneqq  \mathcal{P}_K(\mathbb{R}^n;\mathbb{R}^n) \cap \ker(\mathbb{A};\mathbb{R}^n)$ and return $\bm{\rho} \in S_{\mathbb{A},K}\setminus \{0\}$ such that $\bm{\rho} \cdot \nu |_{\Gamma} = 0$, if it exists.  
\vspace{1em}

\State Symbolically compute $\mathbb{A} \bm{\rho}_{\alpha}(x)$ 
for a general polynomial $\bm{\rho}_{\alpha} \in \mathcal{P}_K(\mathbb{R}^n;\mathbb{R}^n)$ 
\Statex with variables $x \in \mathbb{R}^n$ and coefficients $\alpha = \alpha(\bm{\rho}) \in \mathbb{R}^{m}$.
\Comment{determine  $\mathbb{A}p$}
\vspace{0.7em}

\State  Symbolically determine  $S_{\mathbb{A},K}$ via 
  $  S_{\mathbb{A},K} \coloneqq \{ \bm{\rho} \in \mathcal{P}_K(\mathbb{R}^n;\mathbb{R}^n) \colon
     \mathbb{A} \bm{\rho}(y) = 0 \; \forall y \in Y\}$ 
\Statex for suitably chosen discrete set $Y \subset \mathbb{R}^n$. 
\Comment{determine  $K_{\mathbb{A},r} $}
\vspace{0.7em}

    \State For $x \in \Gamma$ symbolically represent $x = x(\theta)$ and $\nu = \nu(x(\theta))$ for $\theta \in \Theta$. 
    \Comment{boundary}
\vspace{0.7em}

\State For general $\bm{\rho}_{\alpha} \in S_{\mathbb{A},K}$ 
symbolically  determine      
\Comment{boundary test}
\begin{align*}
C_{\overline \Theta} \coloneqq \{\alpha  \in \mathbb{R}^m \colon \bm{\rho}_{\alpha}(x(\theta)) \cdot \nu(x(\theta)) = 0 \quad \text{for all } \theta \in \overline{\Theta} \}
\end{align*}
    \If{$ C_{\overline \Theta}= \{0\}$}  \Comment{only trivial functions}
        \State Conclusion~\ref{itm:A1}: $\vertiii{\cdot}_{\overline \Theta}$  is norm on $S_{\mathbb{A},K}$  \Comment{test positive}
        
    \Else         \Comment{non-trivial solution exists}

        \State Symbolically compute $\theta \mapsto g_{\alpha} \coloneqq \bm{\rho}_{\alpha}(x(\theta)) \cdot \nu(x(\theta))$ for general $\alpha \in C_{\overline \Theta}\setminus \{0\}$ 
        \If{$g_{\alpha} \equiv 0$ on $\Theta$}
           \State Conclusion~\ref{itm:A2}: there is  $\bm{\rho} \in S_{\mathbb{A},K} \setminus \{0\}$ with $\bm{\rho} \cdot \nu|_{\Gamma} = 0$  \Comment{test negative}
        \Else
          \State Conclusion~\ref{itm:A3}
          \Comment{test inconclusive}
        \EndIf
    \EndIf
    
\end{algorithmic}
\end{algorithm}

\noindent Possible outcomes of the Algorithm are as follows: 
\begin{enumerate}[label = (A\arabic*)]
    \item \label{itm:A1} $\vertiii{\bu}_{\overline \Theta}$ is a norm on $\ker(\mathbb{A};\mathbb{R}^n) \subset \mathcal{P}_{K}(\mathbb{R}^n;\mathbb{R}^n)$, and hence also other seminorms relying on point evaluations including the ones on $\overline{\Theta}$ are norms. 
    \item \label{itm:A2} There is a non-trivial kernel element $\bm{\rho} \in \ker(\mathbb{A};\mathbb{R}^n) \subset \mathcal{P}_{K}(\mathbb{R}^n;\mathbb{R}^n)$ such that $T(\bm{\rho}) = 0$ on $\partial \Omega$; in this case no norm $\vertiii{\cdot}$ only relying on values of $T(\bu) = \operatorname{tr}(\bu) \cdot \nu_{\partial \Omega} $ is available, and $\bm{\rho}$ serves as certificate. 
    \item \label{itm:A3} \ref{itm:A1} and \ref{itm:A2} do not hold. In this case $\vertiii{\cdot}_{\overline \Theta}$ is not a norm, but we cannot say anything about other seminorms $\vertiii{\cdot}$ possibly including more point values. In that sense the test is inconclusive and one may want to repeat it for a larger subset $\overline \Theta \subset \Theta$. 
\end{enumerate}

Matlab code~\cite{Matlab2023} for the symbolic computation will be made available together with the published version of the paper. A possible outcome of the code is as follows: 

\begin{example} 
We denote by $M(\mathbb{A};\Omega)$ the subset of the kernel $\ker(\mathbb{A};\mathbb{R}^n)$, which has zero normal trace on $\Omega$.
We have tested the code with the following instances:
\begin{itemize}
    \item For $n = 2$, $K= 1$ and $r(\theta) \equiv 1$, i.e., $\Omega_1 = B_1(0)$
    we have 
    \begin{align*}
        M(\nabla^D;B_1(0)) & =\{0\}\\
        M(\sg;B_1(0)) &= \{x \mapsto \beta ( -x_2, x_1)^\top, \beta \in \mathbb{R} \}. \end{align*}
    \item For $n = 2$,  $K= 1$  and for domain $\Omega_2$ with boundary parametrised by $r(\theta) = (2 + \sin(2\theta))$ we obtain with choosing $\overline \Theta$ sufficiently large ($\#  \overline \Theta \geq 6$)
            \begin{align*}
        M(\nabla^D;\Omega_2)  =\{0\}  =  M(\sg;\Omega_2).
        \end{align*}
  \item For $n = 3$ (for $K = 2$, $\# \overline \Theta = 4^2$) and $r(\theta) \equiv 1$, i.e., $\Omega_3 = B_1(0)$ we have 
        \begin{align*}
        M(\nabla^D;B_1(0)) & =\{0\}, 
        \\
        M(\sg;B_1(0)) & \left\{x \mapsto \beta_1 
        \begin{pmatrix} -x_3\\0\\ x_1\end{pmatrix} + \beta_2 
        \begin{pmatrix}-x_2\\x_1\\ 0 \end{pmatrix} + \beta_3 \begin{pmatrix}0\\ -x_3\\ x_2\end{pmatrix}, \beta \in \mathbb{R}^3 \right\}\\
        & = M(\sg^D;B_1(0));
        \end{align*}
        \item For $n = 3$ and for domain $\Omega_4$, with boundary parametrised by $r(\theta) = 2 + \sin(2 \theta_1) \sin(3\theta_2) $, 
          \begin{align*}
        M(\nabla^D;\Omega_4) =
        M(\sg;\Omega_4) = 
        M(\sg^D;\Omega_4) = \{0\},
        \end{align*}
        for $K = 2$ and $\# \overline \Theta = 6^2$.
\end{itemize} 
\end{example}

The code can easily be adapted to other boundary conditions, e.g., tangential boundary conditions,  boundary conditions on $\Gamma \subsetneq \partial \Omega$ and to other parametrisations of $\partial \Omega$, e.g., using cylindrical coordinates.

\section*{Acknowledgments}
F.G. acknowledges financial support through the Hector Foundation, Project-ID FP 626/21. The work by T.T. was supported by  the German Research Foundation (DFG) via grant TRR 154, subproject C09, Project Number 239904186.

\end{document}